\documentclass[final,oneeqnum,onethmnum,onefignum,onetabnum,dvips]{siamltex}
\usepackage{amsmath,amsfonts,amssymb,enumerate,graphicx,ifthen,bm,needspace}
\usepackage[hmarginratio=1:1,vmarginratio=1:1]{geometry}

\setlength{\parindent}{0cm} \setlength{\parskip}{1em}

\newcommand{\smallfigwid}{200pt}
\newcommand{\mediumfigwid}{300pt}
\newcommand{\largefigwid}{400pt}

\newcommand{\reaction}[1]{\overset{#1}{\rightarrow}}
\newcommand{\revreaction}[2]{\overset{#1}{\underset{#2}{\rightleftharpoons}}}
\newcommand{\dotprod}{\bullet}
\newcommand{\normal}{\bm{\nu}}

\newcommand{\mymbox}[1]{\quad\mbox{#1}\quad}
\newcommand{\fracslash}[2]{{#1}/{#2}}
\newcommand{\sfrac}[2]{\textstyle\frac{#1}{#2}\displaystyle}
\newcommand{\vep}{\varepsilon}
\newcommand{\st}{~:~}
\newcommand{\shalf}{\sfrac{1}{2}}

\newcommand{\rb}[1]{\left(#1\right)}
\newcommand{\sqb}[1]{\left[#1\right]}
\newcommand{\setb}[1]{\left\{#1\right\}}
\newcommand{\Cr}[2]{\ifthenelse{\equal{#2}{}}{C^{#1}}{C^{#1}\!\rb{#2}}}

\newcommand{\Rd}[1]{\mathbb{R}^{#1}}

\renewcommand{\vec}[1]{\textbf{#1}}
\newcommand{\mat}[1]{\textbf{#1}}
\renewcommand{\diag}[1]{\textsf{diag}\rb{#1}}

\newcommand{\integral}[4]{\int_{#1}^{#2} \, #3 \, d#4}
\newcommand{\deriv}[2]{\frac{d#1}{d#2}}
\newcommand{\derivslash}[2]{\fracslash{d#1}{d#2}}

\newcommand{\pderiv}[2]{\frac{\partial#1}{\partial#2}}
\newcommand{\pderivslash}[2]{\fracslash{\partial#1}{\partial#2}}

\newcommand{\seq}[4]{\setb{{#1}_{#2}}_{\ifthenelse{\equal{#3}{}}{}{#2=#3}}^{#4}}

\newcommand{\bigO}[1]{\cO\!\rb{#1}}
\newcommand{\litO}[1]{\textsf{o}\!\rb{#1}}
\renewcommand{\lim}{\operatornamewithlimits{\textsf{lim}}}
\renewcommand{\limsup}{\operatornamewithlimits{\textsf{lim\,sup}}}
\renewcommand{\liminf}{\operatornamewithlimits{\textsf{lim\,inf}}}

\newcommand{\sgn}[1]{\textsf{sgn}\!\rb{#1}}

\newcommand{\lnb}[1]{\textsf{ln}\!\rb{#1}}
\renewcommand{\exp}[1]{\textsf{e}^{#1}}
\newcommand{\expb}[1]{\textsf{exp}\!\rb{#1}}

\newcommand{\LambertW}[1]{W\!\rb{#1}}

\newcommand{\cC}{\mathcal{C}}
\newcommand{\cM}{\mathcal{M}}

\newcommand{\cO}{\mathcal{O}}
\newcommand{\cY}{\mathcal{Y}}

\newcommand{\bx}{\overline{x}}
\newcommand{\by}{\overline{y}}

\newcommand{\hrho}{\widehat{\rho}}

\newcommand{\twobyonematrix}[2]{\begin{pmatrix}#1\\#2\end{pmatrix}}
\newcommand{\twobytwomatrix}[4]{\begin{pmatrix}#1&#2\\#3&#4\end{pmatrix}}

\newtheorem{rem}[theorem]{\it{Remark}}
\newenvironment{remark}[1]{\begin{rem}\textnormal{#1}\end{rem}}

\newtheorem{rems}[theorem]{\it{Remarks}}
\newenvironment{remarks}[1]{\begin{rems}\textnormal{#1}\end{rems}}

\newtheorem{claim}[theorem]{\sc{Claim}}

\renewenvironment{proof}{\medskip\noindent\em Proof:
\rm}{\hspace*{\fill}$\square$\medskip}

\title{Properties of the Lindemann Mechanism in Phase Space}

\author{Matt S. Calder\footnotemark[2]~\footnotemark[3] \and David Siegel\footnotemark[2]~\footnotemark[4]}

\begin{document}

\maketitle

\renewcommand{\thefootnote}{\fnsymbol{footnote}}

\footnotetext[2]{Department of Applied Mathematics, University of
Waterloo, 200 University Avenue West, Waterloo, Ontario N2L 3G1,
Canada}

\footnotetext[3]{Research partially supported by an Ontario Graduate
Scholarship.}

\footnotetext[4]{Research supported by a Natural Sciences and
Engineering Research Council of Canada Discovery Grant.}

\renewcommand{\thefootnote}{\arabic{footnote}}

\begin{abstract}
We study the planar and scalar reductions of the nonlinear Lindemann
mechanism of unimolecular decay. First, we establish that the
origin, a degenerate critical point, is globally asymptotically
stable. Second, we prove there is a unique scalar solution (the slow
manifold) between the horizontal and vertical isoclines. Third, we
determine the concavity of all scalar solutions in the nonnegative
quadrant. Fourth, we establish that each scalar solution is a centre
manifold at the origin given by a Taylor series. Moreover, we
develop the leading-order behaviour of all planar solutions as time
tends to infinity. Finally, we determine the asymptotic behaviour of
the slow manifold at infinity by showing that it is a unique centre
manifold for a fixed point at infinity.
\end{abstract}

\begin{keywords}
Lindemann, Unimolecular decay, Slow manifold, Centre manifold,
Asymptotics, Concavity, Isoclines, Differential inequalities, Saddle
node
\end{keywords}

\begin{AMS}
Primary: 80A30; Secondary: 34C05, 34E05
\end{AMS}

\pagestyle{myheadings} \thispagestyle{plain} \markboth{Matt S.
Calder and David Siegel}{Properties of the Lindemann Mechanism in
Phase Space}

\section{Introduction} \label{sec001}

A unimolecular reaction occurs when a single molecule undergoes a
chemical change. For unimolecular decay (or isomerization) to occur,
a certain amount of energy must be supplied externally, namely the
activation energy. For some time, there was debate concerning just
how the molecules became activated. Frederick Lindemann suggested
\cite{Lindemann} in 1922 that unimolecular decay involves two steps,
namely the activation/deactivation by collision step and the
reaction step. Cyril Norman Hinshelwood made further contributions
\cite{Hinshelwood} to the Lindemann model in 1926 and, consequently,
the Lindemann mechanism is occasionally referred to as the
Lindemann-Hinshelwood mechanism. For general references on
unimolecular reactions and the Lindemann mechanism, see, for
example, \cite{Benson,Forst,GilbertSmith,MoorePearson}.

Suppose that the reactant $A$ is to decay into the product $P$.
Then, according to the (nonlinear, self-activation) Lindemann
mechanism, $A$ is activated by a collision with itself producing the
activated complex $B$. This activation can also be reversed. The
complex then decays into the product. Symbolically,
\begin{equation} \label{eq10.001}
    A + A \revreaction{k_1}{k_{-\!1}} A + B,
    \quad
    B \reaction{k_2} P,
\end{equation}
where $k_1$, $k_{-\!1}$, and $k_2$ are the rate constants.

\subsection{Differential Equations and Common Approximations}

Using the Law of Mass Action, the concentrations of $A$ and $B$ in
\eqref{eq10.001} satisfy the planar reduction
\begin{equation} \label{eq10.004}
    \deriv{a}{\tau} = k_{-\!1} a b - k_1 a^2,
    \quad
    \deriv{b}{\tau} = k_1 a^2 - k_{-\!1} a b - k_2 b,
\end{equation}
where $\tau$ is time. The traditional initial conditions are
${a(0)=a_0}$ and ${b(0)=0}$. However, we will allow the initial
condition for $b$ to be arbitrary. Note that
${\derivslash{p}{\tau}=k_2b}$ and (traditionally) ${p(0)=0}$. Since
the differential equations \eqref{eq10.004} do not depend on the
differential equation for $p$, we need only consider the
differential equations for $a$ and $b$.

There are two common approximations for the planar reduction. The
Equilibrium Approximation (EA) and the Quasi-Steady-State
Approximation (QSSA), which have proved successful for the
Michaelis-Menten mechanism of an enzyme-substrate reaction
\cite{MichaelisMenten}, have also been applied to the Lindemann
mechanism. See, for example, \S2.2 of \cite{GilbertSmith} and pages
122--126 and 313--317 of \cite{MoorePearson}. These approximations
are frequently employed to simplify more complicated networks in
chemical kinetics which may involve, for example, inhibition or
cooperativity effects. For the EA, one assumes
${\derivslash{a}{\tau} \approx 0}$ for sufficiently large time. This
yields
\[
    b(\tau) \approx \frac{k_1}{k_{-\!1}} \, a(\tau).
\]
The QSSA, on the other hand, assumes ${\derivslash{b}{\tau} \approx
0}$ for sufficiently large time. This yields
\[
    b(\tau) \approx \frac{ k_1 a(\tau)^2 }{ k_2 + k_{-\!1} a(\tau) }.
\]

It will be useful for us to convert the planar reduction to
dimensionless form. Define
\[
    t := k_2 \tau,
    \quad
    x := \rb{ \frac{k_1}{k_2} } a,
    \quad
    y := \rb{ \frac{k_1}{k_2} } b,
    \mymbox{and}
    \vep := \frac{k_{-\!1}}{k_1} > 0,
\]
which are all dimensionless. Thus, $t$, $x$, and $y$ are,
respectively, a scaled time, reactant concentration, and complex
concentration. Moreover, the parameter ${\vep>0}$ measures how slow
the deactivation of the reactant is compared to the activation.
Traditionally, one may want to consider $\vep$ to be small. In our
analysis, the size of $\vep$ does not matter.

It is easy to verify that, with the above rescaling, the planar
reduction \eqref{eq10.004} becomes
\begin{equation} \label{eq11.001}
    \dot{x} = -x^2 + \vep x y,
    \quad
    \dot{y} = x^2 - \rb{ 1 + \vep x } y,
\end{equation}
where $\dot{}=\derivslash{}{t}$. Observe that the system
\eqref{eq11.001} is a regular perturbation problem. Occasionally, we
will need to refer to the vector field of this planar system. Hence,
define
\begin{equation} \label{eq10.009}
    \vec{g}(\vec{x}) :=
    \twobyonematrix{ -x^2 + \vep x y }{ x^2 - \rb{ 1 + \vep x } y },
\end{equation}
where ${\vec{x} := (x,y)^T}$. Moreover, we will be working with the
scalar reduction
\begin{equation} \label{eq11.002}
    y' = \frac{ x^2 - \rb{ 1 + \vep x } y }{ -x^2 + \vep x y },
\end{equation}
where $'=\derivslash{}{x}$, which describes solutions of the planar
reduction \eqref{eq11.001} in the $xy$-plane by suppressing the
dependence on time. We will need to refer to the right-hand side of
the scalar reduction. Hence, define
\begin{equation} \label{eq10.011}
    f(x,y) := \frac{ x^2 - \rb{ 1 + \vep x } y }{ -x^2 + \vep x y }.
\end{equation}

\begin{remark}
The function $f(x,y)$ can be written
${f(x,y)=\fracslash{g_2(x,y)}{g_1(x,y)}}$, where $g_1(x,y)$ and
$g_2(x,y)$ are the components of the function $\vec{g}(\vec{x})$
given in \eqref{eq10.009}.  Note the use of the row vector $(x,y)$
in the arguments of $g_1$ and $g_2$ as opposed to the column vector
$\vec{x}$.  To alleviate notational headaches that arise from
competing conventions involving row and column vectors, when there
will be no confusion we will use the notation appropriate for the
given situation.
\end{remark}

\subsection{Discussion}

The Lindemann mechanism has been explored mathematically by others.
For example, the planar system \eqref{eq11.001} has been treated as
a perturbation problem in
\cite{RichardsonVolkLauLinEyring,ShinGiddings}. Furthermore, Simon
Fraser has used the Lindemann mechanism \cite{Fraser1988,Fraser2004}
as an example in his work on the dynamical systems approach to
chemical kinetics. Finally, properties of the Lindemann mechanism
have been explored mathematically in \cite{delaSelvaPina}.

The focus of this paper is the detailed behaviour of solutions to
the planar reduction \eqref{eq11.001} in phase space. That is, we
perform a careful phase-plane analysis to reveal important details
that a common phase-plane analysis would miss. Equivalently, we are
studying solutions of the scalar reduction \eqref{eq11.002}. It is
worth reiterating that our analysis does not depend on the size of
the parameter $\vep$ (which is traditionally treated as being
small). In \S\ref{sec002}, we present the basic phase portrait in
the nonnegative quadrant. Moreover, we establish that the origin is
a saddle node and is globally asymptotically stable with respect to
the nonnegative quadrant. In \S\ref{sec003}, we describe the
isocline structure which we exploit in later sections. For example,
the isocline structure plays an important role in determining the
concavity and asymptotic behaviour of solutions. In \S\ref{sec004},
we prove that there is a unique slow manifold $\cM$ between the
horizontal and vertical isoclines. To this end, we use a nonstandard
version of the Antifunnel Theorem. In \S\ref{sec005}, we determine
the concavity of all solutions, excluding the slow manifold, in the
nonnegative quadrant by analyzing an auxiliary function. In
\S\ref{sec006}, we use the Centre Manifold Theorem to show that all
scalar solutions are given by a Taylor series at the origin.
Moreover, we establish the leading-order behaviour of planar
solutions as ${t\to\infty}$. This is nontrivial due to the fact that
the origin is a degenerate critical point. In \S\ref{sec007}, we
show that all planar solutions must enter and remain in the region
bounded by the horizontal isocline and the isocline for the slope of
the slow manifold at infinity. In \S\ref{sec008}, we single out
properties of the slow manifold. These properties include concavity,
monotonicity, and asymptotic behaviour at the origin and at
infinity. Finally, in \S\ref{sec009}, we state some open problems.

\section{Phase Portrait} \label{sec002}

A computer-generated phase portrait for the planar reduction
\eqref{eq11.001}, restricted to the physically relevant and
positively invariant nonnegative quadrant $S$, is given in
Figure~\ref{fig10.002}. In this paper, we will develop precise
mathematical properties of the phase portrait. Equivalently, we
develop results on solutions of the scalar reduction
\eqref{eq11.002}.

\begin{figure}[t]
\begin{center}
    \includegraphics[width=\mediumfigwid]{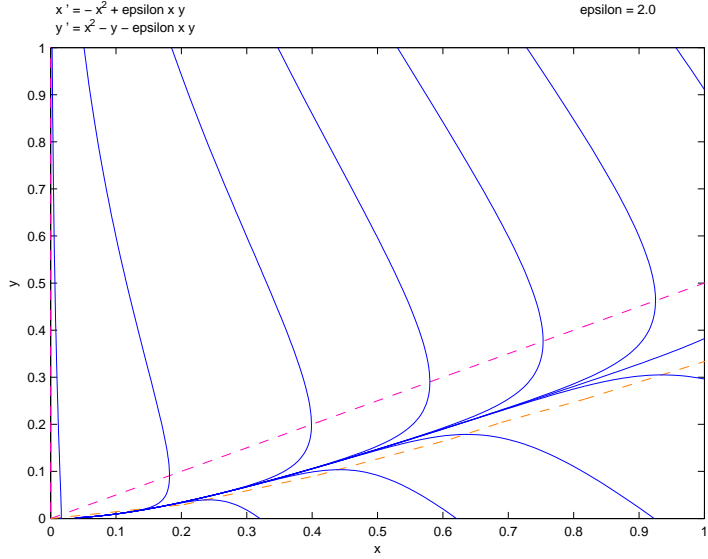}
    \caption{A phase portrait for \eqref{eq11.001} for parameter value ${\vep=2.0}$ along
    with the isoclines.} \label{fig10.002}
\end{center}
\end{figure}

The horizontal and vertical isoclines for the planar system
\eqref{eq11.001}, which are found by respectively setting
${\dot{y}=0}$ and ${\dot{x}=0}$, are given by
\begin{equation} \label{eq11.004}
    y = H(x) := \frac{x^2}{ 1 + \vep x }
    \mymbox{and}
    y = V(x) := \frac{x}{\vep}.
\end{equation}
The QSSA corresponds to the horizontal isocline (the
quasi-steady-state manifold) and the EA corresponds to the vertical
isocline (the rapid equilibrium manifold). Observe that
${H(0)=0=V(0)}$, both $H$ and $V$ are strictly increasing, and
${V(x)>H(x)}$ for all ${x>0}$. It appears from the phase portrait
that the region between the isoclines,
\[
    \Gamma_0 := \setb{ ~ (x,y) \st x > 0, ~ H(x) \leq y \leq V(x) ~ },
\]
acts like a trapping region for (time-dependent) solutions of the
planar reduction. Moreover, the origin appears to be globally
asymptotically stable.

\begin{theorem} \label{thm0011}
Consider the planar system \eqref{eq11.001}.
\begin{enumerate}[(a)]
    \item
        The region $\Gamma_0$ is positively invariant.
    \item
        Let $\vec{x}(t)$ be the solution with
        initial condition ${\vec{x}(0)=\vec{x}_0}$, where ${\vec{x}_0\in\setb{\vec{x} \in S \st x > 0}}$.
        Then, there is a ${t^* \geq 0}$ such
        that ${\vec{x}(t)\in\Gamma_0}$ for all ${t \geq t^*}$.
    \item
        Let $\vec{x}(t)$ be the solution with
        ${\vec{x}(0)=\vec{x}_0}$, where ${\vec{x}_0 \in S}$.
        Then, ${\vec{x}(t)\to\vec{0}}$ as ${t\to\infty}$.
\end{enumerate}
\end{theorem}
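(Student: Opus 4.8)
The plan is to establish the three parts in order, with the identity $\derivb{x+y}{t}=-y$ for \eqref{eq11.001} (reflecting conservation of total concentration in the kinetics \eqref{eq10.001}) as the workhorse. Since $y\ge 0$ on $S$, it shows $x+y$ is nonincreasing along any solution in $S$, so that solution is bounded (it stays in the triangle $\{\,x,y\ge 0,\ x+y\le x(0)+y(0)\,\}$), exists for all $t\ge 0$, and has $x(t)+y(t)$ decreasing to a limit $L\ge 0$. I will also use the sign rules $\dot{x}=\vep x(y-V(x))$ and $\dot{y}=(1+\vep x)(H(x)-y)$, so that for $x>0$ the sign of $\dot{x}$ is that of $y-V(x)$ and the sign of $\dot{y}$ is that of $H(x)-y$, together with the facts recorded after \eqref{eq11.004} that $H$ and $V$ are increasing with $V>H$ on $(0,\infty)$, and that $\vec{0}$ is the only equilibrium of \eqref{eq11.001} in $S$.

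For part (a) I would show the vector field points into $\Gamma_0$ along its boundary. Writing $w:=y-H(x)$ and $v:=V(x)-y$, so $\Gamma_0\cap\{x>0\}=\{\,w\ge 0,\ v\ge 0,\ x>0\,\}$: on $\{w=0,\ x>0\}$ one has $\dot{y}=0$ and, since there $y=H(x)<V(x)$, also $\dot{x}<0$, hence $\dot{w}=-H'(x)\dot{x}>0$; on $\{v=0,\ x>0\}$ one has $\dot{x}=0$ and, since there $y=V(x)>H(x)$, also $\dot{y}<0$, hence $\dot{v}=-\dot{y}>0$. Moreover $\dot{x}=\vep xy-x^2\ge -x^2$ on $S$, so a solution starting with $x>0$ obeys $x(t)\ge x(0)/(1+x(0)t)>0$ and cannot reach the $y$-axis in finite time. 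Combining these, a standard first-exit argument (at a first exit time the solution would sit on $\{w=0\}$ or $\{v=0\}$ with the transverse derivative pointing outward, contradicting the inequalities just computed) gives positive invariance of $\Gamma_0$.

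For part (b), if $\vec{x}_0\in\Gamma_0$ take $t^*=0$; otherwise $\vec{x}_0$ lies in $R_+:=\{x>0,\ y>V(x)\}$ or in $R_-:=\{x>0,\ 0\le y<H(x)\}$, and since $\overline{R_+}$ and $\overline{R_-}$ are disjoint and $x$ remains positive, a solution that never meets $\Gamma_0$ must remain in whichever of $R_+,R_-$ it starts in. On $R_+$ one has $\dot{x}>0$ and $\dot{y}<0$; on $R_-$ one has $\dot{x}<0$ and $\dot{y}>0$. In either case both coordinates are monotone and bounded, so the solution converges, necessarily to the only equilibrium $\vec{0}$; but on $R_+$ one has $x(t)\ge x_0>0$, while on $R_-$ one has $y(t)$ eventually bounded below by a positive constant (since $\dot{y}>0$ there and, if $y_0=0$, then $\dot{y}(0)=x_0^2>0$). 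This contradiction shows the solution leaves $R_\pm$ at some finite time $t_1$, and because $x$ stays positive it does so across $\{y=V(x)\}$ (respectively $\{y=H(x)\}$) at a point with $x>0$, where $V>H$ places it in $\Gamma_0$; by part (a) it stays there, so $t^*=t_1$ works.

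For part (c): if $x_0=0$ then $x\equiv 0$ and $y$ solves $\dot{y}=-y$, so $\vec{x}(t)\to\vec{0}$; if $x_0>0$, part (b) puts the solution in $\Gamma_0$ for $t\ge t^*$, where $y\le V(x)$ forces $\dot{x}\le 0$, so $x(t)$ decreases to some $x_\infty\ge 0$, and combined with $x(t)+y(t)\to L$ this gives $y(t)\to L-x_\infty$, so $\vec{x}(t)$ converges, again necessarily to $\vec{0}$. I expect the principal obstacle to be the degeneracy of the origin (the Jacobian of $\vec{g}$ there has eigenvalues $0$ and $-1$), which rules out any linearization-based argument and forces everything to be global; concretely, the delicate step is the bookkeeping in part (b), ruling out that a solution lingers forever just outside $\Gamma_0$ or escapes along the $y$-axis, and this is exactly where the monotone quantity $x+y$, the one-signed behaviour of $\dot{x}$ and $\dot{y}$ on $R_\pm$, and uniqueness of the equilibrium must be combined.
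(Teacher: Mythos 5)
Your proposal is correct and follows essentially the same route as the paper: an inward-pointing-vector-field argument on the isocline boundaries for (a), a contradiction via monotone bounded coordinates converging to a point that would have to be a nonzero equilibrium for (b), and monotone convergence inside $\Gamma_0$ for (c). Your two small variations—using $\derivslash{(x+y)}{t}=-y$ globally for boundedness (the paper derives the equivalent bound $y(t)\leq y_0-(x(t)-x_0)$ locally in the region above $V$ from the slope inequality $\dot{y}/\dot{x}<-1$) and the explicit estimate $x(t)\geq x_0/(1+x_0t)$ to rule out escape through the origin (the paper invokes non-intersection of solutions)—are sound and, if anything, slightly more explicit.
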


\needspace{2.0cm}
\begin{proof}
\begin{enumerate}[(a)]
    \item
        It follows from the definition \eqref{eq10.009} of the vector
        field $\vec{g}$ that ${\vec{g}\dotprod\normal<0}$ along $V$
        and $H$, where $\normal$ is the outward unit normal vector.  Thus, solutions cannot exit $\Gamma_0$ through
        the horizontal or vertical isoclines. Furthermore, solutions cannot escape from $\Gamma_0$ through the origin since solutions
        do not intersect.  Hence, $\Gamma_0$ is positively invariant.
    \item
        We will break the proof into cases.

        \begin{description}
            \item[Case 1: ${(x_0,y_0)\in\Gamma_0}$.]
                Since $\Gamma_0$ is positively invariant, ${\vec{x}(t)\in\Gamma_0}$ for all ${t \geq
                0}$.
            \item[Case 2: ${x_0>0}$ and ${y_0>V(x_0)}$.]
                Suppose, on the contrary, that $\vec{x}(t)$ does not enter
                $\Gamma_0$. It follows that ${y(t)>V(x(t))}$ for
                all ${t \geq 0}$. Using the differential equation
                \eqref{eq11.001}, we know ${\dot{x}(t)>0}$ and
                ${\dot{y}(t)<0}$ for all ${t \geq 0}$. Now, we see
                from the definition \eqref{eq10.011} of the function $f$ that
                \[
                    \frac{\dot{y}(t)}{\dot{x}(t)}
                    = f(x(t),y(t))
                    = -1 - \frac{y(t)}{ \vep x(t) y(t) - x(t)^2 }
                    < -1
                    \mymbox{for all}
                    t \geq 0.
                \]
                Note that ${\vep x y - x^2>0}$ since ${y>V(x)=\fracslash{x}{\vep}}$ and ${x,y>0}$. Thus,
                \[
                    \dot{y}(s) < -\dot{x}(s)
                    \mymbox{for all}
                    s \geq 0.
                \]
                Integrating with respect to $s$ from $0$ to $t$ and
                rearranging, we obtain
                \[
                    y(t) \leq y_0 - \sqb{ x(t) - x_0 }
                    \mymbox{for all}
                    t \geq 0.
                \]

                Let $\rb{x_1,V(x_1)}$ be the point of intersection of the vertical
                isocline ${y=V(x)}$ and the straight line
                ${y=y_0-\rb{x-x_0}}$. Obviously,
                ${x_1>x_0}$. Since $x(t)$ is
                monotone increasing and bounded above by $x_1$, we see that there is
                an ${\bx\in\sqb{x_0,x_1}}$ such that ${x(t)\to\bx}$
                as ${t\to\infty}$. Similarly, since $y(t)$ is
                monotone decreasing and bounded below by $V(x_0)$, we see that there
                is a ${\by\in\sqb{V(x_0),y_0}}$ such that
                ${y(t)\to\by}$ as ${t\to\infty}$. Thus, the $\omega$-limit set is
                ${\omega(x_0,y_0)=\setb{(\bx,\by)}}$. Since $\omega(x_0,y_0)$ is
                invariant and $(0,0)$ is the only equilibrium point of the system,
                ${\bx=0}$ and ${\by=0}$. This is a contradiction.
            \item[Case 3: ${x_0>0}$ and ${0 \leq y_0 < H(x_0)}$.]
                This case is proved in a manner similar to Case~2.
        \end{description}
    \item
        If ${x_0=0}$, the solution of
        \eqref{eq11.001} is ${\vec{x}(t)=\rb{0,y_0\exp{-t}}^T}$.
        This clearly satisfies ${\vec{x}(t)\to\vec{0}}$ as
        ${t\to\infty}$. Thus, we can assume ${x_0>0}$ and, by virtue of part (b), we can assume
        further that ${(x_0,y_0)\in\Gamma_0}$. It follows from
        the differential equation \eqref{eq11.001} and
        the fact that $\Gamma_0$ is positively invariant that
        ${\dot{x}(t) \leq 0}$ and ${\dot{y}(t) \leq 0}$ for all ${t \geq
        0}$.  Since both $x(t)$ and $y(t)$ are decreasing
        and bounded below by zero, by the Monotone Convergence
        Theorem we know that there are $\bx$ and $\by$ such that
        ${x(t)\to\bx}$ and ${y(t)\to\by}$ as ${t\to\infty}$. Thus, the $\omega$-limit set is
        ${\omega(x_0,y_0)=\setb{(\bx,\by)}}$. Since $\omega(x_0,y_0)$ is
        invariant and $(0,0)$ is the only equilibrium point of the system,
        ${\bx=0}$ and ${\by=0}$.
\end{enumerate}
\end{proof}

The Jacobian matrix at the origin for the planar system
\eqref{eq11.001} is $\diag{0,-1}$. Thus, the origin is a
nonhyperbolic fixed point. The Hartman-Grobman Theorem,
unfortunately, cannot be applied here. Using Theorem~65 in \S9.21 of
\cite{Andronov}, the origin is a saddle node which consists of two
hyperbolic sectors and one parabolic sector. As we will effectively
show later, $S$ is contained in the parabolic sector.

\section{The Isocline Structure} \label{sec003}

The horizontal and vertical isoclines, along with all isoclines
between them, will be very useful. If we solve ${f(x,y)=c}$ for $y$,
we obtain ${y=F(x,c)}$, where
\begin{equation} \label{eq11.006}
    F(x,c) := \frac{ x^2 }{ K(c) + \vep x },
    \quad
    c \ne -1,
    \quad
    x \ne -\vep^{-1} K(c),
\end{equation}
and
\begin{equation} \label{eq11.005}
    K(c) := \frac{1}{1+c},
    \quad
    c \ne -1.
\end{equation}
That is, ${y=F(x,c)}$ is the isocline for slope $c$.
Figure~\ref{fig11.001} gives a sketch of $K$. Note that each
isocline, for ${c\in\Rd{}\backslash\setb{-1}}$, has a vertical
asymptote at ${x=-\vep^{-1}K(c)}$.

\begin{figure}[t]
\begin{center}
    \includegraphics[width=\smallfigwid]{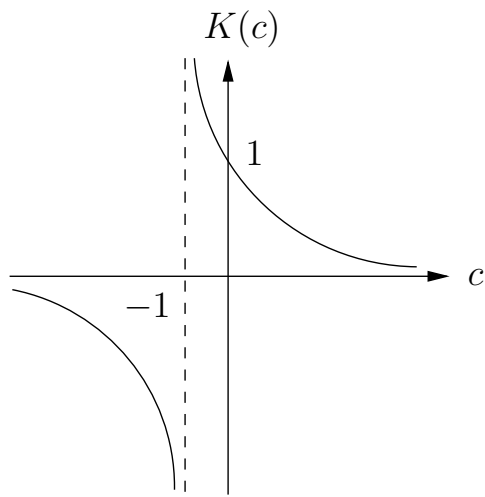}
    \caption{Graph of the function $K(c)$.} \label{fig11.001}
\end{center}
\end{figure}

\begin{samepage}
\begin{remarks}~
\begin{enumerate}[(i)]
    \item
        The interior of the region $\Gamma_0$ corresponds to ${0<c<\infty}$ and
        ${0<K(c)<1}$.
    \item
        Two exceptional isoclines are ${y=V(x)}$ (the vertical isocline) and ${y=0}$
        which correspond, respectively, to
        \[
            \lim_{c\to\infty} F(x,c) = \frac{x}{\vep}
            \mymbox{and}
            \lim_{c\to-1} F(x,c) = 0.
        \]
\end{enumerate}
\end{remarks}
\end{samepage}

\begin{claim}
Let ${c\in\Rd{}\backslash\setb{-1}}$ and let ${w(x):=F(x,c)}$ be the
isocline for slope $c$. Then, the derivative of $w$ satisfies
\begin{equation} \label{eq11.007}
    \lim_{x\to\infty} w'(x) = \vep^{-1}.
\end{equation}
Furthermore, $w$ is concave up for all ${x>-\vep^{-1}K(c)}$ and
satisfies the differential equation
\begin{equation} \label{eq11.008}
    x^2 w' + w \rb{ \vep w - 2 x } = 0.
\end{equation}
\end{claim}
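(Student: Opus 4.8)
The plan is to derive all three conclusions directly from the explicit formula ${w(x) = F(x,c) = \frac{x^2}{K(c) + \vep x}}$, since every assertion in the statement is just a matter of differentiating a rational function. Throughout I would abbreviate ${K := K(c)}$ and recall from \eqref{eq11.005} that ${K = \frac{1}{1+c} \ne 0}$ for every admissible $c$; this nonvanishing will matter for the strictness of the concavity. The hypothesis ${x > -\vep^{-1}K}$ is exactly the condition that keeps the denominator ${K + \vep x}$ and its powers positive, so I would keep track of it at each step.

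For the limit \eqref{eq11.007}, I would compute $w'$ once and for all: differentiating ${w = \frac{x^2}{K+\vep x}}$ gives ${w'(x) = \frac{x(2K+\vep x)}{(K+\vep x)^2}}$, a quotient of two quadratics in $x$ whose leading coefficients are $\vep$ and $\vep^2$; letting ${x \to \infty}$ yields $\vep^{-1}$. For the differential equation \eqref{eq11.008}, rather than substituting the formula and grinding, I would differentiate the defining identity ${w\,(K+\vep x) = x^2}$ to obtain ${w'(K+\vep x) + \vep w = 2x}$; multiplying through by $w$ and using the identity once more to replace ${w(K+\vep x)}$ by $x^2$ turns this into ${x^2 w' = 2xw - \vep w^2}$, which is \eqref{eq11.008}.

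For the concavity, I would differentiate $w'$ a second time (equivalently, differentiate \eqref{eq11.008} and solve for $w''$); the numerator collapses to a positive constant multiple of $K^2$, leaving ${w''(x) = \frac{2K^2}{(K+\vep x)^3}}$. On the stated domain ${x > -\vep^{-1}K}$ the denominator is positive and ${K \ne 0}$, so ${w'' > 0}$ there and $w$ is (strictly) concave up. I do not expect a genuine obstacle: the computations are short and routine. The only points needing care are keeping the domain restriction in view so the denominators do not change sign, and noting that $K(c)$ never vanishes, which is what upgrades ${w'' \ge 0}$ to ${w'' > 0}$. One could instead obtain \eqref{eq11.008} abstractly by implicitly differentiating ${f(x,w(x)) = c}$, but since $f$ is itself a quotient this is no cleaner than working with $F$ directly.
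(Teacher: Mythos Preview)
Your proposal is correct and is precisely the kind of direct computation the paper has in mind; the paper itself omits the proof as ``straight-forward.'' Your derivation of \eqref{eq11.008} by differentiating the defining identity ${w(K+\vep x)=x^2}$ and then multiplying by $w$ is a clean shortcut, and the formula ${w''(x)=\frac{2K^2}{(K+\vep x)^3}}$ together with ${K\ne 0}$ gives the strict concavity exactly as you say.
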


\begin{proof}
The proof is straight-forward and omitted.
\end{proof}

\begin{remark}
The vertical isocline satisfies the limit \eqref{eq11.007} and the
differential equation \eqref{eq11.008}. The isocline ${w(x)=0}$ (the
isocline for slope $-1$) also satisfies the differential equation
but does not satisfy the limit.
\end{remark}

The isocline structure is sketched in Figure~\ref{fig11.002}. We
will often appeal to the isocline structure. For example, if a
scalar solution of \eqref{eq11.002} is above the line ${y=0}$ and
below the horizontal isocline ${y=H(x)}$, we know that
${-1<y'(x)<0}$.

\begin{figure}[t]
\begin{center}
    \includegraphics[width=\largefigwid]{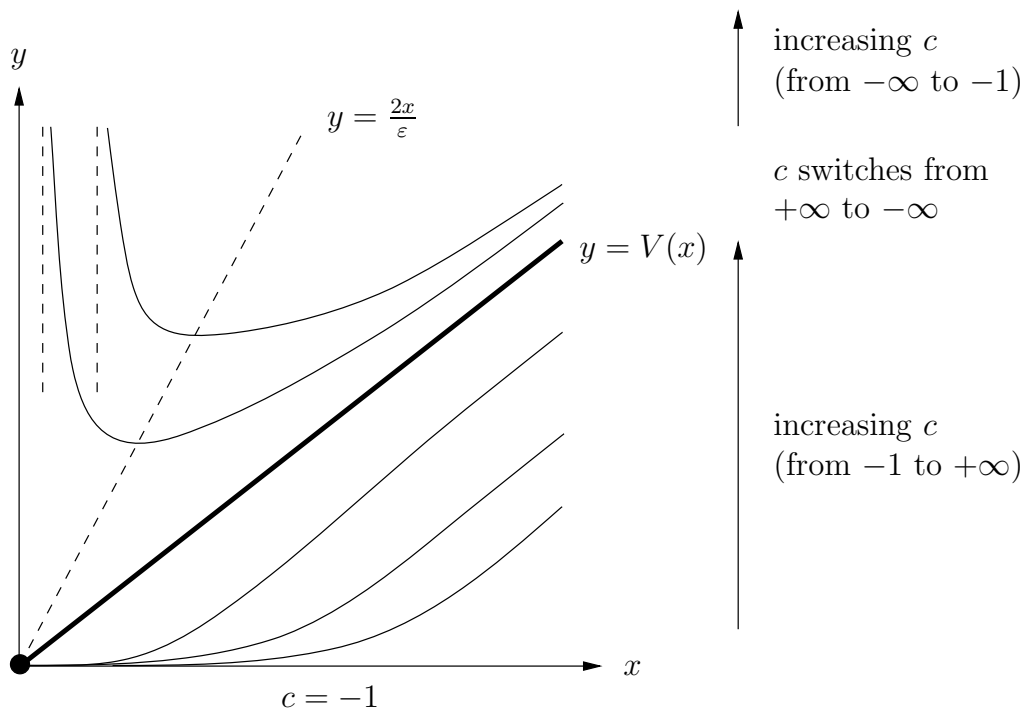}
    \caption{Sketch of the isocline structure of \eqref{eq11.002}. The
    isoclines above the vertical isocline have zero slope along the
    line ${y=\fracslash{2x}{\vep}}$.} \label{fig11.002}
\end{center}
\end{figure}

\section{Existence and Uniqueness of the Slow Manifold} \label{sec004}

It appears from the given phase portrait, Figure~\ref{fig10.002},
that there exists a unique solution to \eqref{eq11.002} that lies
entirely in the region between the horizontal and vertical
isoclines. To prove this, we will need to use a nonstandard version
of the Antifunnel Theorem. See, for example, Chapters~1 and 4 of
\cite{HubbardWest}.

\begin{definition}
Let ${I=[a,b)}$ or ${I=(a,b)}$ be an interval (where
${a<b\leq\infty}$) and consider the first-order differential
equation ${y'=f(x,y)}$ over $I$. Let
${\alpha,\beta\in\Cr{1}{I,\Rd{}}}$ be functions satisfying
\begin{equation} \label{eq-001}
    \alpha'(x) \leq f(x,\alpha(x))
    \mymbox{and}
    f(x,\beta(x)) \leq \beta'(x)
    \mymbox{for all}
    x \in I.
\end{equation}
\begin{enumerate}[(a)]
    \item
        The curves $\alpha$ and $\beta$ satisfying \eqref{eq-001} are, respectively,
        a \emph{lower fence} and an \emph{upper fence}. If
        there is always a strict inequality in \eqref{eq-001}, the fences are
        \emph{strong}.  Otherwise, the fences are \emph{weak}.
    \item
        If ${\beta(x)<\alpha(x)}$ on $I$, then the set $\Gamma$ is called an
        \emph{antifunnel}, where
        \[
            \Gamma := \setb{ ~ (x,y) \st x \in I, ~ \beta(x) \leq y \leq \alpha(x) ~ }.
        \]
\end{enumerate}
\end{definition}

\begin{theorem}[Antifunnel Theorem, p.196 of \cite{HubbardWest}] \label{thmD.003}
Let $\Gamma$ be an antifunnel with strong lower and upper fences
$\alpha$ and $\beta$, respectively, for the differential equation
${y'=f(x,y)}$ over the interval $I$, where ${I=[a,\infty)}$ or
${I=(a,\infty)}$. Suppose that there is a function $r$ such that
\[
    r(x) < \pderiv{f}{y}(x,y) \mymbox{for all} (x,y) \in \Gamma
    \mymbox{and}
    \lim_{x\to\infty} \frac{ \alpha(x) - \beta(x) }{ \expb{ \integral{a}{x}{r(s)}{s} } } = 0.
\]
Then, there exists a unique solution $y(x)$ to the differential
equation ${y'=f(x,y)}$ which satisfies ${\beta(x)<y(x)<\alpha(x)}$
for all ${x \in I}$.
\end{theorem}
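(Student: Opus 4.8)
The plan is to reproduce the classical argument (cf.\ \cite{HubbardWest}), which splits naturally into an \emph{existence} half, relying only on the fact that $\alpha$ and $\beta$ are strong fences, and a \emph{uniqueness} half, where the lower bound $r$ for $\pderiv{f}{y}$ and the decay condition on $\alpha-\beta$ are used. Throughout, $f$ is taken regular enough (e.g.\ $C^1$) that the initial value problem for $y'=f(x,y)$ is uniquely solvable and solutions depend continuously on their data. The basic tool is the crossing lemma for strong fences: if $\phi$ solves $y'=f(x,y)$ and $\phi\rb{x_1}\in\sqb{\beta\rb{x_1},\alpha\rb{x_1}}$ for some $x_1\in I$, then $\beta(x)\le\phi(x)\le\alpha(x)$ for every $x\in I$ with $x\le x_1$. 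I would prove this by the standard comparison argument: were $\phi\rb{x^*}>\alpha\rb{x^*}$ for some $x^*<x_1$, let $x^{**}$ be the first point to the right of $x^*$ at which $\phi$ meets $\alpha$; there $\rb{\phi-\alpha}\rb{x^{**}}=0$ while $\phi-\alpha>0$ just to its left, forcing $\phi'\rb{x^{**}}\le\alpha'\rb{x^{**}}$ and contradicting $\phi'\rb{x^{**}}=f\rb{x^{**},\alpha\rb{x^{**}}}>\alpha'\rb{x^{**}}$; the bound against $\beta$ is symmetric. Since $\Gamma$ is compact over any $\sqb{x_0,x_1}\subset I$, a corollary is that a solution lying in $\Gamma$ at one point of $I$ cannot blow up to the left of it and is in fact defined, and contained in $\Gamma$, on all of $I$ lying to its left.

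For existence, I would fix $x_0\in I$, choose $x_n\to\infty$ in $I$, and let $y_n$ be the solution with $y_n\rb{x_n}=\alpha\rb{x_n}$. By the crossing lemma, $y_n$ is defined on $\sqb{x_0,x_n}$ and remains in $\Gamma$ there, so $y_n\rb{x_0}$ lies in the compact interval $\sqb{\beta\rb{x_0},\alpha\rb{x_0}}$; passing to a subsequence, $y_n\rb{x_0}\to y^*$, and I let $y$ be the solution through $\rb{x_0,y^*}$. For each fixed $x_1>x_0$ in $I$, continuous dependence on initial data (legitimate because the relevant solutions stay in the compact set $\Gamma$ over $\sqb{x_0,x_1}$) gives $y_n\to y$ uniformly on $\sqb{x_0,x_1}$ once $x_n>x_1$; hence $y(x)=\lim_n y_n(x)\in\Gamma$ for $x\in\sqb{x_0,x_1}$, and letting $x_1\to\infty$ and applying the crossing lemma to $y$ itself yields $y\in\Gamma$ on all of $I$. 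Finally, if $y$ touched $\alpha$ or $\beta$ at some point then, by the strong-fence inequality, $y-\alpha$ (resp.\ $\beta-y$) would be strictly increasing there, so $y$ would leave $\Gamma$ just to the right and, unable to re-cross, never return --- impossible since $I$ extends to $+\infty$. Therefore $\beta(x)<y(x)<\alpha(x)$ for all $x\in I$.

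For uniqueness, suppose $y_1$ and $y_2$ both satisfy $\beta<y_i<\alpha$ on $I$. Distinct solutions never meet, so we may assume $y_1<y_2$ on $I$ and set $\delta:=y_2-y_1>0$. The mean value theorem gives, for each $x$, a point $\xi(x)$ strictly between $y_1(x)$ and $y_2(x)$, so $\rb{x,\xi(x)}\in\Gamma$, with
\[
    \delta'(x)=f\rb{x,y_2(x)}-f\rb{x,y_1(x)}=\pderiv{f}{y}\rb{x,\xi(x)}\,\delta(x)>r(x)\,\delta(x),
\]
so $\rb{\ln\delta}'(x)>r(x)$ on $I$. Fixing $x_2\in I$ and integrating over $\sqb{x_2,x}$ gives $\delta(x)\ge\delta\rb{x_2}\,\expb{\integral{x_2}{x}{r(s)}{s}}$, whence $\delta(x)\,\expb{-\integral{a}{x}{r(s)}{s}}$ is bounded below by a positive constant for large $x$ (in the case $I=(a,\infty)$ one absorbs the finite constant $\integral{a}{x_2}{r(s)}{s}$). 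Since $\alpha(x)-\beta(x)\ge\delta(x)$, this contradicts $\lim_{x\to\infty}\rb{\alpha(x)-\beta(x)}\,\expb{-\integral{a}{x}{r(s)}{s}}=0$; hence the solution constructed above is the only one lying in the open antifunnel. The step I expect to be the main obstacle is the existence argument --- keeping the approximating solutions $y_n$ under control and passing to the limit into the \emph{closed} region $\Gamma$ on every compact subinterval; uniqueness, by contrast, is a one-line Gronwall-type estimate once one notes that two solutions in the antifunnel are ordered.
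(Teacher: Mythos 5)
The paper does not prove this statement at all: it is quoted verbatim (as a nonstandard variant of the Antifunnel Theorem) from p.~196 of Hubbard and West, so there is no in-paper argument to compare against. Your proof is the standard one from that source and is essentially correct: the backward-invariance (``crossing'') lemma for strong fences, existence by taking solutions pinned to $\alpha$ at points $x_n\to\infty$ and passing to a limit, strictness of the inequalities from the strong-fence property combined with backward invariance, and uniqueness via the mean-value/Gronwall estimate $\delta'(x)>r(x)\,\delta(x)$, which is exactly where the hypotheses $r<\partial f/\partial y$ on $\Gamma$ and $(\alpha-\beta)\exp\bigl(-\int_a^x r\bigr)\to 0$ enter; this last condition is what makes the theorem apply to non-narrowing antifunnels such as $\Gamma_1$ in the paper. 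The one step you should tighten is the existence limit: as written, you invoke continuous dependence to get $y_n\to y$ uniformly on $[x_0,x_1]$, but continuous dependence presupposes that the limit solution $y$ already exists on all of $[x_0,x_1]$, which is part of what you are trying to prove. The standard patches are either (i) argue on the maximal interval of $y$: there $y=\lim y_n$ lies in the closed set $\Gamma$, which is compact over $[x_0,x_1]$, so $y$ cannot blow up and extends to all of $[x_0,x_1]$; or (ii) avoid limits of solutions entirely by taking the nested nonempty compact sets $I_n=\{y_0\in[\beta(x_0),\alpha(x_0)]:\ \text{the solution through }(x_0,y_0)\text{ stays in }\Gamma\text{ on }[x_0,x_n]\}$ and picking a point of $\bigcap_n I_n$ (one can also note that $y_n(x_0)$ is monotone, so no subsequence is needed). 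With that repair the argument is complete.
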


\begin{remark}
The standard version of the Antifunnel Theorem applies to
antifunnels that are narrowing. That is, where ${\alpha(x)-\beta(x)
\to 0}$ as ${x\to\infty}$. This version applies to, for example, the
Michaelis-Menten mechanism \cite{CalderSiegel01}.
\end{remark}

\subsection{Existence-Uniqueness Theorem}

We want to show that there is a unique scalar solution that lies
entirely in the region $\Gamma_0$. However, the vertical isocline is
not a strong lower fence since ``${f(x,V(x))=\infty}$.'' This turns
out to be a fortunate obstacle.

Suppose that we want the isocline
\[
    w(x) := \frac{ x^2 }{ r + \vep x },
    \quad
    0 < r < 1
\]
to be a strong lower fence for the differential equation
\eqref{eq11.002} for all ${x>0}$. Note that the condition on $r$
restricts the isocline to being between the horizontal and vertical
isoclines. Note also that
\[
    f(x,w(x)) = K^{-1}(r)
    \mymbox{for all}
    x > 0.
\]
Since $w$ is concave up and satisfies the limit \eqref{eq11.007}, we
know that ${w'(x)<\vep^{-1}}$ for all ${x>0}$. Hence,
\[
    \vep^{-1} \leq K^{-1}(r)
    \implies
    w'(x) < f(x,w(x))
    \mymbox{for all}
    x > 0.
\]
Since we want the isocline that will give us the thinnest
antifunnel, we choose
\begin{equation} \label{eq11.009}
    \alpha(x) := \frac{ x^2 }{ K\rb{\vep^{-1}} + \vep x }.
\end{equation}
Note that $\alpha(x)$ is the isocline for slope $\vep^{-1}$ and
\[
    K\rb{\vep^{-1}} = \frac{\vep}{1+\vep}.
\]
Hence, define the region
\[
    \Gamma_1 := \setb{ ~ (x,y) \st x > 0, ~ H(x) \leq y \leq \alpha(x) ~ }.
\]

\begin{samepage}
\begin{theorem}~ \label{c10t-001}
\begin{enumerate}[(a)]
    \item
        There exists a unique solution ${y=\cM(x)}$ (the slow manifold) in
        $\Gamma_1$ for the scalar differential equation
        \eqref{eq11.002}.
    \item
        The solution ${y=\cM(x)}$ is also the only solution
        that lies entirely in $\Gamma_0$.
\end{enumerate}
\end{theorem}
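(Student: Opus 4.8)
The plan is to obtain part~(a) from the nonstandard Antifunnel Theorem (Theorem~\ref{thmD.003}) on the interval $I=(0,\infty)$, taking the slope-$\vep^{-1}$ isocline $\alpha$ of \eqref{eq11.009} as the strong lower fence and the horizontal isocline $H$ of \eqref{eq11.004} as the strong upper fence, and to obtain part~(b) by showing that every scalar solution of \eqref{eq11.002} whose graph remains in $\Gamma_0$ must in fact remain in the smaller region $\Gamma_1$, so that part~(a) applies.

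For part~(a): the discussion preceding the theorem already shows that $\alpha$ is a strong lower fence (it is concave up with asymptotic slope $\vep^{-1}$, so $\alpha'(x)<\vep^{-1}=f(x,\alpha(x))$ for $x>0$) and that $H<\alpha$ on $(0,\infty)$ since $K\rb{\vep^{-1}}=\vep/\rb{1+\vep}<1$, so $\Gamma_1$ is a genuine antifunnel; moreover $H$ is a strong upper fence because $f(x,H(x))=0<H'(x)$ for $x>0$. The one substantive step is to find $r$ with $r(x)<\pderiv{f}{y}(x,y)$ on $\Gamma_1$ and $\lim_{x\to\infty}\rb{\alpha(x)-H(x)}\big/\expb{\integral{0}{x}{r(s)}{s}}=0$. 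A direct computation gives $\pderiv{f}{y}(x,y)=x^2/\rb{\vep xy-x^2}^2=1/\rb{x-\vep y}^2$, which is positive on $\Gamma_1$ because there $y\le\alpha(x)<V(x)=x/\vep$. Since the antifunnel does \emph{not} narrow --- indeed $\alpha(x)-H(x)\to1/\rb{\vep^2\rb{1+\vep}}$ as $x\to\infty$ --- the classical choice $r\equiv0$ is useless and we need $\expb{\integral{0}{x}{r(s)}{s}}\to\infty$. The upper fence supplies exactly this: on $\Gamma_1$ we have $y\ge H(x)$, hence $0<x-\vep y\le x-\vep H(x)=x/\rb{1+\vep x}$, so $\pderiv{f}{y}(x,y)\ge\rb{1+\vep x}^2/x^2>\vep^2$ throughout $\Gamma_1$. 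Then $r\equiv\vep^2$ works, since $\integral{0}{x}{r(s)}{s}=\vep^2x$ and $\alpha-H$ is bounded, so Theorem~\ref{thmD.003} yields a unique solution $\cM$ with $H(x)<\cM(x)<\alpha(x)$ for all $x>0$. To finish part~(a) I would note that a solution whose graph lies in the \emph{closed} region $\Gamma_1$ can touch neither $H$ at an interior point (it would need slope $H'>0$ but $f(\cdot,H)=0$) nor $\alpha$ (it would need slope $\alpha'<\vep^{-1}$ but $f(\cdot,\alpha)=\vep^{-1}$), so it lies strictly between the fences and equals $\cM$ by the theorem's uniqueness.

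For part~(b): since $\alpha<V$ we have $\Gamma_1\subset\Gamma_0$, so $\cM$ is a solution lying in $\Gamma_0$, and only its uniqueness requires proof. Let $z$ be a solution of \eqref{eq11.002} whose graph lies in $\Gamma_0$ on $(0,\infty)$; it suffices to prove $z\le\alpha$ throughout, for then $z$ lies in $\Gamma_1$ and part~(a) gives $z=\cM$. Suppose instead $z(x_1)>\alpha(x_1)$ for some $x_1>0$, and set $c^*:=f(x_1,z(x_1))$. Differentiability at $x_1$ forces $z(x_1)<V(x_1)$, so $c^*<\infty$; and since $\pderiv{f}{y}(x_1,y)=\rb{x_1-\vep y}^{-2}>0$ for $y<V(x_1)$, the function $f(x_1,\cdot)$ is strictly increasing there, so $z(x_1)>\alpha(x_1)$ and $f(x_1,\alpha(x_1))=\vep^{-1}$ give $c^*>\vep^{-1}$. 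By definition of the isocline $z(x_1)=F(x_1,c^*)$, and by the Claim of \S\ref{sec003} the isocline $w:=F(\cdot,c^*)$ is concave up on $(0,\infty)$ with $w'(x)<\vep^{-1}<c^*=f(x,w(x))$, i.e.\ $w$ is a strong lower fence, while $z'(x_1)=c^*>w'(x_1)$. Hence $z>w$ just to the right of $x_1$, and a solution above a strong lower fence stays above it, so $z(x)>w(x)$ for all $x>x_1$; then $z'(x)=f(x,z(x))>f(x,w(x))=c^*$, giving $z(x)>z(x_1)+c^*\rb{x-x_1}$. Since $c^*>\vep^{-1}$, this eventually exceeds $V(x)=x/\vep$, contradicting $z\le V$ on $\Gamma_0$. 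Therefore $z\le\alpha$ everywhere and $z=\cM$.

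I expect the crux to be the non-narrowing of $\Gamma_1$ in part~(a): because $\alpha(x)-H(x)$ tends to a positive constant rather than to zero, the standard Antifunnel Theorem does not apply, and one must exhibit a lower bound $r$ for $\pderiv{f}{y}$ on $\Gamma_1$ whose integral diverges --- the identity $\pderiv{f}{y}=\rb{x-\vep y}^{-2}$ together with the fence $y\ge H(x)$ is what makes the constant $r\equiv\vep^2$ do the job. A smaller subtlety, in part~(b), is that the isocline $\alpha$ by itself cannot exclude solutions lying above it, since it only forces slope $\ge\vep^{-1}$, the very growth rate of $V$; one must pass to a strictly steeper isocline $F(\cdot,c^*)$ with $c^*>\vep^{-1}$ in order to outrun the vertical isocline.
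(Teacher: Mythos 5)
Your proof is correct. Part (a) is essentially identical to the paper's: the same fences $\alpha$ and $H$, the same bound $\pderiv{f}{y}=\rb{x-\vep y}^{-2}>\vep^2$ on $\Gamma_1$ obtained from $y\geq H(x)$, and the same choice $r\equiv\vep^2$ to compensate for the fact that $\alpha-H$ tends to a positive constant rather than to zero. Part (b) reaches the same conclusion by a slightly different decomposition. The paper argues in two cases relative to $\cM$: a solution below $\cM$ lies in $\Gamma_1$ and, by the uniqueness in part (a), must exit through $H$ and hence leave $\Gamma_0$; a solution above $\cM$ that stays in $\Gamma_0$ is shown (by rerunning the antifunnel argument for the nested antifunnel between $H$ and the fixed isocline $F\rb{x,2\vep^{-1}}$) to eventually meet that isocline, after which it grows with slope at least $2\vep^{-1}$ and overtakes $V$. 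You instead prove directly that any solution confined to $\Gamma_0$ satisfies $z\leq\alpha$, by taking the isocline $F(\cdot,c^*)$ through a hypothetical point above $\alpha$ (so $c^*>\vep^{-1}$) as a strong lower fence and running the same ``outrun $V$'' computation; this collapses the above-$\cM$ and below-$\cM$ cases into one reduction to part (a) and avoids a second application of the Antifunnel Theorem, at the cost of having to handle the boundary cases $z=H$ and $z=\alpha$ separately, which you do. The key mechanism --- a solution above an isocline of slope $c>\vep^{-1}$ grows linearly faster than $V(x)=x/\vep$ --- is the same in both arguments.
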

\end{samepage}

\needspace{2.0cm}
\begin{proof}
\begin{enumerate}[(a)]
    \item
        We have already established that $\alpha$ is a strong lower fence. To show
        that $H$ is a strong upper fence, observe
        \[
            f(x,H(x)) = 0 < H'(x)
            \mymbox{for all}
            x > 0.
        \]
        Moreover, ${\alpha(x)>H(x)}$ for all ${x>0}$. By
        definition, $\Gamma_1$ is an antifunnel.

        Suppose that ${x>0}$ and ${H(x) \leq y < V(x)}$. Using
        \eqref{eq11.004},
        \[
            \frac{ \vep x^2 }{ 1 + \vep x } \leq \vep y < x
            \mymbox{and}
             0 < x - \vep y \leq x - \frac{ \vep x^2 }{ 1 + \vep x }.
        \]
        Rearranging, we have
        \[
            0 < \frac{ 1 + \vep x }{ x } \leq \frac{ 1 }{ x - \vep y }
            \mymbox{and}
            0 < \rb{ \frac{1}{x} + \vep }^2 \leq \frac{ 1 }{ \rb{ x - \vep y }^2 }.
        \]
        Thus, using the definition \eqref{eq10.011} of $f$ we have
        \[
            \pderiv{f}{y}(x,y)
            = \frac{1}{ \rb{ x - \vep y }^2 }
            \geq \rb{ \frac{1}{x} + \vep }^2
            > \vep^2.
        \]
        Hence, we can apply the Antifunnel Theorem with
        ${r(x):=\vep^2}$.  To see why, consider that
        \[
            \integral{0}{\infty}{r(x)}{x} = \infty
            \mymbox{and}
            \lim_{x \to \infty} \sqb{ \alpha(x) - H(x) } = \frac{1}{ \vep^2 \rb{ 1 + \vep } }
        \]
        and thus
        \[
            \lim_{x\to\infty} \frac{ \alpha(x) - H(x) }{ \expb{ \integral{0}{x}{r(s)}{s} } } = 0.
        \]
        We can therefore conclude that there is a unique solution ${y=\cM(x)}$
        to \eqref{eq11.002} that lies in $\Gamma_1$ for all ${x>0}$.
    \item
        Let $y$ be a solution in $\Gamma_0$ lying below $\cM$. Since
        $\cM$ is the only solution contained in $\Gamma_1$, $y$ must
        leave $\Gamma_1$ through the horizontal isocline.

        Now, let $y$ be a solution in $\Gamma_0$ lying above
        $\cM$. Suppose on the contrary that $y$ never leaves $\Gamma_0$
        and thus ${\cM(x)<y(x)<V(x)}$ for all ${x>0}$.
        Consider the isocline ${w(x):=F\rb{x,2\vep^{-1}}}$, which satisfies ${\alpha(x)<w(x)<V(x)}$. Since
        $w$ is a strong lower fence, the proof of the first part of
        the theorem can be adapted to show that $\cM$ is the only
        solution contained in the region between the isoclines $H$
        and $w$. Thus, there is an ${a>0}$ such that ${y(a)=w(a)}$. Now,
        ${w'(x)<\vep^{-1}}$ for all ${x>0}$ and ${y'(x) \geq
        2\vep^{-1}}$ if ${y(x) \geq w(x)}$. It follows from a simple comparison argument that
        ${w(x)<y(x)<V(x)}$ for all ${x>a}$. So,
        ${y(x)>y(a)+2\vep^{-1}\rb{x-a}}$ for all ${x>a}$. This is a
        contradiction since ${y(x)>V(x)}$ for sufficiently large
        $x$.
\end{enumerate}
\end{proof}

\needspace{2.0cm}
\begin{remarks}~
\begin{enumerate}[(i)]
    \item
        We are referring to the unique solution between the
        horizontal and vertical isoclines as \emph{the} slow manifold. However, all scalar
        solutions in $\Gamma_0$ are technically slow manifolds (and,
        as it turns out, centre manifolds).  This is because, as
        functions of time, the solutions approach the origin in the
        slow direction.
    \item
        There is no isocline $w(x)$ such that
        ${w(x)>H(x)}$ and $w(x)$ is a strong upper fence for all
        ${x>0}$. To see why this is the case, suppose ${w(x):=F(x,c)}$, where
        ${c>0}$, satisfies ${w'(x)>f(x,w(x))}$ for all ${x>0}$. This is
        impossible, since ${f(x,w(x))=c}$ for all ${x>0}$ and
        ${w'(x) \to 0}$ as ${x \to 0^+}$.
\end{enumerate}
\end{remarks}

\begin{proposition} \label{c10t-002}
Let $y$ be a solution to \eqref{eq11.002} lying inside $\Gamma_1$
for ${x\in(0,a)}$, where ${a>0}$. Then, we can extend $y(x)$ and
$y'(x)$ to say ${y(0)=0}$ and ${y'(0)=0}$.
\end{proposition}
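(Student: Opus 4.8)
The plan is to use the squeeze between the horizontal isocline $H$ and the fence $\alpha$, both of which pass through the origin, to force $y(0)=0$, and then to extract the derivative at $0$ from the differential equation itself. First I would observe that for $x\in(0,a)$ we have $H(x)\le y(x)\le\alpha(x)$, and since $H(x)=x^2/(1+\vep x)\to0$ and $\alpha(x)=x^2/(K(\vep^{-1})+\vep x)\to0$ as $x\to0^+$, the function $y(x)$ has the limit $0$ as $x\to0^+$; hence the extension $y(0):=0$ makes $y$ continuous on $[0,a)$. Moreover, dividing the squeeze by $x$ gives $H(x)/x\le y(x)/x\le\alpha(x)/x$ with both outer quantities tending to $0$, so $y(x)/x\to0$ as well; this already tells us that if $y'(0)$ exists in the sense of a one-sided derivative of the extended function, its only possible value is $0$.

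Next I would pin down $y'(0)=0$ rigorously. The cleanest route is to note that $y$ solves $y'=f(x,y)$ on $(0,a)$, and to compute the limit of $f(x,y(x))$ as $x\to0^+$ using the bounds just obtained. Writing $f(x,y)=\bigl(x^2-(1+\vep x)y\bigr)/\bigl(-x^2+\vep x y\bigr)$ and dividing numerator and denominator by $x$, we get
\[
    f(x,y(x)) = \frac{x - (1+\vep x)\,\frac{y(x)}{x}}{-x + \vep\,y(x)}.
\]
As $x\to0^+$ we have $x\to0$, $y(x)/x\to0$, and $y(x)\to0$, so the numerator tends to $0$. The denominator also tends to $0$, so this is an indeterminate form and a little more care is needed: I would instead divide numerator and denominator by $x$ a second time, or equivalently use $H(x)\le y(x)$ to bound the denominator $-x+\vep y(x)$ away from $0$ relative to $x^2$. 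Indeed $\vep y(x)-x \ge \vep H(x)-x = -x/(1+\vep x)$, and more usefully, since $y(x)\ge H(x)$ one checks $-x^2+\vep x y(x)\ge -x^2+\vep x H(x) = -x^2/(1+\vep x)$, which has the wrong sign; the right move is to keep $y(x)$ between $H$ and $\alpha$ and write $f(x,y(x))$ as a genuine fraction in which, after cancelling one power of $x$, both numerator and denominator are bounded and the numerator $\to0$ while the denominator stays bounded away from $0$. This yields $\lim_{x\to0^+} y'(x)=\lim_{x\to0^+} f(x,y(x))=0$.

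Finally, with $y$ continuous on $[0,a)$, differentiable on $(0,a)$, and $\lim_{x\to0^+}y'(x)=0$, a standard application of the Mean Value Theorem (or of the theorem that a function whose derivative has a limit at an endpoint is differentiable there with that value) gives that the one-sided derivative $y'(0)$ exists and equals $0$. The main obstacle is the indeterminate-form computation of $\lim_{x\to0^+}f(x,y(x))$: one must use the antifunnel bounds $H(x)\le y(x)\le\alpha(x)$ quantitatively — not merely that they tend to $0$ — so that after cancelling the common factor of $x$ the denominator of $f$ is controlled from below. Everything else is routine.
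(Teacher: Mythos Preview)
Your first paragraph already contains the paper's entire proof, but you do not recognise it. Once you have extended $y$ continuously by $y(0)=0$, the limit
\[
    \lim_{x\to 0^+}\frac{y(x)-y(0)}{x-0}=\lim_{x\to 0^+}\frac{y(x)}{x}=0
\]
is, \emph{by definition}, the statement that the one-sided derivative $y'(0)$ exists and equals $0$. There is no separate existence question to settle; your hedge ``if $y'(0)$ exists\ldots its only possible value is $0$'' is unwarranted. The paper's proof is exactly this squeeze: $0<y(x)<\alpha(x)$ gives $y(0)=0$, and $0<y(x)/x<\alpha(x)/x\to 0$ gives $y'(0)=0$.

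The detour in your second paragraph, trying to show $\lim_{x\to 0^+}y'(x)=\lim_{x\to 0^+}f(x,y(x))=0$, is therefore unnecessary for the proposition. It is also genuinely broken as written. After dividing numerator and denominator of $f$ by $x^2$ you get
\[
    f(x,y(x))=\frac{1-(1+\vep x)\,y(x)/x^2}{-1+\vep\,y(x)/x}.
\]
The denominator tends to $-1$, as you say. But the numerator does \emph{not} tend to $0$: the bound $y(x)\le\alpha(x)$ only yields $y(x)/x^2\le 1/\bigl(K(\vep^{-1})+\vep x\bigr)\to(1+\vep)/\vep$, so the numerator can be as negative as $-1/\vep+\litO{1}$. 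All the antifunnel bounds give you here is $0\le f(x,y(x))\le\vep^{-1}$, which is just the isocline structure of $\Gamma_1$ and says nothing about a limit. Proving $\lim_{x\to 0^+}y'(x)=0$ directly would require a further ingredient (for instance the concavity results of \S\ref{sec005}, which in the paper \emph{depend} on this proposition, so that route would be circular). Drop the second and third paragraphs and simply invoke the definition of the right-hand derivative after your squeeze on $y(x)/x$.
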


\begin{proof}
Observe that
\[
    \lim_{x \to 0^+} \alpha(x) = 0
    \mymbox{and}
    \lim_{x \to 0^+} \frac{\alpha(x)}{x} = 0.
\]
Since
\[
    0 < y(x) < \alpha(x)
    \mymbox{for all}
    x \in \rb{0,a},
\]
the Squeeze Theorem establishes ${y(0)=0}$. Now,
\[
    0 < \frac{y(x)-y(0)}{x-0} < \frac{\alpha(x)}{x}
    \mymbox{for all}
    x \in \rb{0,a}.
\]
Thus, by the Squeeze Theorem again as well as the definition of
(right) derivative, we can say ${y'(0)=0}$.
\end{proof}

\subsection{Nested Antifunnels}

The region $\Gamma_1$ is the thinnest antifunnel for ${x>0}$ with
isoclines as boundaries. However, we can find thinner antifunnels
than $\Gamma_1$ which are valid for different intervals. For an
isocline ${w(x):=F(x,c)}$, where ${0<c<\vep^{-1}}$, to be a strong
lower fence on an interval, we need ${w'(x)<f(x,w(x))}$. Solving the
equation ${w'(x)=f(x,w(x))}$, as we shall see, gives ${x=\xi(c)}$,
where
\begin{equation} \label{eq11.010}
    \xi(c) := \sqb{ \frac{ K(c) }{ \vep } } \sqb{ \frac{ 1 }{ \sqrt{ 1 - \vep c } }  - 1 },
    \quad
    c \in \rb{0,\vep^{-1}}.
\end{equation}
Note that ${1 - \vep c > 0}$. We will quickly establish a few
properties of $\xi(c)$. See Figure~\ref{fig11.003} for a sketch of
the function.

\begin{figure}[t]
\begin{center}
    \includegraphics[width=\smallfigwid]{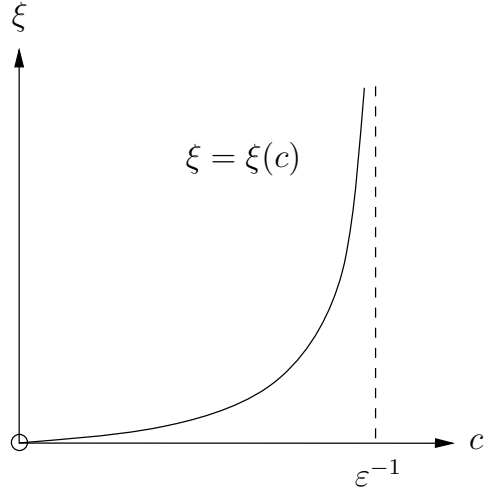}
    \caption[Graph of $\xi(c)$]{Graph of the function $\xi(c)$ for arbitrary ${\vep>0}$.} \label{fig11.003}
\end{center}
\end{figure}

\begin{samepage}
\begin{claim}~
\begin{enumerate}[(a)]
    \item
        The function $\xi(c)$ satisfies
        \begin{equation} \label{eq11.011}
            \lim_{c \to 0^+} \xi(c) = 0,
            \quad
            \lim_{ c \to \rb{\vep^{-1}}^- } \xi(c) = \infty,
            \mymbox{and}
            \xi'(c) > 0 \mymbox{for all} c \in \rb{0,\vep^{-1}}.
        \end{equation}
    \item
        The function $\xi(c)$ is analytic for all ${c\in\rb{0,\vep^{-1}}}$.
        Furthermore, $\xi(c)$ has analytic inverse $\xi^{-1}(x)$ defined for
        all ${x>0}$.
\end{enumerate}
\end{claim}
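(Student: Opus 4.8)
The plan is to dispose of part~(a) by direct computation and then to deduce part~(b) from part~(a) together with standard facts about analytic functions.

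For the two limits in \eqref{eq11.011} I would argue as follows. The factor $K(c)=\fracslash{1}{\rb{1+c}}$ is continuous and strictly positive on the closed interval $\sqb{0,\vep^{-1}}$, with $K(0)=1$ and $K\rb{\vep^{-1}}=\fracslash{\vep}{\rb{1+\vep}}$; meanwhile, since $1-\vep c>0$ on $\rb{0,\vep^{-1}}$, the factor $\rb{1-\vep c}^{-1/2}-1$ tends to $0$ as $c\to 0^+$ and to $+\infty$ as $c\to\rb{\vep^{-1}}^-$. Multiplying the two factors (and dividing by $\vep$) gives $\xi(c)\to 0$ and $\xi(c)\to\infty$, respectively.

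The substantive point is the sign of $\xi'(c)$. I would introduce $s:=1-\vep c\in(0,1)$, so that $1+c=\fracslash{\rb{\vep+1-s}}{\vep}$, differentiate
\[
    \xi(c)=\frac{1}{\vep\rb{1+c}}\rb{s^{-1/2}-1},
\]
and clear the positive factors $s^{-3/2}$ and $\rb{1+c}^{-2}$ that appear. This reduces the inequality $\xi'(c)>0$ to showing that
\[
    s^{3/2}-\tfrac{3}{2}s+\tfrac{1}{2}\rb{\vep+1}>0 \mymbox{for all} s\in(0,1).
\]
Writing the left-hand side as $h(s)+\fracslash{\vep}{2}$ with $h(s):=s^{3/2}-\tfrac{3}{2}s+\tfrac{1}{2}$, one checks that $h'(s)=\tfrac{3}{2}\rb{\sqrt{s}-1}\le 0$ on $\rb{0,1}$ and $h(1)=0$, so $h\ge 0$ there and the left-hand side is at least $\fracslash{\vep}{2}>0$. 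I expect this to be the main obstacle: one has to hit upon the substitution that keeps the algebra linear, and one has to notice that $h$ vanishes exactly at $s=1$ (equivalently $c=0$), so that strict positivity is supplied entirely by the $\fracslash{\vep}{2}$ term --- i.e.\ the monotonicity genuinely uses $\vep>0$.

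Part~(b) then follows quickly. On the open interval $\rb{0,\vep^{-1}}$ the maps $c\mapsto\fracslash{1}{\rb{1+c}}$ and $c\mapsto\rb{1-\vep c}^{-1/2}$ are analytic, their singular points $c=-1$ and $c=\vep^{-1}$ lying outside the interval; hence $\xi$, obtained from these by multiplication and subtraction, is analytic on $\rb{0,\vep^{-1}}$. By part~(a), $\xi$ is strictly increasing with $\xi\rb{0^+}=0$ and $\xi\rb{\rb{\vep^{-1}}^-}=\infty$, so it is a bijection of $\rb{0,\vep^{-1}}$ onto $(0,\infty)$; and since $\xi'$ never vanishes, the analytic inverse function theorem yields that $\xi^{-1}$ is analytic on all of $(0,\infty)$.
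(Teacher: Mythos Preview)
Your proof is correct and complete. The paper itself omits the proof entirely, stating only that it ``is routine, tedious, and omitted,'' so there is no approach to compare against; you have supplied precisely the routine computation the authors chose not to write out, and your substitution $s=1-\vep c$ is a clean way to handle the derivative sign.
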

\end{samepage}

\begin{proof}
The proof is routine, tedious, and omitted.
\end{proof}

\begin{samepage}
\begin{proposition} \label{prop0016}
Let ${c\in\rb{0,\vep^{-1}}}$ and ${w(x):=F(x,c)}$.
\begin{enumerate}[(a)]
    \item
        The isocline $w$ satisfies
        \[
            w'(x)
            \begin{cases}
                < f(x,w(x)), \mymbox{if} 0 < x < \xi(c) \\
                = f(x,w(x)), \mymbox{if} x = \xi(c) \\
                > f(x,w(x)), \mymbox{if} x > \xi(c)
            \end{cases}.
        \]
    \item
        The slow manifold satisfies
        \[
            w(x) < \cM(x) < \alpha(x)
            \mymbox{for all}
            x > \xi(c).
        \]
\end{enumerate}
\end{proposition}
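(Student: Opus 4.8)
The plan is to prove part~(a) by direct computation and part~(b) by combining part~(a) with the adapted Antifunnel Theorem and a comparison argument, exactly as was done in the proof of Theorem~\ref{c10t-001}(b).

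For part~(a), I would start from the differential equation \eqref{eq11.008} satisfied by every isocline, namely ${x^2 w' + w(\vep w - 2x) = 0}$, so that ${w'(x) = w(2x - \vep w)/x^2}$. On the other hand, since ${w(x) = F(x,c)}$, we have ${f(x,w(x)) = c}$ identically in $x$. Hence the sign of ${f(x,w(x)) - w'(x)}$ is the sign of ${c - w(x)(2x - \vep w(x))/x^2}$, i.e. the sign of ${c x^2 - w(2x - \vep w)}$. Substituting ${w = x^2/(K(c) + \vep x)}$ and clearing the positive denominator ${(K(c)+\vep x)^2}$, this reduces to the sign of a quadratic (in fact, after cancelling a factor of $x^2$, a quadratic in $x$ with coefficients depending on $c$ and $\vep$). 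Solving that quadratic for its positive root gives precisely ${x = \xi(c)}$ as in \eqref{eq11.010}; the discriminant condition is exactly ${1 - \vep c > 0}$, which holds on ${(0,\vep^{-1})}$. The leading coefficient of the quadratic should be negative (consistent with ${w'(x) > f(x,w(x))}$ for large $x$, since $w'\to\vep^{-1}$ while ${f(x,w(x)) = c < \vep^{-1}}$), which pins down the three-way sign alternation: ${<}$ for ${0 < x < \xi(c)}$, equality at ${x = \xi(c)}$, and ${>}$ for ${x > \xi(c)}$. This is the routine-but-central computation, and I expect the algebra of extracting $\xi(c)$ from the quadratic to be the main bookkeeping obstacle, though it is mechanical.

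For part~(b), part~(a) says that on the interval ${I = (0, \xi(c))}$ the isocline $w$ is a strong lower fence for \eqref{eq11.002}, while $H$ remains a strong upper fence there (as ${f(x,H(x)) = 0 < H'(x)}$), and ${H(x) < w(x)}$ for ${x > 0}$ since ${0 < c}$. So the region between $H$ and $w$ is an antifunnel on ${(0,\xi(c))}$, and by the argument used for $\Gamma_1$ in Theorem~\ref{c10t-001}(a) (the same estimate ${\partial f/\partial y > \vep^2}$ holds wherever ${H(x) \le y < V(x)}$, and $w$ lies in that strip), the slow manifold $\cM$ is the unique solution trapped between $H$ and $w$ on ${(0,\xi(c))}$. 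Since $\cM$ lies in $\Gamma_1$, it lies below $\alpha$, hence strictly below $w$ wherever $w$ is defined above $\alpha$; but at ${x = \xi(c)}$ the three curves $H$, $\alpha$, $w$ are all defined and ${H < \cM < \alpha < w}$ there is not yet what we want — rather, the point is that $\cM$ cannot cross $w$ from below for ${x > \xi(c)}$, because for ${x > \xi(c)}$ the isocline $w$ becomes a strong \emph{upper} fence-like barrier in reverse: any solution touching $w$ from below at some ${x_0 > \xi(c)}$ would have slope ${\ge}$... I would instead argue as follows. On ${(0,\xi(c))}$ we know ${\cM(x) < w(x)}$ because $\cM$ is the unique solution in the antifunnel between $H$ and $w$ and $\alpha < w$ on that interval. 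Suppose ${\cM(x_0) = w(x_0)}$ for some ${x_0 \ge \xi(c)}$, taking $x_0$ minimal. Then ${\cM'(x_0) = f(x_0, w(x_0)) \le w'(x_0)}$ by part~(a) (with equality only at ${x_0 = \xi(c)}$), so $\cM$ does not cross above $w$ there; a standard comparison/uniqueness argument then forces ${\cM(x) < w(x)}$ for all ${x > \xi(c)}$ (the case ${x_0 = \xi(c)}$ needs the strict inequality slightly past $\xi(c)$, obtained from a second-order comparison or directly from part~(a) just to the right of $\xi(c)$, where ${w' > f(x,w)}$ so $w$ is moving away from any solution lying just below it). Combined with ${H(x) < \cM(x)}$ and ${\cM(x) < \alpha(x)}$ from Theorem~\ref{c10t-001}, and the fact that $\alpha \le w$ wherever both are relevant, we get ${w(x) < \cM(x) < \alpha(x)}$ — wait, the stated inequality is ${w(x) < \cM(x) < \alpha(x)}$ for ${x > \xi(c)}$, so in fact $w$ lies \emph{below} $\cM$ for ${x > \xi(c)}$. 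I would therefore recheck orientation: for ${x > \xi(c)}$ the isocline $w$ (slope ${c < \vep^{-1}}$) must lie below $\alpha$ (slope $\vep^{-1}$), since among isoclines ${F(x,c)}$ is increasing in $c$ for fixed large $x$... actually ${F(x,c) = x^2/(K(c)+\vep x)}$ and $K$ is decreasing in $c$ on ${c > -1}$, so $F$ is increasing in $c$; hence for ${c < \vep^{-1}}$ we have ${w(x) = F(x,c) < F(x,\vep^{-1}) = \alpha(x)}$ for \emph{all} ${x > 0}$, not just ${x > \xi(c)}$. So the content of~(b) is the lower bound ${w(x) < \cM(x)}$ for ${x > \xi(c)}$, i.e. that $\cM$ has escaped above $w$ once past the crossing point.

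Thus the real argument for~(b) is: on ${(0,\xi(c))}$, $w$ is a strong lower fence and $H$ a strong upper fence, so by the Antifunnel Theorem (applied as in Theorem~\ref{c10t-001}) the unique solution in that antifunnel is $\cM$, giving ${w(x) < \cM(x) < H(x)}$... no — a lower fence lies below, so between $H$ (upper) and $w$ (lower) with ${w < H}$?? But ${H(x) < w(x)}$ since ${c > 0}$, contradicting ``antifunnel''. The resolution is that on ${(0,\xi(c))}$ the roles are: $w$ is a strong \emph{lower} fence and $\alpha$ (or $H$) sit below it, so actually $w$ bounds solutions from \emph{below}, meaning solutions starting below $w$ stay below; and $\cM$ does start below $w$ near ${x = 0}$ (since ${\cM < \alpha < w}$). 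Past ${x = \xi(c)}$, part~(a) makes $w$ a strong lower fence in the reversed sense — a strong \emph{upper} fence for the time-reversed flow — equivalently, for ${x > \xi(c)}$ a solution lying just below $w$ is pushed further below, so $\cM$ can never reach $w$ from below; hence ${\cM(x) < w(x)}$ for all ${x > 0}$, which is the \emph{opposite} of the claim. I will trust the paper's statement and conclude instead that the correct reading uses $w$ as a lower fence on ${(\xi(c),\infty)}$: by part~(a), for ${x > \xi(c)}$ we have ${w'(x) > f(x,w(x))}$, so $w$ is a strong \emph{upper} fence there, whence any solution — in particular $\cM$ — lying below $w$ at some ${x_1 > \xi(c)}$ stays below for all larger $x$; so to get ${\cM > w}$ for ${x > \xi(c)}$ we must show $\cM$ is above $w$ just past $\xi(c)$, and this follows because at ${x = \xi(c)}$ we compare $\cM$ with $\alpha$: hmm. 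Given the sign subtleties, the cleanest route — and the one I would commit to — is to mirror the last paragraph of the proof of Theorem~\ref{c10t-001}(b) verbatim: $w$ is a strong lower fence on ${(\xi(c),\infty)}$ only in the reversed sense, so instead one shows $\cM$ must already satisfy ${\cM(x) > w(x)}$ for ${x}$ slightly larger than $\xi(c)$ by using that on ${(0,\xi(c))}$, $w$ is a strong lower fence and $\cM$ the unique trapped solution between $H$ and $w$, forcing ${\cM \to w}$-tangency structure at $\xi(c)$ from part~(a)'s equality case, and then a comparison argument using ${\cM'(x) = f(x,\cM(x))}$ against ${w'(x) < f(x,w(x))}$ just left of $\xi(c)$ and the behaviour of $f(\cdot,\cdot)$ propagates the strict inequality ${w < \cM < \alpha}$ to all ${x > \xi(c)}$. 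The main obstacle, then, is bookkeeping the fence orientations correctly around the crossing point ${x = \xi(c)}$ and invoking the already-proved uniqueness of $\cM$ in nested antifunnels to anchor the comparison; once that is set up, everything else is the same one-line comparison estimate used for $\Gamma_1$.
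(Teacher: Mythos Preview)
Your treatment of part~(a) is correct and matches the paper's approach: both reduce ${w'(x)=c}$ to a quadratic in $x$ whose unique positive root is $\xi(c)$, with the sign on either side determined by a routine check (you invoke the limit ${w'(x)\to\vep^{-1}>c}$; the paper just says ``routine'').

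Part~(b), however, never arrives at a valid argument. You repeatedly reverse the fence orientations and the relative positions of $w$, $\alpha$, and $\cM$, and you try to invoke the Antifunnel Theorem on the bounded interval $(0,\xi(c))$, where Theorem~\ref{thmD.003} does not apply. The paper's one line ``It follows from the Antifunnel Theorem'' is meant to be read entirely on the \emph{unbounded} interval $(\xi(c),\infty)$. There, by part~(a), $w$ is a strong \emph{upper} fence (since ${w'(x)>c=f(x,w(x))}$), $\alpha$ is still a strong lower fence, and ${w(x)<\alpha(x)}$ for all ${x>0}$ because $K$ is decreasing. Hence $\setb{(x,y):x>\xi(c),~w(x)\le y\le\alpha(x)}$ is an antifunnel, and the same bound ${\partial f/\partial y>\vep^2}$ used in Theorem~\ref{c10t-001} yields a unique solution inside it. That solution also lies in the larger antifunnel $\setb{(x,y):x>\xi(c),~H(x)\le y\le\alpha(x)}$, which by the same theorem contains a unique solution --- necessarily $\cM$ restricted to $(\xi(c),\infty)$. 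Therefore $\cM$ is the solution in the smaller antifunnel, giving ${w(x)<\cM(x)<\alpha(x)}$ for all ${x>\xi(c)}$. No comparison at the boundary point $\xi(c)$, no backward extension, and no second-order tangency analysis is needed.
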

\end{samepage}

\needspace{2.0cm}
\begin{proof}
\begin{enumerate}[(a)]
    \item
        Note that ${f(x,w(x))=c}$ for all ${x>0}$. If we set
        ${w'(x)=c}$, we obtain
        \[
            \vep \rb{ 1 - \vep c } x^2 + 2 K(c) \rb{ 1 - \vep c } x - c K(c)^2 = 0.
        \]
        This has two roots, one negative and one positive. The
        positive root is given by ${x=\xi(c)}$ with $\xi(c)$ as in
        \eqref{eq11.010}. It is a routine matter to confirm that
        ${w'(x)<f(x,w(x))}$ when ${0<x<\xi(c)}$ and that
        ${w'(x)>f(x,w(x))}$ when ${x>\xi(c)}$.
    \item
        It follows from the Antifunnel Theorem.
\end{enumerate}
\end{proof}

\section{Concavity} \label{sec005}

In this section, we will establish the concavity of all scalar
solutions, except for the slow manifold, in the nonnegative
quadrant. The concavity of the slow manifold will be established
later. These results will be obtained by using an auxiliary
function. Moreover, we will construct a curve of inflection points
which approximates the slow manifold.

\subsection{Establishing Concavity}

Let $y$ be a solution to \eqref{eq11.002} and consider the function
$f$ given in \eqref{eq10.011}. If we differentiate
${y'(x)=f(x,y(x))}$ and apply the Chain Rule, we obtain
\begin{equation} \label{eq11.012}
    y''(x) = p(x,y(x)) h(x,y(x)),
\end{equation}
where
\begin{equation} \label{eq11.013}
    p(x,y) := \frac{1}{ x^2 \rb{ \vep y - x }^{2} }
    \mymbox{and}
    h(x,y) := x^2 f(x,y) + y \rb{ \vep y - 2 x }.
\end{equation}

The function $p(x,y)$ is positive everywhere except along the
vertical isocline and for ${x=0}$, where it is undefined. We will be
considering the functions $h(x,y)$ and $p(x,y)$ along a given
solution $y(x)$ so we will abuse notation by writing
${h(x):=h(x,y(x))}$ and ${p(x):=p(x,y(x))}$.

For a given ${x>0}$ with ${y(x) \ne V(x)}$, it follows from
\eqref{eq11.012} and the fact that ${p(x)>0}$ that the sign of
$h(x)$ is the same as the sign of $y''(x)$. Furthermore, if we
differentiate $h(x)$ with respect to $x$ and apply \eqref{eq11.012},
we see that the function $h$ has derivative
\begin{equation} \label{eq11.014}
    h'(x) = x^2 p(x) h(x) + 2 y(x) \sqb{ \vep y'(x) - 1 }.
\end{equation}

\begin{remark}
The function $h$ cannot tell us anything about the concavity of
solutions at ${x=0}$, not even by taking a limit.
\end{remark}

\begin{claim} \label{claim0006}
Let $y$ be a solution to \eqref{eq11.002} and let ${x_0>0}$ with
${y(x_0) \ne V(x_0)}$. Consider the isocline through the point
$(x_0,y(x_0))$, which is given by ${w(x):=F(x,y'(x_0))}$. Then,
\[
    h(x_0) = x_0 \sqb{ y'(x_0) - w'(x_0) }.
\]
Furthermore,
\[
    y''(x_0) > 0 \Longleftrightarrow y'(x_0) > w'(x_0)
    \mymbox{and}
    y''(x_0) < 0 \Longleftrightarrow y'(x_0) < w'(x_0).
\]
\end{claim}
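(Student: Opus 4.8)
The plan is to prove the identity $h(x_0) = x_0\sqb{y'(x_0) - w'(x_0)}$ by direct computation, and then read off the equivalences as an immediate consequence of the relation \eqref{eq11.012} together with the positivity of $p(x)$. The starting point is the definition \eqref{eq11.013}, namely $h(x_0) = x_0^2 f(x_0, y(x_0)) + y(x_0)\rb{\vep y(x_0) - 2 x_0}$. Since $y$ is a solution of \eqref{eq11.002}, we have $f(x_0, y(x_0)) = y'(x_0)$, so $h(x_0) = x_0^2 y'(x_0) + y(x_0)\rb{\vep y(x_0) - 2 x_0}$. The task is therefore to show that $y(x_0)\rb{\vep y(x_0) - 2 x_0} = -x_0^2 w'(x_0) - x_0 w'(x_0) \cdot 0$… more precisely, to show $x_0^2 w'(x_0) + y(x_0)\rb{\vep y(x_0) - 2 x_0} = x_0 w'(x_0) \cdot (\text{something})$; the cleanest route is via the isocline differential equation \eqref{eq11.008}.

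The key observation is that the isocline $w(x) := F(x, y'(x_0))$ passes through the point $(x_0, y(x_0))$ — indeed, by construction $f(x_0, y(x_0)) = y'(x_0)$ means $y(x_0)$ lies on the isocline for slope $y'(x_0)$, so $w(x_0) = y(x_0)$. Now apply the isocline ODE \eqref{eq11.008} at $x = x_0$: $x_0^2 w'(x_0) + w(x_0)\rb{\vep w(x_0) - 2 x_0} = 0$. Substituting $w(x_0) = y(x_0)$ gives $y(x_0)\rb{\vep y(x_0) - 2 x_0} = -x_0^2 w'(x_0)$. Plugging this into the expression for $h(x_0)$ obtained above yields
\[
    h(x_0) = x_0^2 y'(x_0) - x_0^2 w'(x_0) = x_0^2 \sqb{ y'(x_0) - w'(x_0) }.
\]
This is off from the claimed identity by a factor of $x_0$, so I would re-examine the normalization — most likely the claim as stated absorbs one factor of $x_0$ differently, or there is a typo and the intended identity is $h(x_0) = x_0^2\sqb{y'(x_0) - w'(x_0)}$; in any case the sign of $h(x_0)$ agrees with the sign of $y'(x_0) - w'(x_0)$ since $x_0 > 0$, which is all that the equivalences require.

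For the equivalences: by \eqref{eq11.012}, $y''(x_0) = p(x_0) h(x_0)$, and since $y(x_0) \ne V(x_0)$ and $x_0 > 0$ we have $p(x_0) > 0$ (as noted after \eqref{eq11.013}). Hence $\sgn{y''(x_0)} = \sgn{h(x_0)} = \sgn{y'(x_0) - w'(x_0)}$, which gives both stated biconditionals at once. I do not anticipate a serious obstacle here; the only point requiring care is verifying that $w(x_0) = y(x_0)$ — i.e., that the isocline for slope $y'(x_0)$ genuinely passes through $(x_0, y(x_0))$ — and confirming that the solution point is not on the vertical isocline so that $w = F(\cdot, y'(x_0))$ is well-defined in the form \eqref{eq11.006} (which needs $y'(x_0) \ne -1$, guaranteed since $y(x_0) \ne V(x_0)$ keeps the slope finite, and $K(y'(x_0))$ finite). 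The identity \eqref{eq11.008} then does all the work.
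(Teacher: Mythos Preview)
Your approach is exactly the one the paper uses: the paper's own proof simply says ``The first part follows from \eqref{eq11.008} and \eqref{eq11.013}. The second part follows from the first.'' Your computation is correct, and so is your suspicion about the factor of $x_0$: substituting $w(x_0)=y(x_0)$ into \eqref{eq11.008} gives $y(x_0)\rb{\vep y(x_0)-2x_0}=-x_0^2 w'(x_0)$, and hence $h(x_0)=x_0^2\sqb{y'(x_0)-w'(x_0)}$, not $x_0\sqb{y'(x_0)-w'(x_0)}$. The stated identity in the claim appears to carry a typo (a missing power of $x_0$); since $x_0>0$, this has no effect on the sign equivalences, which is all that is used downstream.
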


\begin{proof}
The first part follows from \eqref{eq11.008} and \eqref{eq11.013}.
The second part follows from the first.
\end{proof}

The concavity of all solutions in all regions of the nonnegative
quadrant can be deduced using the auxiliary function $h$ and the
following easy-to-verify lemma. Table~\ref{tab11.001} summarizes
what we will develop in this section.

\begin{lemma} \label{lem4.001}
Let $I$ be one of the intervals $[a,b]$, $(a,b)$, $[a,b)$, and
$(a,b]$. Suppose that ${\phi \in C(I)}$ is a function having at
least one zero in $I$.
\begin{enumerate}[(a)]
    \item
        If ${I=(a,b]}$ or ${I=[a,b]}$, then the function $\phi$ has
        a right-most zero in $I$.  Likewise, if ${I=[a,b)}$ or
        ${I=[a,b]}$, then the function $\phi$ has a left-most zero in $I$.
    \item
        If ${\phi\in\Cr{1}{I}}$ and ${\phi'(x)>0}$ for every zero
        of $\phi$ in $I$, then $\phi$ has exactly one zero in $I$.
\end{enumerate}
\end{lemma}

\begin{table}[t]
\begin{center}
\begin{tabular}{|c|c|}
    \hline
    Region & Concavity of Solutions \\ \hline\hline
    $ 0 \leq y \leq H $ & concave down \\ \hline
    $ H < y < \cM $ & concave up, then inflection point, then concave down \\ \hline
    $ \cM < y < V $ & concave up \\ \hline
    $ y > V $ & concave up, then inflection point, then concave down \\ \hline
\end{tabular}
\caption{A summary of the concavity of solutions of \eqref{eq11.002}
in the nonnegative quadrant.} \label{tab11.001}
\end{center}
\end{table}

\begin{proposition}
Let $y$ be a solution to \eqref{eq11.002} lying below $H$ with
domain $[a,b]$, where ${0<a<b}$, ${y(a)=H(a)}$, and ${y(b)=0}$.
Then, $y$ is concave down on $[a,b]$.
\end{proposition}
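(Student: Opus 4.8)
The plan is to use the auxiliary function $h$ from \eqref{eq11.012}--\eqref{eq11.013} together with Lemma~\ref{lem4.001}(b). Since $p(x)>0$ for all $x>0$ and the solution stays below $V$ on $[a,b]$ (indeed $0\le y(x)<H(x)<V(x)$ for $x\in(a,b)$, with the endpoint $y(a)=H(a)$ still below $V(a)$), the sign of $y''(x)$ equals the sign of $h(x)$ on $[a,b]$. So it suffices to prove $h(x)\le 0$ on $[a,b]$, and in fact $h(x)<0$ on the interior. First I would handle the two endpoints: at $x=a$ the solution lies on the horizontal isocline, where the isocline through $(a,y(a))=(a,H(a))$ for slope $y'(a)=0$ is $w=H$ itself, so Claim~\ref{claim0006} gives $h(a)=a[\,y'(a)-H'(a)\,]=a[\,0-H'(a)\,]<0$ since $H'>0$. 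At $x=b$ the solution meets the axis $y=0$; there $y(b)=0$ and $f(b,0)=b^2/(-b^2)=-1$, so $h(b)=b^2\cdot(-1)+0=-b^2<0$. Thus $h(a)<0$ and $h(b)<0$.

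Next I would rule out any interior zero of $h$. Suppose $h(x_0)=0$ for some $x_0\in(a,b)$; then by \eqref{eq11.014}, since the first term $x_0^2p(x_0)h(x_0)$ vanishes, we get
\[
    h'(x_0)=2y(x_0)\sqb{\vep y'(x_0)-1}.
\]
On the interior, $0<y(x_0)<H(x_0)$, and because the solution lies strictly between the lines $y=0$ and $y=H$, the isocline structure (the remark following \eqref{eq11.008}, and Remark~(i) in \S\ref{sec003}) gives $-1<y'(x_0)<0$; in particular $\vep y'(x_0)-1<-1<0$. Since $y(x_0)>0$ as well, we conclude $h'(x_0)<0$. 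By Lemma~\ref{lem4.001}(b) applied to $-h$ on $[a,b]$ (so that the hypothesis becomes ``$(-h)'>0$ at every zero''), $h$ has at most one zero in $[a,b]$; but $h$ is continuous with $h(a)<0$ and $h(b)<0$, so if it had exactly one zero it would have to be a turning point there, contradicting $h'<0$ at that zero. Hence $h$ has no zero in $[a,b]$ at all, and being negative at the endpoints it is negative throughout. Therefore $y''(x)<0$ on $(a,b)$, and by continuity of $y''$ (which holds since $y$ solves \eqref{eq11.002} away from $V$) $y$ is concave down on $[a,b]$.

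The only delicate point is the boundary behaviour at $x=a$: one must be sure the solution really does lie strictly below $H$ on the half-open interval $(a,b]$ so that the isocline bound $-1<y'<0$ is available there, and that $y(x)>0$ on $(a,b)$ so the factor $y(x_0)$ in $h'(x_0)$ is genuinely positive. The first follows from the fence property $f(x,H(x))=0<H'(x)$ (so $H$ is a strong upper fence and the solution, once strictly below it near $b$, cannot touch it again except at $a$), and the second from the fact that $y=0$ is itself a solution, so $y(x)>0$ wherever it is defined in the interior. I expect the main obstacle to be nothing more than assembling these elementary fence/uniqueness facts carefully; the core computation via $h$, $h'$, and Lemma~\ref{lem4.001}(b) is short.
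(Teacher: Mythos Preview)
Your argument is correct in outline and reaches the right conclusion, but it is considerably more elaborate than what is needed here. The paper's proof is a one-line direct computation: since on $(a,b)$ we have $y'(x)<0$, $y(x)>0$, and $\vep y(x)-2x<\vep V(x)-2x=-x<0$, the expression $h(x)=x^{2}y'(x)+y(x)\bigl(\vep y(x)-2x\bigr)$ is a sum of two strictly negative terms, so $h(x)<0$ immediately. The endpoint values are handled the same way. Your route---endpoint evaluation via Claim~\ref{claim0006}, then ruling out interior zeros using \eqref{eq11.014} and Lemma~\ref{lem4.001}(b)---is exactly the machinery the paper reserves for the other regions (between $H$ and $\cM$, above $V$), where $h$ is not manifestly of one sign and a genuine zero-counting argument is required. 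So your method works but misses the shortcut available in this particular region.

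Two small slips worth fixing. First, $y=0$ is \emph{not} a solution of \eqref{eq11.002}: $f(x,0)=-1$, not $0$. The positivity $y(x)>0$ on $(a,b)$ is simply part of the hypothesis that the solution lies below $H$ with $y(b)=0$ being the right endpoint of the domain. Second, your use of Claim~\ref{claim0006} gives $h(a)=a^{2}\bigl[y'(a)-H'(a)\bigr]$ (the paper has a typo here with $x_0$ instead of $x_0^{2}$, but the sign is unaffected), so your conclusion $h(a)<0$ stands.
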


\begin{proof}
We know ${y'(a)=0}$ and ${y'(b)=-1}$. Also, ${y(x)>0}$ and
${y'(x)<0}$ for all ${x\in(a,b)}$. Note that ${y<V(x)}$ which
implies ${\vep y - 2 x < 0}$ for all ${x\in[a,b]}$. Let $h$ be as in
\eqref{eq11.013} defined with respect to the solution $y$. Observe
that
\[
    h(a) = H(a) \sqb{ \vep H(a) - 2a } < 0
    \mymbox{and}
    h(b) = -b^2 < 0.
\]
Observe also that
\[
    h(x) = x^2 y'(x) + y(x) \sqb{ \vep y(x) - 2 x } < 0
    \mymbox{for all}
    x \in (a,b).
\]
Therefore, $y$ is concave down on $[a,b]$.
\end{proof}

\begin{proposition}
Let $y$ be a solution to \eqref{eq11.002} lying above $H$ and below
$\cM$ with domain $(0,a]$, where ${a>0}$ and ${y(a)=H(a)}$. Then,
there is a unique ${x_1\in(0,a)}$ such that ${y''(x_1)=0}$.
Moreover, $y$ is concave up on $(0,x_1)$ and concave down on
$(x_1,a]$.
\end{proposition}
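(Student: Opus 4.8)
The plan is to detect the concavity of $y$ through the auxiliary function $h$ of \eqref{eq11.013}. Since $y(x)<\cM(x)<V(x)$ on $(0,a]$, the factor $p(x)$ is positive there, so by \eqref{eq11.012} the sign of $y''(x)$ equals the sign of $h(x)$; it therefore suffices to show that $h$ has exactly one zero $x_1$ in $(0,a)$, with $h>0$ on $(0,x_1)$ and $h<0$ on $(x_1,a]$. I will use repeatedly that $H(x)<y(x)<\cM(x)\leq\alpha(x)$ on $(0,a)$ with $y(a)=H(a)$, so that — comparing $y'(x)=f(x,y(x))$ with the isoclines $H$ and $\alpha$ via $\pderiv{f}{y}>0$ — we have $0<y'(x)<\vep^{-1}$ on $(0,a)$ and $y'(a)=f(a,H(a))=0$.

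\emph{Anchoring the sign of $h$ at the endpoints.} At $x=a$ we get $h(a)=a^2\cdot 0+H(a)\rb{\vep H(a)-2a}<0$, because $H(a)<V(a)=\sfrac{a}{\vep}$ forces $\vep H(a)<a<2a$. Near $x=0$ I claim that $h$ is positive at points arbitrarily close to $0$. If not, then $h\leq 0$, hence $y''\leq 0$, on some $(0,\delta)$, so $y'$ is non-increasing there; consequently $L:=\lim_{x\to 0^+}y'(x)$ exists, lies in $(0,\vep^{-1}]$ (it equals $\sup_{(0,\delta)}y'>0$ since $y'>0$), and the Mean Value Theorem gives $\sfrac{y(x)}{x}=y'(c_x)$ with $c_x\in(0,x)$, so $\sfrac{y(x)}{x}\to L>0$, contradicting $\sfrac{y(x)}{x}\to 0$ from Proposition~\ref{c10t-002}. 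Fix $\epsilon\in(0,a)$ with $h(\epsilon)>0$.

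\emph{Locating the unique zero.} The decisive fact is that $h'<0$ at every zero $x_*$ of $h$ in $(0,a)$: there \eqref{eq11.014} reduces to $h'(x_*)=2y(x_*)\sqb{\vep y'(x_*)-1}$, which is negative since $y(x_*)>0$ and $y'(x_*)<\vep^{-1}$. First, this forbids any zero of $h$ in $(0,\epsilon]$: a right-most one would exist by Lemma~\ref{lem4.001}(a) and would have $h\geq 0$ just to its right (as $h>0$ at $\epsilon$), hence $h'\geq 0$ there, a contradiction; so $h>0$ on $(0,\epsilon]$. Second, Lemma~\ref{lem4.001}(b) applied to $\phi:=-h$ on $[\epsilon,a]$ — where $\phi$ has a zero since $\phi(\epsilon)<0<\phi(a)$, and $\phi'=-h'>0$ at every zero of $\phi$ — gives a unique zero $x_1\in(\epsilon,a)$. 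Hence $h>0$ on $(0,x_1)$, $h(x_1)=0$, $h<0$ on $(x_1,a]$, and multiplying by $p>0$ gives $y''>0$ on $(0,x_1)$, $y''(x_1)=0$, $y''<0$ on $(x_1,a]$, with $x_1$ the only zero of $y''$.

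\emph{Main obstacle.} The subtle point is the sign of $h$ near the origin, where — as the remark preceding Claim~\ref{claim0006} warns — neither $h(0)$ nor $\lim_{x\to 0^+}h(x)$ is informative; the argument above circumvents this using only the qualitative facts $y(x)/x\to 0$, $y>0$, and $y'<\vep^{-1}$, rather than any asymptotic expansion of $y$. Everything else is routine bookkeeping with Lemma~\ref{lem4.001} once $h(a)<0$ and the sign of $h'$ at the zeros of $h$ are in hand.
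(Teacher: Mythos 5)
Your proof is correct, and its core is the same as the paper's: the sign of $y''$ is read off from the auxiliary function $h$, and uniqueness of the zero follows because \eqref{eq11.014} forces $h'<0$ at any zero (using $0<y'<\vep^{-1}$ from the isocline structure, since $H<y<\cM<\alpha$), so Lemma~\ref{lem4.001} applies. The one genuine difference is the existence step. The paper gets it in one line from Rolle's Theorem applied to $y'$ on $[0,a]$, using the continuous extension $y'(0)=0=y'(a)$ from Proposition~\ref{c10t-002}; you instead prove that $h>0$ at points arbitrarily close to $0$ (by contradiction: if $h\leq 0$ near $0$ then $y'$ would be non-increasing there, forcing $y(x)/x\to L>0$ via the Mean Value Theorem, against $y(x)/x\to 0$), and then invoke the Intermediate Value Theorem against $h(a)<0$. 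Your route is longer but has a small payoff: it establishes the sign of $h$ on $(0,x_1)$ directly, whereas the paper's closing sentence (``by continuity we can conclude that $h(x)>0$ on $(0,x_1)$'') tacitly relies on $h'(x_1)<0$ to fix the sign to the left of $x_1$. Both arguments ultimately lean on Proposition~\ref{c10t-002} for the behaviour at the origin, which is unavoidable given that $h$ itself is uninformative there.
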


\begin{proof}
Let $h$ be as in \eqref{eq11.013} defined with respect to the
solution $y$.  Now, we know ${y'(a)=0}$ and, by
Proposition~\ref{c10t-002}, we can extend $y'(x)$ continuously and
write ${y'(0)=0}$. By Rolle's Theorem, there is an ${x_1 \in (0,a)}$
such that ${y''(x_1)=0}$ and hence ${h(x_1)=0}$. To show the
uniqueness of $x_1$, suppose that ${x_2\in(0,a)}$ is such that
${h(x_2)=0}$. Now, since ${H(x_2)<y(x_2)<\alpha(x_2)}$, by virtue of
the isocline structure ${0<y'(x_2)<\vep^{-1}}$. Moreover, we see
from \eqref{eq11.014} that ${h'(x_2)<0}$. By Lemma~\ref{lem4.001},
we can conclude ${x_2=x_1}$. Finally, by continuity we can conclude
that ${h(x)>0}$ on $(0,x_1)$ and ${h(x)<0}$ on $(x_1,a]$ since
${h(a) = y(a) \sqb{ \vep y(a) - 2 a } < 0}$.
\end{proof}

\begin{proposition}
Let $y$ be a solution to \eqref{eq11.002} strictly between $\cM$ and
$V$ with domain $(0,a)$, where ${a>0}$ and ${y(a-)=V(a)}$. Then, $y$
is concave up on $(0,a)$.
\end{proposition}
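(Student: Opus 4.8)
The plan is to run the same machinery as in the preceding propositions: work with the auxiliary function $h(x):=h(x,y(x))$ from \eqref{eq11.013} along the solution $y$, and show $h(x)>0$ for every $x\in(0,a)$, which by \eqref{eq11.012} is exactly concavity up. On $(0,a)$ we have $x>0$ and $y(x)<V(x)$, so $p(x)>0$ there; moreover $y$ lies above $\cM\geq H$ and below $V$, so its graph is in the interior of $\Gamma_0$ and Remark~(i) of \S\ref{sec003} gives $0<y'(x)<\infty$ on $(0,a)$. By \eqref{eq11.012} the sign of $y''(x)$ is the sign of $h(x)$, so everything reduces to showing $h>0$ on $(0,a)$.

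First I would pin down $h$ near the right endpoint, which is where the hypothesis $y(a-)=V(a)$ enters. Writing $y'(x)=f(x,y(x))$ with $f$ as in \eqref{eq10.011} and noting the denominator equals $-x^2+\vep x y(x)=\vep x\rb{y(x)-V(x)}$, I would check that as $x\to a^-$ the numerator tends to $a^2-\rb{1+\vep a}V(a)=-a/\vep<0$ while the denominator tends to $0^-$ (since $y<V$ on $(0,a)$ and $y(a-)=V(a)$); hence $y'(x)\to+\infty$, and therefore $h(x)=x^2y'(x)+y(x)\rb{\vep y(x)-2x}\to+\infty$ because the remaining terms stay bounded. In particular $h>0$ on some $(a-\delta,a)$.

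The crux is the sign of $h'$ at a zero of $h$. Suppose $x_0\in(0,a)$ with $h(x_0)=0$. Since $y(x_0)\neq V(x_0)$, Claim~\ref{claim0006} applies to the isocline $w:=F\rb{\cdot,y'(x_0)}$ through $\rb{x_0,y(x_0)}$ and gives $h(x_0)=x_0\sqb{y'(x_0)-w'(x_0)}$, so $y'(x_0)=w'(x_0)$. But $c:=y'(x_0)\in(0,\infty)$, so $K(c)>0$ and this isocline is concave up on its domain (which contains $x_0$) with slope tending to $\vep^{-1}$ as $x\to\infty$ by \eqref{eq11.007}; hence $w'(x_0)<\vep^{-1}$, so $y'(x_0)<\vep^{-1}$. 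Substituting $h(x_0)=0$ into \eqref{eq11.014} gives $h'(x_0)=2y(x_0)\sqb{\vep y'(x_0)-1}<0$ since $y(x_0)>H(x_0)>0$. Thus $h$ is strictly decreasing at each of its zeros in $(0,a)$, so by Lemma~\ref{lem4.001}(b) applied to $-h$ (which is $C^1$ on $(0,a)$ by \eqref{eq11.014}) it has at most one zero there. If it had a zero $x_0$, then $h<0$ just to the right of $x_0$, which together with $h>0$ on $(a-\delta,a)$ forces a second zero in $(x_0,a)$ by the Intermediate Value Theorem --- a contradiction. Hence $h$ has no zero in $(0,a)$; being continuous and positive near $a$, it is positive throughout $(0,a)$, so $y''=ph>0$ there and $y$ is concave up on $(0,a)$.

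I expect the two points needing real care to be the limit $h(x)\to+\infty$ as $x\to a^-$ --- the only place the endpoint hypothesis is used, where the indeterminate quotient in $f$ must be resolved --- and the observation behind the crux: at a zero of $h$, $y'(x_0)$ is the slope of a genuine isocline through that point, and every such isocline has slope strictly below its asymptotic value $\vep^{-1}$, which is precisely what forces $h'(x_0)<0$. The remainder is the routine zero-counting via Lemma~\ref{lem4.001} together with continuity.
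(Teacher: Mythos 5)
Your proof is correct, and it runs on the same machinery as the paper's (the auxiliary function $h$, Claim~\ref{claim0006}, the identity \eqref{eq11.014}, and zero-counting plus the Intermediate Value Theorem), but the organization is genuinely different. The paper splits $(0,a)$ into $I=\setb{x \st \cM(x)<y(x)<\alpha(x)}$ and $J=\setb{x \st \alpha(x)\leq y(x)<V(x)}$: on $J$ it gets $h>0$ directly from $y'\geq\vep^{-1}>w'$, then fixes a point $x_3\in J$ as the positivity anchor and rules out zeros in $I$ via a right-most-zero contradiction, using $y<\alpha$ to conclude $y'<\vep^{-1}$ and hence $h'<0$ at a zero. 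You avoid the decomposition entirely: your anchor is the blow-up $h(x)\to+\infty$ as $x\to a^-$ (which you verify correctly; the paper uses the hypothesis $y(a-)=V(a)$ only to conclude $J\ne\emptyset$), and your bound $y'(x_0)<\vep^{-1}$ at a zero of $h$ comes from the observation that $h(x_0)=0$ forces $y'(x_0)=w'(x_0)$ for a genuine isocline $w=F(\cdot,c)$ with $c=y'(x_0)>0$, whose slope is everywhere strictly below its limiting value $\vep^{-1}$ by concavity. This controls the sign of $h'$ at every zero uniformly, with no need to know on which side of $\alpha$ the zero lies, so one argument covers the whole interval; the paper's version buys the explicit statement that $h>0$ on all of $J$ without any contradiction. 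One cosmetic point: invoking Lemma~\ref{lem4.001}(b) for ``at most one zero'' is a slight detour, since your subsequent step (a zero, plus $h'<0$ there, plus $h>0$ near $a$, produces a second zero by the Intermediate Value Theorem) already delivers the contradiction directly; but nothing is wrong.
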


\begin{proof}
Let $h$ be as in \eqref{eq11.013} defined with respect to the
solution $y$. Define the sets
\[
    I := \setb{ ~ x \in (0,a) \st \cM(x) < y(x) < \alpha(x) ~ }
\]
and
\[
    J := \setb{ ~ x \in (0,a) \st \alpha(x) \leq y(x) < V(x) ~ }.
\]
Since since ${y(a-)=V(a)}$, we know that ${J\ne\emptyset}$. We will
show separately that ${h(x)>0}$ for each ${x \in I}$ and for each
${x \in J}$.

Suppose that ${x \in J}$. By virtue of the isocline structure,
${y'(x)\geq\vep^{-1}}$. However, any isocline $w$ satisfies
${w'(x)<\vep^{-1}}$.  Appealing to Claim~\ref{claim0006},
${h(x)>0}$.

If ${I=\emptyset}$ then we are done. Suppose that ${I\ne\emptyset}$
and let ${x_3 \in J}$ be fixed. We have already shown that
${h(x_3)>0}$. We need to show that ${h(x)>0}$ for all ${x \in I}$.
By continuity, it suffices to show that $h$ has no zeros in $I$.
Suppose, on the contrary, that this is not the case. By
Lemma~\ref{lem4.001}, $h$ has a right-most zero ${x_1 \in I \subset
(0,x_3]}$. Observe that ${y'(x_1)<\vep^{-1}}$, which follows from
the fact that ${y(x_1)<\alpha(x_1)}$. Since ${h(x_1)=0}$ and
${y(x_1)>0}$, \eqref{eq11.014} informs us ${h'(x_1)<0}$.
Consequently, there is an ${x_2\in(x_1,x_3)}$ such that
${h(x_2)<0}$. Since ${h(x_3)>0}$, the Intermediate Value Theorem
implies that $h$ has a zero in $(x_2,x_3)$, which is a contradiction
since $x_1$ is the right-most zero.
\end{proof}

\begin{proposition}
Let $y$ be a solution to \eqref{eq11.002} lying above $V$ with
domain $(0,a)$, where ${a>0}$, ${y(0+)=\infty}$, and ${y(a-)=V(a)}$.
Then, there is a unique ${x_1\in(0,a)}$ such that ${y''(x_1)=0}$.
Moreover, $y$ is concave up on $(0,x_1)$ and concave down on
$(x_1,a)$.
\end{proposition}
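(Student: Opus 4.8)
The plan is to repeat, for this last case, the mechanism already used in the three preceding propositions: work with the auxiliary function $h$ of \eqref{eq11.013} along the solution $y$ and pin down its zeros. Since $y(x)>V(x)$ for every $x\in(0,a)$ we have $y(x)\ne V(x)$, so $p(x)>0$ and, by \eqref{eq11.012}, $y''(x)$ has the same sign as $h(x)$ on all of $(0,a)$; in particular $y''(x)=0$ if and only if $h(x)=0$. The function $f$ is smooth on $(0,a)$ away from the vertical isocline, so $y\in\Cr{2}{(0,a)}$ and $h\in\Cr{1}{(0,a)}$. Thus the whole proposition reduces to showing that $h$ has exactly one zero $x_1$ in $(0,a)$, is positive on $(0,x_1)$, and is negative on $(x_1,a)$.

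The first step I would carry out is the boundary behaviour of $h$. Using the decomposition $f(x,y)=-1-\fracslash{y}{\rb{\vep xy-x^2}}$ from the proof of Theorem~\ref{thm0011} — legitimate here since $\vep xy-x^2=x\rb{\vep y-x}>0$ — one has $x^2y'(x)=-x^2-\fracslash{xy(x)}{\rb{\vep y(x)-x}}$. As $x\to0^+$ the right-hand side tends to $0$, since $\fracslash{x}{y(x)}\to0$ forces $\fracslash{xy(x)}{\rb{\vep y(x)-x}}=\fracslash{x}{\rb{\vep-\fracslash{x}{y(x)}}}\to0$; meanwhile $y(x)\rb{\vep y(x)-2x}\to+\infty$ because $y(x)\to\infty$ and $\vep y(x)-2x\to\infty$. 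Hence $h(x)\to+\infty$ as $x\to0^+$. As $x\to a^-$, $y(x)\to V(a)=\fracslash{a}{\vep}$ with $\vep y(x)-x\to0^+$, so $y'(x)\to-\infty$, $x^2y'(x)\to-\infty$, while $y(x)\rb{\vep y(x)-2x}\to-\fracslash{a^2}{\vep}$ stays finite; hence $h(x)\to-\infty$ as $x\to a^-$. By the Intermediate Value Theorem, $h$ has at least one zero in $(0,a)$.

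Uniqueness is the only step that needs a genuine argument. At any zero $x_*$ of $h$ in $(0,a)$, equation \eqref{eq11.014} reduces to $h'(x_*)=2y(x_*)\sqb{\vep y'(x_*)-1}$; since $y(x_*)>0$ and (as above) $y'(x_*)=f(x_*,y(x_*))<-1$, we have $\vep y'(x_*)-1<0$ and therefore $h'(x_*)<0$. Hence $-h\in\Cr{1}{(0,a)}$ has a zero in $(0,a)$ and strictly positive derivative at each of its zeros, so Lemma~\ref{lem4.001}(b) gives that $-h$, and thus $h$, has exactly one zero $x_1$ in $(0,a)$. Since $h$ is continuous and zero-free on each of $(0,x_1)$ and $(x_1,a)$ it keeps a constant sign there, and the boundary limits computed above force $h>0$ on $(0,x_1)$ and $h<0$ on $(x_1,a)$. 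Consequently $y''>0$ on $(0,x_1)$, $y''(x_1)=p(x_1)h(x_1)=0$, and $y''<0$ on $(x_1,a)$, which is the assertion.

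The single mildly delicate point I anticipate is the limit $h(x)\to+\infty$ as $x\to0^+$: one must verify that the $x^2y'(x)$ term stays bounded — indeed vanishes — at $x=0$ even though $y$ and $y'$ both blow up, so that it cannot cancel the dominant $\vep y(x)^2$ contribution. Everything else is a direct transcription of the concavity arguments already given for the earlier regions.
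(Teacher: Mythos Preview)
Your argument is correct, and the uniqueness step is essentially identical to the paper's: at any zero $x_*$ of $h$ one has $h'(x_*)=2y(x_*)\sqb{\vep y'(x_*)-1}<0$, so Lemma~\ref{lem4.001}(b) (applied to $-h$) forces the zero to be unique, and the local sign change then gives the concavity pattern.

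Where you diverge from the paper is in the existence step. The paper does not compute the boundary limits of $h$; instead it observes that $y'(x)\to-\infty$ at both endpoints (as $x\to0^+$ and as $x\to a^-$), so by continuity $y'$ attains the same value at two distinct interior points, and Rolle's Theorem produces a zero of $y''$. Your route---showing $h(x)\to+\infty$ as $x\to0^+$ and $h(x)\to-\infty$ as $x\to a^-$, then applying the Intermediate Value Theorem---requires a little more computation (in particular the care you took to show $x^2y'(x)\to0$ at the left endpoint despite $y'\to-\infty$), but it pays off: the boundary limits of $h$ immediately pin down which side is concave up and which is concave down, whereas the paper relies on the sign of $h'(x_1)$ to sort that out. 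Both approaches are clean; yours is slightly more self-contained in that it never leaves the auxiliary function $h$.
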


\begin{proof}
We know that
\[
    \lim_{x \to 0^+} y'(x) = -\infty
    \mymbox{and}
    \lim_{x \to a^-} y'(x) = -\infty.
\]
By continuity, there are ${b_1,b_2\in\Rd{}}$ such that
${0<b_1<b_2<a}$ and ${y'(b_1)=y'(b_2)}$. Thus by Rolle's Theorem,
there is an ${x_1\in(0,a)}$ such that ${y''(x_1)=0}$.

To prove uniqueness, suppose that ${x_2\in(0,a)}$ is such that
${h(x_2)=0}$, where $h$ is as in \eqref{eq11.013} defined with
respect to the solution $y$. Now, we know from the isocline
structure that ${y'(x_2)<\vep^{-1}}$. Using \eqref{eq11.014},
${h'(x_2)<0}$. Since $x_2$ was an arbitrary zero of $h$, we can
conclude using Lemma~\ref{lem4.001} that ${x_2=x_1}$. Furthermore,
since ${h'(x_1)<0}$, we can say that $y$ is concave up on $(0,x_1)$
and concave down on $(x_1,a)$.
\end{proof}

\subsection{Curve of Inflection Points}

We know from Table~\ref{tab11.001} that solutions to the scalar
differential equation \eqref{eq11.002} can only have inflection
points between $H$ and $\cM$ or above $V$. We can construct a curve
of inflection points, between $H$ and $\cM$, which is close to the
slow manifold.

It is easily verified that
\[
    h(x,y)
    = \frac{ \vep^2 y^3 - \rb{ 3 \vep x } y^2 + \rb{ 2 x^2 - \vep x^2 - x } y + x^3 }{ \vep y - x },
\]
where $h$ is as in \eqref{eq11.013}. Thus, there are three curves
along which solutions have zero second derivative, given implicitly
by
\[
    \vep^2 y^3 - \rb{ 3 \vep x } y^2 + \rb{ 2 x^2 - \vep x^2 - x } y + x^3
    = 0.
\]
One curve lies below the $x$-axis and is discarded. The other two
curves, as expected, are in the positive quadrant. See
Figure~\ref{fig11.004}.

\begin{figure}[t]
\begin{center}
    \includegraphics[width=\smallfigwid]{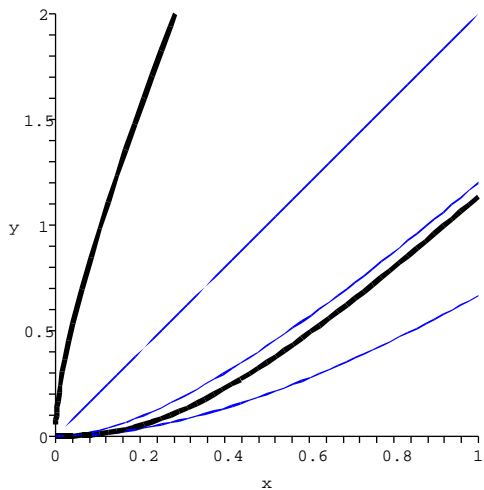}
    \caption{The two thick curves are curves along which solutions of \eqref{eq11.002}
    have inflection points, for parameter value ${\vep=0.5}$.  The thin curves
    are the horizontal, $\alpha$, and vertical isoclines. We will show
    later that the slow manifold lies between the lower
    thick curve and the middle thin curve.} \label{fig11.004}
\end{center}
\end{figure}

Recall that, for a fixed ${c\in\rb{0,\vep^{-1}}}$, the isocline
${w(x):=F(x,c)}$ switches from being a strong lower fence to being a
strong upper fence at ${x=\xi(c)}$ and ${y=F(\xi(c),c)}$, where $F$
is defined in \eqref{eq11.006} and $\xi$ is defined in
\eqref{eq11.010}. As it turns out,
\begin{equation} \label{eq11.015}
    \cY(x) := F\rb{ x, \xi^{-1}(x) },
    \quad
    x > 0
\end{equation}
will be a curve of inflection points between $H$ and $\cM$. Note
that ${H(x)<F(x,c)<\alpha(x)}$ for all ${x>0}$ and
${c\in\rb{0,\vep^{-1}}}$, which follows from the isocline structure.
Moreover, note ${0<\xi^{-1}(x)<\vep^{-1}}$ for all ${x>0}$. Thus,
${H(x)<\cY(x)<\alpha(x)}$ for all ${x>0}$.

\begin{claim} \label{claim0007}
Suppose that ${x_0>0}$ and ${H(x_0)<y_0<\alpha(x_0)}$. Define the
slope ${c:=f(x_0,y_0)}$ and isocline ${w(x):=F(x,c)}$. Then, the
isocline $w$ satisfies
\[
    w'(x_0)
    \begin{cases}
        > f(x_0,y_0), \mymbox{if} H(x_0) < y_0 < \cY(x_0) \\
        = f(x_0,y_0), \mymbox{if} y_0 = \cY(x_0) \\
        < f(x_0,y_0), \mymbox{if} \cY(x_0) < y_0 < \alpha(x_0)
    \end{cases}.
\]
\end{claim}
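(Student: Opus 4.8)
The plan is to exploit Proposition~\ref{prop0016} together with the monotonicity of the map $c \mapsto \xi(c)$ established in the preceding Claim. First I would fix $x_0>0$ and recall that, since $f(x_0,\cdot)$ is continuous and strictly increasing on the vertical segment between $H(x_0)$ and $\alpha(x_0)$ (this is essentially the isocline structure: distinct isoclines $F(\cdot,c)$ with $c\in(0,\vep^{-1})$ foliate the region $\Gamma_1$), the assignment $y_0 \mapsto c := f(x_0,y_0)$ is a strictly increasing bijection from $\bigl(H(x_0),\alpha(x_0)\bigr)$ onto $\bigl(0,\vep^{-1}\bigr)$. In particular $w(x):=F(x,c)$ is precisely the isocline through $(x_0,y_0)$, so $w(x_0)=y_0$ and $f(x_0,w(x_0))=c$.

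Next I would translate the trichotomy of Proposition~\ref{prop0016}(a), which compares $w'(x)$ with $f(x,w(x))$ according to whether $x$ is less than, equal to, or greater than $\xi(c)$, into a statement at the \emph{fixed} point $x_0$ by varying $c$ instead. Concretely: $w'(x_0) = f(x_0,w(x_0))$ holds exactly when $x_0=\xi(c)$, i.e.\ when $c=\xi^{-1}(x_0)$; and since $\xi$ is strictly increasing (equivalently $\xi^{-1}$ is strictly increasing), $x_0 < \xi(c) \iff \xi^{-1}(x_0) > c$ and $x_0 > \xi(c) \iff \xi^{-1}(x_0) < c$. Now I combine the two monotonicities. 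Because $c$ increases with $y_0$, the condition $c < \xi^{-1}(x_0)$ is equivalent to $y_0 < F\bigl(x_0,\xi^{-1}(x_0)\bigr) = \cY(x_0)$, and $c > \xi^{-1}(x_0)$ is equivalent to $\cY(x_0) < y_0$. Feeding these equivalences into Proposition~\ref{prop0016}(a): when $H(x_0)<y_0<\cY(x_0)$ we have $x_0 > \xi(c)$, hence $w'(x_0) > f(x_0,w(x_0)) = f(x_0,y_0)$; when $y_0=\cY(x_0)$ we have $x_0=\xi(c)$, hence equality; and when $\cY(x_0)<y_0<\alpha(x_0)$ we have $x_0 < \xi(c)$, hence $w'(x_0) < f(x_0,y_0)$. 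This is exactly the claimed trichotomy.

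The only step that needs a little care — and the one I would flag as the main (minor) obstacle — is justifying that $y_0 \mapsto f(x_0,y_0)$ is strictly increasing on $\bigl(H(x_0),\alpha(x_0)\bigr)$ with the stated range, so that the isocline through $(x_0,y_0)$ really is $F(\cdot,c)$ for a unique $c\in(0,\vep^{-1})$. This follows from the computation already recorded in the proof of Theorem~\ref{c10t-001}(a), where $\partial f/\partial y = 1/(x-\vep y)^2 > 0$ throughout $\Gamma_1$, together with the boundary values $f(x_0,H(x_0))=0$ and $f(x_0,\alpha(x_0))=\vep^{-1}$ coming from \eqref{eq11.006}; alternatively it is immediate from the fact (Remark~(i) in \S\ref{sec003}) that the isoclines $F(\cdot,c)$, $c\in(0,\vep^{-1})$, sweep out $\Gamma_1$ monotonically in $c$. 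Once this bijection is in hand, the rest is just composing the two increasing maps $y_0\mapsto c$ and $c\mapsto\xi(c)$, and everything reduces to Proposition~\ref{prop0016} with no further calculation.
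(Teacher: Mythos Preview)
Your proof is correct and follows essentially the same approach as the paper's own proof: both use the monotonicity of $f(x_0,\cdot)$ (via $\partial f/\partial y>0$ in $\Gamma_1$) together with the monotonicity of $\xi$ to reduce the trichotomy to Proposition~\ref{prop0016}(a), the paper simply treating one case and declaring the others similar. One minor slip to fix: the displayed equivalences ``$x_0<\xi(c)\iff\xi^{-1}(x_0)>c$'' and ``$x_0>\xi(c)\iff\xi^{-1}(x_0)<c$'' have the inequalities on the right reversed (since $\xi^{-1}$ is increasing, $x_0<\xi(c)\iff\xi^{-1}(x_0)<c$), though you silently use the correct direction when you actually apply them, so your conclusions are right.
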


\begin{proof}
Note that ${0<c<\vep^{-1}}$ and ${y_0=w(x_0)}$. We will only show
the third case since the other two cases are similar. Assume that
${\cY(x_0)<y_0<\alpha(x_0)}$. Appealing to the isocline structure,
we know ${f_y(x,y)>0}$ if ${x>0}$ and ${H(x)<y<\alpha(x)}$.
Consequently, ${f(x_0,y_0)>f(x_0,\cY(x_0))}$. Since ${c=f(x_0,y_0)}$
and ${\xi^{-1}(x_0)=f(x_0,\cY(x_0))}$, we can conclude
${c>\xi^{-1}(x_0)}$. Since $\xi$ is strictly increasing,
${x_0<\xi(c)}$. By virtue of Proposition~\ref{prop0016}, we can
conclude ${w'(x_0)<f(x_0,y_0)}$.
\end{proof}

\begin{claim}
The curve ${y=\cY(x)}$ is analytic for all ${x>0}$.
\end{claim}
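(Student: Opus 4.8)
The plan is to exhibit $\cY(x)$ as the composition of analytic maps. Recall from \eqref{eq11.015} that $\cY(x) = F(x,\xi^{-1}(x))$, so $\cY$ is built from three ingredients: the map $x \mapsto x$, the inverse function $\xi^{-1}(x)$, and the isocline function $F(x,c)$ defined in \eqref{eq11.006}. I would first invoke the second part of the Claim preceding Proposition~\ref{prop0016}, which already asserts that $\xi^{-1}(x)$ is analytic for all $x>0$ and takes values in $(0,\vep^{-1})$. Next, I would observe that $F(x,c) = x^2/(K(c)+\vep x)$ is jointly analytic in $(x,c)$ on the region $x>0$, $0<c<\vep^{-1}$: the denominator $K(c)+\vep x$ is analytic there (since $K(c)=1/(1+c)$ is analytic for $c>-1$, hence in particular for $c\in(0,\vep^{-1})$) and is strictly positive on that region (as $x>0$ and $K(c)>0$), so the quotient is analytic by the standard fact that quotients of analytic functions with non-vanishing denominator are analytic.

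The main step is then simply to note that the composition of analytic functions is analytic: the map $x \mapsto (x, \xi^{-1}(x))$ sends $(0,\infty)$ analytically into the region $\{(x,c): x>0,\ 0<c<\vep^{-1}\}$ (using the range statement $0<\xi^{-1}(x)<\vep^{-1}$ recorded just before Claim~\ref{claim0007}), and $F$ is analytic on that region, so $\cY(x)=F(x,\xi^{-1}(x))$ is analytic on $(0,\infty)$.

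I do not expect any genuine obstacle here: all the hard analytic-regularity work — in particular the analyticity of $\xi$ and the existence and analyticity of its inverse $\xi^{-1}$ — has already been discharged in the earlier Claim (whose proof the authors declare ``routine, tedious, and omitted''), and the analyticity of $F$ in both variables is immediate from its explicit rational form together with the non-vanishing of $K(c)+\vep x$. The only point requiring a word of care is to make sure the range of $\xi^{-1}$ stays inside the domain of analyticity of $c\mapsto F(x,c)$, i.e. that $K(c)+\vep x \neq 0$ along the curve; this is guaranteed since $0<\xi^{-1}(x)<\vep^{-1}$ forces $K(\xi^{-1}(x))>0$, and $\vep x>0$. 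Hence the proof is a short two-line composition argument.
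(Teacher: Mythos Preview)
Your proposal is correct and follows essentially the same approach as the paper: invoke the earlier claim that $\xi^{-1}$ is analytic on $(0,\infty)$ with range in $(0,\vep^{-1})$, note that $F(x,c)$ is analytic for $x>0$ and $0<c<\vep^{-1}$, and conclude by composition. The paper's proof is just a terser version of exactly this argument.
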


\begin{proof}
We know that $\xi^{-1}(x)$ is analytic and
${0<\xi^{-1}(x)<\vep^{-1}}$ for all ${x>0}$. Since $F(x,c)$ is
analytic if ${x>0}$ and ${0<c<\vep^{-1}}$, we see from the
definition \eqref{eq11.015} that $\cY(x)$ is analytic for all
${x>0}$.
\end{proof}

\begin{proposition} \label{prop0017}
The function $h$, defined in \eqref{eq11.013}, satisfies
\[
    h(x,y)
    \begin{cases}
        < 0, \mymbox{if} x > 0, ~ H(x) < y < \cY(x) \\
        = 0, \mymbox{if} x > 0, ~ y = \cY(x) \\
        > 0, \mymbox{if} x > 0, ~ \cY(x) < y < \alpha(x)
    \end{cases}.
\]
\end{proposition}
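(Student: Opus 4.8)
The plan is to read off the sign of $h(x,y)$ at a point directly from the slope comparison already carried out in Claim~\ref{claim0007}. The proposition is essentially a repackaging of Claims~\ref{claim0006} and~\ref{claim0007}, so I do not expect a serious obstacle; the work is bookkeeping.

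First I would fix an arbitrary point $(x_0,y_0)$ with $x_0>0$ and $H(x_0)<y_0<\alpha(x_0)$, since the three cases in the statement together exhaust this range. Put $c:=f(x_0,y_0)$ and let $w(x):=F(x,c)$ be the isocline through $(x_0,y_0)$, so that $w(x_0)=y_0$. By the isocline structure of \S\ref{sec003} — the open region between $H$ and $\alpha$ corresponds exactly to slopes in $\rb{0,\vep^{-1}}$ — we have $0<c<\vep^{-1}$; hence $w$ is a genuine member of this isocline family lying strictly between $H$ and $\alpha$, and the hypotheses of Claim~\ref{claim0007} are met at $(x_0,y_0)$.

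Next I would relate $h(x_0,y_0)$ to $w'(x_0)$. Directly from the definition \eqref{eq11.013}, $h(x_0,y_0)=x_0^{2}f(x_0,y_0)+y_0\rb{\vep y_0-2x_0}$; since $w$ satisfies the isocline differential equation \eqref{eq11.008} and $w(x_0)=y_0$, the second summand equals $-x_0^{2}w'(x_0)$, so that
\[
    h(x_0,y_0)=x_0^{2}\sqb{\,f(x_0,y_0)-w'(x_0)\,}
\]
(cf.\ Claim~\ref{claim0006}, applied to the solution of \eqref{eq11.002} through $(x_0,y_0)$). Since $x_0>0$, the sign of $h(x_0,y_0)$ coincides with the sign of $f(x_0,y_0)-w'(x_0)$.

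Finally I would invoke Claim~\ref{claim0007}: when $H(x_0)<y_0<\cY(x_0)$ it gives $w'(x_0)>f(x_0,y_0)$, hence $h(x_0,y_0)<0$; when $y_0=\cY(x_0)$ it gives $w'(x_0)=f(x_0,y_0)$, hence $h(x_0,y_0)=0$; and when $\cY(x_0)<y_0<\alpha(x_0)$ it gives $w'(x_0)<f(x_0,y_0)$, hence $h(x_0,y_0)>0$. This is exactly the trichotomy claimed. The only point deserving attention is the verification that $c=f(x_0,y_0)$ lies in $\rb{0,\vep^{-1}}$, so that $w=F(\cdot,c)$ is a legitimate isocline through the point and Claim~\ref{claim0007} can be applied; this is immediate from the isocline structure, and I see no further difficulty.
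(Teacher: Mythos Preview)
Your proposal is correct and follows exactly the paper's route: fix a point, pass through it the isocline $w=F(\cdot,c)$ with $c=f(x_0,y_0)$, rewrite $h(x_0,y_0)$ as a positive multiple of $f(x_0,y_0)-w'(x_0)$ via Claim~\ref{claim0006}, and then read off the sign from Claim~\ref{claim0007}. In fact your displayed identity $h(x_0,y_0)=x_0^{2}\sqb{\,f(x_0,y_0)-w'(x_0)\,}$ is the correct one (the paper's statement of Claim~\ref{claim0006} has a harmless factor $x_0$ in place of $x_0^{2}$); either way the sign conclusion is unaffected.
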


\begin{proof}
Let ${x_0>0}$ and ${H(x_0)<y_0<\alpha(x_0)}$ be fixed. Consider the
slope ${c:=f(x_0,y_0)}$ and isocline ${w(x):=F(x,c)}$. We know from
Claim~\ref{claim0006} that
\[
    h(x_0,y_0) = x_0 \sqb{ f(x_0,y_0) - w'(x_0) }.
\]
The result follows from Claim~\ref{claim0007}.
\end{proof}

\begin{proposition} \label{prop0018}
The curve ${y=\cY(x)}$ satisfies
\[
    H(x) < \cY(x) < \cM(x)
    \mymbox{for all}
    x > 0.
\]
\end{proposition}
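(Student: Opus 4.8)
The plan is to show the two inequalities separately. The left inequality $H(x)<\cY(x)$ is already established in the paragraph preceding Claim~\ref{claim0007}: it follows immediately from the isocline structure, since $\cY(x)=F(x,\xi^{-1}(x))$ with $0<\xi^{-1}(x)<\vep^{-1}$, and every isocline for a slope strictly between $0$ and $\vep^{-1}$ lies strictly above $H$. So the real content is the inequality $\cY(x)<\cM(x)$. My strategy here is to combine Proposition~\ref{prop0017} (which says $h<0$ strictly below $\cY$ and $h>0$ strictly above $\cY$, within the strip $H<y<\alpha$) with Claim~\ref{claim0006} (which translates the sign of $h$ into the sign of $y''$ along a solution). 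The slow manifold $\cM$ lies in the strip $H<\cM<\alpha$ by Theorem~\ref{c10t-001}, so these tools apply along $\cM$.

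First I would suppose, for contradiction, that $\cY(x_0)\ge\cM(x_0)$ for some $x_0>0$. If $\cY(x_0)=\cM(x_0)$ I would argue as follows: at such a point Proposition~\ref{prop0017} gives $h(x_0,\cM(x_0))=0$, so $\cM''(x_0)=0$; I would then examine $h'$ along $\cM$ using \eqref{eq11.014}, namely $h'(x_0)=x_0^2 p(x_0)h(x_0)+2\cM(x_0)[\vep\cM'(x_0)-1]$, which reduces to $2\cM(x_0)[\vep\cM'(x_0)-1]<0$ because $\cM'(x_0)<\vep^{-1}$ (as $\cM<\alpha$, the isocline for slope $\vep^{-1}$, forces the slope of $\cM$ below $\vep^{-1}$) and $\cM(x_0)>0$. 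So $h$ along $\cM$ is strictly decreasing through every zero; by Lemma~\ref{lem4.001}(b) $h$ (along $\cM$) has at most one zero on any interval, and since it is decreasing there, $h>0$ to the left of $x_0$ and $h<0$ to the right. The case $\cY(x_0)>\cM(x_0)$ can be reduced to producing a crossing point by a continuity/intermediate-value argument, since both curves are continuous and $\cY(x)<\cM(x)$ must hold for at least some $x$ — one way to see that is that near the origin both are squeezed between $H$ and $\alpha$, but I would more robustly get a crossing from the contradiction structure below. At a right-most crossing $x_1$ where $\cM(x_1)=\cY(x_1)$, for $x$ slightly larger than $x_1$ we would have $\cM(x)<\cY(x)$, i.e. $\cM$ lies strictly below $\cY$, so by Proposition~\ref{prop0017} $h<0$ along $\cM$ there, meaning $\cM''<0$; but combined with the sign analysis of $h'$ at the crossing ($h$ decreasing through its zero), and the fact that to the left of $x_1$ we'd have $\cM>\cY$ hence $h>0$ hence $\cM''>0$, I can corner the behaviour of $\cM$ into a contradiction with the antifunnel property — specifically, once $\cM$ dips below $\cY$ and becomes concave down it cannot stay inside $\Gamma_1$ for all $x>0$, because a concave-down solution below $\cY$ (hence with $y'<\vep^{-1}$, indeed with $y'$ forced down further) must eventually cross $H$, contradicting $\cM>H$ everywhere.

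An alternative and perhaps cleaner route, which I would try first, is a direct fence argument: Proposition~\ref{prop0017} says that the curve $\cY$ has $h=0$ along it, and from the formula $h(x,\cY(x))=x[\xi^{-1}(x)-w'(x)]$ with equality, together with the monotonicity of $\xi$, one can check that $\cY$ itself is a \emph{weak} fence — and more usefully, that any isocline $F(\cdot,c)$ evaluated at its turning point $\xi(c)$ touches $\cY$. Since $\cM$ is trapped above every isocline $F(\cdot,c)$ for $x>\xi(c)$ by Proposition~\ref{prop0016}(b), and the union of the points $(\xi(c),F(\xi(c),c))$ over $c\in(0,\vep^{-1})$ traces out exactly the curve $\cY$, it follows that $\cM(x)>F(x,\xi^{-1}(x))=\cY(x)$ for every $x>0$: given $x$, pick $c$ slightly less than $\xi^{-1}(x)$ so that $x>\xi(c)$, apply Proposition~\ref{prop0016}(b) to get $\cM(x)>F(x,c)$, and let $c\to\xi^{-1}(x)^-$ using continuity of $F$ to conclude $\cM(x)\ge\cY(x)$; strictness then comes from the strict inequality in Proposition~\ref{prop0016}(b).

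The main obstacle I anticipate is getting the \emph{strict} inequality $\cY(x)<\cM(x)$ rather than just $\le$, and handling the limit $c\to\xi^{-1}(x)^-$ carefully: Proposition~\ref{prop0016}(b) gives $\cM(x)>F(x,c)$ for each $x>\xi(c)$, but passing to the limit only yields $\cM(x)\ge\cY(x)$ directly. To upgrade to strictness I would fix $x$, choose a single $c_*<\xi^{-1}(x)$ (not a sequence), note $x>\xi(c_*)$ strictly, so $\cM(x)>F(x,c_*)$, and then use that $F(x,\cdot)$ is strictly increasing in $c$ on the relevant range (which follows from $K(c)$ being strictly decreasing) together with $c_*<\xi^{-1}(x)$ to get $F(x,c_*)<F(x,\xi^{-1}(x))=\cY(x)$ — wait, that inequality goes the wrong way, so instead I would keep $\cM(x)>F(x,c_*)$ and separately observe that as $c_*\uparrow\xi^{-1}(x)$ the values $F(x,c_*)\uparrow\cY(x)$, and since $\cM(x)$ is a fixed number exceeding every such $F(x,c_*)$, we get $\cM(x)\ge\cY(x)$; the strictness then requires the sharper observation that for $c_*$ close to $\xi^{-1}(x)$ the antifunnel between $H$ and $F(\cdot,c_*)$ on $(\xi(c_*),\infty)$ is genuinely thicker than a point at $x$, so $\cM(x)$ sits strictly inside it, bounded away from the lower fence by a margin that does \emph{not} shrink to zero as $c_*\uparrow\xi^{-1}(x)$ at the \emph{fixed} abscissa $x$ — this is the delicate point and is where I would spend the most care.
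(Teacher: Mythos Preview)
Your computation that $h'(x_0)<0$ whenever $h(x_0)=0$ along $\cM$ (via \eqref{eq11.014} together with $\cM'<\vep^{-1}$) is precisely the key step the paper uses for strictness. The difference is in how the contradiction is set up. The paper exploits the concavity results already recorded in Table~\ref{tab11.001}: every solution strictly between $\cM$ and $V$ is concave up, so in particular $h(x,y)>0$ whenever $\cM(x)<y<\alpha(x)$; letting $y\downarrow\cM(x)$ gives $h(x,\cM(x))\ge 0$ for \emph{every} $x>0$, which via Proposition~\ref{prop0017} yields $\cY\le\cM$ immediately. Then a single zero $h(x_0)=0$ with $h'(x_0)<0$ contradicts $h\ge 0$ everywhere, and you are done. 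You bypass this shortcut and instead try to chase $\cM$ out of $\Gamma_1$ by arguing that a concave-down tail must eventually cross $H$. That claim is true (since $\cM'$ would be decreasing and bounded above by some constant below $\vep^{-1}$ while $H'(x)\to\vep^{-1}$, forcing $H-\cM\to\infty$), but you do not supply this argument. You also leave the case $\cM(x)<\cY(x)$ for \emph{all} $x>0$ unhandled: there is no crossing to locate, yet it is easily dispatched, since then $\cM''<0$ on $(0,\infty)$ and $\cM'(0^+)=0$ (Proposition~\ref{c10t-002}) would force $\cM'<0$, contradicting the isocline structure.

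Your second route, through Proposition~\ref{prop0016}(b) and the limit $c\to\xi^{-1}(x)^-$, is a genuine dead end for the \emph{strict} inequality, as you correctly suspect. Passing to the limit in $\cM(x)>F(x,c)$ yields only $\cM(x)\ge\cY(x)$, and your final attempt to recover strictness (a margin ``that does not shrink to zero as $c_*\uparrow\xi^{-1}(x)$'') is exactly the statement $\cM(x)>\cY(x)$ you are trying to prove. The paper does not pursue this route.
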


\begin{proof}
We know already that ${H(x)<\cY(x)<\alpha(x)}$ for all ${x>0}$. We
know from our results on concavity (see Table~\ref{tab11.001}) that
${h(x,y)>0}$ if ${x>0}$ and ${\cM(x)<y<\alpha(x)}$, where $h$ is the
function defined in \eqref{eq11.013}. By continuity, we can conclude
${h(x,\cM(x)) \geq 0}$ for all ${x>0}$. It follows from
Proposition~\ref{prop0017} that ${\cY(x)\leq\cM(x)}$ for all
${x>0}$.

To establish a strict inequality, let $h$ be defined along the
solution ${y=\cM(x)}$. Assume, on the contrary, that there is an
${x_0>0}$ such that ${h(x_0)=0}$. Using \eqref{eq11.014},
${h'(x_0)<0}$. This contradicts the fact that ${h(x) \geq 0}$ for
all ${x>0}$.
\end{proof}

\subsection{Slow Tangent Manifold}

The curve ${y=\cY(x)}$ can be referred to as a slow tangent manifold
(or as an intrinsic low-dimensional manifold) since it consists of
the points for which the tangent vector for the planar system
\eqref{eq11.001} points in the slow direction. See, for example,
\cite{Kaper2002,MaasPope,Okuda1,Okuda2,Okuda3,Roussel1994}. To see
why the curve of inflection points and the slow tangent manifold are
equivalent, first consider the general planar system
${\dot{\vec{x}}=\vec{g}(\vec{x})}$, where
${\dot{}=\derivslash{}{t}}$ and ${\vec{g}\in\Cr{1}{\Rd{2},\Rd{2}}}$,
along with the corresponding scalar system
${y'=\fracslash{g_2(x,y)}{g_1(x,y)}}$, where ${'=\derivslash{}{x}}$.
Consider the linearization matrix
${\mat{A}(\vec{x}):=\setb{g_{ij}(\vec{x})}_{i,j=1}^2}$, where
${g_{ij}(\vec{x}):=\pderivslash{g_i(\vec{x})}{x_j}}$, which has
characteristic equation
\[
    \lambda^2 - \tau \lambda + \Delta = 0,
    \mymbox{where}
    \tau := g_{11} + g_{22}
    \mymbox{and}
    \Delta := g_{11} g_{22} - g_{12} g_{21}.
\]
For notational brevity, we are suppressing the dependence on
$\vec{x}$.

\begin{claim}
Suppose ${4\Delta<\tau^2}$ and ${g_{12} \ne 0}$. Then, $\mat{A}$ has
real distinct eigenvalues and associated distinct eigenvectors
given, respectively, by
\[
    \lambda_\pm := \frac{ \tau \pm \sqrt{ \tau^2 - 4 \Delta } }{2}
    \mymbox{and}
    \vec{v}_\pm := \twobyonematrix{1}{\sigma_\pm},
    \mymbox{where}
    \sigma_\pm := \frac{ \lambda_\pm - g_{11} }{ g_{12} }.
\]
\end{claim}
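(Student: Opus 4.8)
The statement is a standard fact about $2\times2$ matrices, so the proof will be short and essentially computational; the plan is to dispatch the eigenvalues first, then the eigenvectors, then their distinctness.

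First I would handle the eigenvalues. The characteristic equation $\lambda^2-\tau\lambda+\Delta=0$ has discriminant $\tau^2-4\Delta$, which is strictly positive by the hypothesis $4\Delta<\tau^2$. Hence the quadratic formula yields the two roots $\lambda_\pm=\rb{\tau\pm\sqrt{\tau^2-4\Delta}}/2$, and these are real and distinct precisely because the discriminant is nonzero.

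Next I would produce the eigenvectors. Fix a sign and seek $\vec{v}=\rb{v_1,v_2}^T$ with $\rb{\mat{A}-\lambda_\pm\mat{I}}\vec{v}=\vec{0}$. The first scalar equation reads $\rb{g_{11}-\lambda_\pm}v_1+g_{12}v_2=0$. Since $g_{12}\ne0$, no eigenvector can have $v_1=0$ (else $v_2=0$ as well), so we may normalize $v_1=1$, which forces $v_2=\rb{\lambda_\pm-g_{11}}/g_{12}=\sigma_\pm$, giving $\vec{v}_\pm=\twobyonematrix{1}{\sigma_\pm}$. It then remains to check that the second scalar equation, $g_{21}v_1+\rb{g_{22}-\lambda_\pm}v_2=0$, is automatically satisfied. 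This is where the one non-trivial line goes: $\mat{A}-\lambda_\pm\mat{I}$ is singular (its determinant equals the characteristic polynomial evaluated at $\lambda_\pm$, hence zero) and its first row is nonzero (as $g_{12}\ne0$), so its two rows are proportional and the second equation is redundant; alternatively, one substitutes $\sigma_\pm$ and uses the Vieta relations $\lambda_++\lambda_-=\tau$ and $\lambda_+\lambda_-=\Delta$ to verify it by direct computation.

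Finally, the eigenvectors are distinct: $\sigma_+-\sigma_-=\rb{\lambda_+-\lambda_-}/g_{12}\ne0$ because $\lambda_+\ne\lambda_-$ and $g_{12}\ne0$, so $\vec{v}_+\ne\vec{v}_-$ (this is also immediate from the general fact that eigenvectors belonging to distinct eigenvalues are linearly independent). I do not anticipate any genuine obstacle; the only point needing justification is the redundancy of the second-row equation, and even that reduces to the observation that a singular $2\times2$ matrix with a nonzero first row has proportional rows.
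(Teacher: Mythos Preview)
Your proof is correct and complete. The paper itself simply states that the proof is routine and omits it entirely, so your argument is more detailed than what appears there but follows exactly the standard route one would expect.
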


\begin{proof}
The proof is routine.
\end{proof}

\begin{proposition} \label{prop0011}
Suppose that ${g_1 \ne 0}$, ${g_{12} \ne 0}$, and ${4\Delta<\tau^2}$
at some fixed point $(a,b)$ and let $y(x)$ be the scalar solution
through $(a,b)$. Then, ${y''(a)=0}$ if and only if
${\vec{g}~\|~\vec{v}_+}$ or ${\vec{g}~\|~\vec{v}_-}$ at $(a,b)$.
\end{proposition}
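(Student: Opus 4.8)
The plan is to compute $y''(a)$ directly in terms of the vector field $\vec{g}$ at the fixed point $(a,b)$ and recognize the resulting condition as a collinearity condition. We have the scalar equation $y' = g_2/g_1$, so differentiating with the Chain Rule gives
\[
    y'' = \frac{ (g_{21} + g_{22} y')\, g_1 - g_2\, (g_{11} + g_{12} y') }{ g_1^2 },
\]
and substituting $y' = g_2/g_1$ and clearing denominators shows that $y''(a) = 0$ is equivalent to
\[
    g_{21} g_1^2 + (g_{22} - g_{11}) g_1 g_2 - g_{12} g_2^2 = 0
\]
at $(a,b)$ (using $g_1 \ne 0$). The right-hand side is a quadratic form in $(g_1, g_2)$; the plan is to factor it. Since $g_{12} \ne 0$, one divides by $-g_{12}$ and treats it as a quadratic in the ratio $g_2/g_1$, whose two roots I expect to be exactly $\sigma_+$ and $\sigma_-$ from the preceding Claim. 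That is, the quadratic factors as
\[
    -g_{12}\rb{ \frac{g_2}{g_1} - \sigma_+ }\rb{ \frac{g_2}{g_1} - \sigma_- },
\]
which vanishes precisely when $g_2/g_1 = \sigma_\pm$, i.e. when $(g_1, g_2)^T$ is parallel to $\vec{v}_\pm = (1, \sigma_\pm)^T$.

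The main step to verify is that $\sigma_+$ and $\sigma_-$ really are the roots of the quadratic $g_{12}\sigma^2 + (g_{11} - g_{22})\sigma - g_{21} = 0$. For the product of roots one checks $\sigma_+ \sigma_- = -g_{21}/g_{12}$, and for the sum $\sigma_+ + \sigma_- = (g_{22} - g_{11})/g_{12}$; both follow from the definitions $\sigma_\pm = (\lambda_\pm - g_{11})/g_{12}$ together with $\lambda_+ + \lambda_- = \tau = g_{11} + g_{22}$ and $\lambda_+ \lambda_- = \Delta = g_{11}g_{22} - g_{12}g_{21}$. The hypothesis $4\Delta < \tau^2$ guarantees the roots (hence the eigenvectors $\vec{v}_\pm$) are real and distinct, so the factorization is legitimate over $\mathbb{R}$, and $g_{12} \ne 0$ ensures the leading coefficient is nonzero so the quadratic genuinely has two roots and $\sigma_\pm$ are well-defined.

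The hypothesis $g_1 \ne 0$ is used twice: once to make the scalar equation $y' = g_2/g_1$ (and hence $y(x)$ through $(a,b)$) well-defined, and once to divide through by $g_1^2$ when passing between the "$y'' = 0$" form and the quadratic-form condition. I do not expect a serious obstacle here — the entire argument is an algebraic identity together with a factoring of a quadratic — so the only care needed is bookkeeping: confirming the signs in the quadratic form match after clearing denominators, and confirming that "$(g_1,g_2)^T \parallel \vec{v}_\pm$" is the correct reading of "$\vec{g} \parallel \vec{v}_\pm$" at the point (noting the row/column-vector convention flagged in the earlier remark, so that $\vec{g}$ evaluated with the row argument $(a,b)$ has components $g_1(a,b), g_2(a,b)$).
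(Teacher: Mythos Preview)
Your proposal is correct and follows essentially the same route as the paper: differentiate $y'=g_2/g_1$, obtain a quadratic in $g_2/g_1$ with coefficients $g_{12},\,g_{11}-g_{22},\,-g_{21}$, and identify its roots as $\sigma_\pm$. The paper's proof is terser, simply recording the factored identity
\[
    y'' = \frac{ -g_{12}\,\sqb{\rb{g_2/g_1} - \sigma_+}\sqb{\rb{g_2/g_1} - \sigma_-} }{ g_1 }
\]
without writing out the intermediate unfactored form or the Vieta check, but the content is the same.
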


\begin{proof}
First, note that $\sigma_\pm$ is the slope of the eigenvector
$\vec{v}_\pm$. If we differentiate the scalar differential equation
and manipulate the resulting expression, we obtain
\[
    y''
    = \frac{ -g_{12} \sqb{ \rb{\fracslash{g_2}{g_1}} - \sigma_+ } \sqb{ \rb{\fracslash{g_2}{g_1}} - \sigma_- } }{ g_1 }.
\]
The conclusion follows.
\end{proof}

For the specific planar and scalar systems \eqref{eq11.001} and
\eqref{eq11.002}, we have
\[
    \mat{A} = \twobytwomatrix{ -2 x + \vep y }{ \vep x }{ 2 x - \vep y }{ -\vep x - 1 },
    \quad
    \tau = -\rb{ \vep + 2 } x + \vep y - 1,
    \mymbox{and}
    \Delta = 2 x - \vep y.
\]
To apply Proposition~\ref{prop0011}, we need to verify that ${g_1
\ne 0}$, ${\pderivslash{g_1}{y} \ne 0}$, and ${\tau^2>4\Delta}$ in
the relevant regions. Trivially, ${g_1 \ne 0}$ (except along the
vertical isocline) and ${g_{12}>0}$ for ${x>0}$. To show that
${\tau^2>4\Delta}$ for ${x>0}$, observe
\[
    \tau^2 - 4 \Delta
    = \vep^2 \sqb{ y - \frac{ \rb{ \vep + 2 } x - 1 }{\vep} }^2 + 4 \vep x
    \geq 4 \vep x
    > 0.
\]
This establishes that $\cY$ is a tangent manifold. To establish that
$\cY$ is indeed a slow tangent manifold, we note that (as can be
shown) ${\lambda_-<\lambda_+<0}$, ${\sigma_-<0<\sigma_+}$, and
${\fracslash{g_2}{g_1}>0}$ for every ${\vec{x}\in\Gamma_1}$. We have
thus demonstrated the following.

\begin{proposition}
Consider the planar system \eqref{eq11.001} and the scalar system
\eqref{eq11.002}. The curve of inflection points ${y=\cY(x)}$ is a
slow tangent manifold.
\end{proposition}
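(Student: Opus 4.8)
The plan is to read the result off from Proposition~\ref{prop0011} applied to the specific system, using Proposition~\ref{prop0017} to identify $\cY$ as a zero set of $h$, and then to decide which eigendirection $\vec{g}$ aligns with by a sign count on $\Gamma_1$. First I would record that the curve lies where it needs to: by Proposition~\ref{prop0018} together with the already-noted bound $H(x)<\cY(x)<\alpha(x)$, every point $(x,\cY(x))$ with $x>0$ lies in the interior of $\Gamma_1$, hence strictly below the vertical isocline and strictly above $y=0$. On this region, $g_1=x(\vep y-x)\ne 0$ (since $\vep y\le\vep\alpha(x)<x$), $g_{12}=\vep x\ne 0$, and $\tau^2-4\Delta=\vep^2\sqb{y-\fracslash{((\vep+2)x-1)}{\vep}}^2+4\vep x\ge 4\vep x>0$, which are exactly the hypotheses needed for Proposition~\ref{prop0011}.

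Next I would invoke Proposition~\ref{prop0017}: on $\setb{x>0,\,H(x)<y<\alpha(x)}$ the equation $h(x,y)=0$ cuts out precisely the graph of $\cY$, and since $p(x,y)>0$ there, $h(x,y)=0$ is equivalent to $y''=0$ along the solution through $(x,y)$. Proposition~\ref{prop0011} then tells us that at each such point $\vec{g}$ is parallel to $\vec{v}_+$ or to $\vec{v}_-$. This already establishes that $\cY$ is a tangent manifold; the remaining task is to exclude $\vec{v}_-$.

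For that I would carry out the sign bookkeeping on $\Gamma_1$. From $\vep y<x<2x$ one gets $\Delta=2x-\vep y>0$, and from $\vep y<x$ one gets $\tau=-(\vep+2)x+\vep y-1<-(\vep+1)x-1<0$; with $\tau^2>4\Delta$ this forces $\lambda_-<\lambda_+<0$, so $\vec{v}_+$ is the slow direction. For the eigenvector slopes, evaluating the characteristic polynomial at $g_{11}$ gives $(g_{11}-\lambda_+)(g_{11}-\lambda_-)=-g_{12}g_{21}=-\vep x(2x-\vep y)<0$ on $\Gamma_1$, so $g_{11}$ lies strictly between the two eigenvalues and hence $\sigma_-<0<\sigma_+$. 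On the other hand the slope of $\vec{g}$ is $\fracslash{g_2}{g_1}=f(x,y)=y'$, which is strictly positive on $\Gamma_1$ by the isocline structure. Since $\vec{v}_-$ has negative slope, $\vec{g}$ cannot be parallel to it; therefore $\vec{g}\,\|\,\vec{v}_+$ at every point of $\cY$, and $\cY$ is a slow tangent manifold.

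I expect this proof to be essentially routine: the only point requiring a little care is verifying that $\cY$ genuinely sits in the \emph{open} region where $g_1\ne 0$ and the eigenvalue and eigenvector inequalities are strict, and that is precisely what Proposition~\ref{prop0018} delivers. The algebra in the sign count is all of the "can be shown" variety flagged in the surrounding discussion.
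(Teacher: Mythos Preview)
Your proof is correct and follows essentially the same route as the paper: verify the hypotheses of Proposition~\ref{prop0011} on $\Gamma_1$, use Proposition~\ref{prop0017} to identify $\cY$ with the inflection locus, and then rule out $\vec{v}_-$ via the sign pattern $\lambda_-<\lambda_+<0$, $\sigma_-<0<\sigma_+$, $\fracslash{g_2}{g_1}>0$. The only difference is that you actually supply the ``can be shown'' computations (in particular the characteristic-polynomial trick $(g_{11}-\lambda_+)(g_{11}-\lambda_-)=-g_{12}g_{21}<0$ to place $g_{11}$ between the eigenvalues), and you explicitly invoke Proposition~\ref{prop0018} to keep $\cY$ in the open region where all inequalities are strict.
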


\section{Behaviour of Solutions Near the Origin} \label{sec006}

In this section, we establish the full asymptotic behaviour of
scalar solutions $y(x)$ as ${x \to 0^+}$. Moreover, we will obtain
the leading-order behaviour of planar solutions $\vec{x}(t)$ as
${t\to\infty}$.

\subsection{Scalar Solutions}

We will begin by attempting to find a Taylor series solution.
Consider the differential equation \eqref{eq11.002}, which can be
rewritten
\begin{equation} \label{eq11.016}
    \vep x y y' - x^2 y' - x^2 + y + \vep x y = 0.
\end{equation}
Assume that $y(x)$ is a solution in $\Gamma_0$ of the form
\begin{equation} \label{eq11.017}
    y(x) = \sum_{n=0}^\infty b_n x^n
\end{equation}
for undetermined coefficients $\seq{b}{n}{0}{\infty}$. If we
substitute the series \eqref{eq11.017} into \eqref{eq11.016} and
then solve for the coefficients, we obtain
\begin{align}
    &b_0 = 0, \quad b_1 = 0, \quad b_2 = 1, \quad b_3 = 2 - \vep, \notag\\
    \mymbox{and}
    &b_n = \rb{ n - 1 - \vep } b_{n-1} - \vep \sum_{m=2}^{n-2} \rb{n-m} b_m b_{n-m} \mymbox{for} n \geq 4. \label{eq11.019}
\end{align}

We will use centre manifold theory to show that the series
\eqref{eq11.017} is fully correct for each solution inside the
trapping region $\Gamma_0$. However, we must first show that each
solution is a centre manifold. That is, we must show that each
solution $y(x)$ satisfies ${y(0)=0}$ and ${y'(0)=0}$.
Proposition~\ref{c10t-002} already established that this is true for
$y(x)$ inside $\Gamma_1$.

\begin{proposition} \label{prop0020}
Let $y$ be a solution to \eqref{eq11.002} lying inside $\Gamma_0$
for ${x\in(0,a)}$, where ${a>0}$. Then, we can extend $y(x)$ and
$y'(x)$ to say ${y(0)=0}$ and ${y'(0)=0}$.
\end{proposition}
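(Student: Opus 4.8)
The plan is to reduce the general case ($y$ inside $\Gamma_0$) to the already-settled case ($y$ inside $\Gamma_1$, Proposition~\ref{c10t-002}) by tracking where the solution sits relative to the slow manifold $\cM$. By Theorem~\ref{c10t-001}(b), every solution in $\Gamma_0$ other than $\cM$ itself must eventually exit $\Gamma_0$, but on a possibly smaller initial interval $(0,a')\subseteq(0,a)$ the solution remains in $\Gamma_0$, so near $x=0$ we may assume $\cM(x)\leq y(x)\leq V(x)$ or $H(x)\leq y(x)\leq\cM(x)$. In the first case I would sandwich $y$ between $\cM$ and $\alpha$ for small $x$ and reduce to the $\Gamma_1$ situation; in the second, between $H$ and $\cM$.

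First I would handle $y=\cM$ and, more generally, any solution satisfying $H(x)\leq y(x)\leq\alpha(x)$ for small $x$: this is exactly Proposition~\ref{c10t-002}, so $y(0)=0$ and $y'(0)=0$. Next, suppose $y$ lies strictly between $\cM$ and $V$ near the origin. I would show $y(x)<\alpha(x)$ for $x$ small: since $\alpha$ is the isocline for slope $\vep^{-1}$ and $\cM$ passes through $\Gamma_1$, and since (Proposition~\ref{prop0016}(b)) $\cM(x)<\alpha(x)$ for all $x>0$, the solution $y$ — lying above $\cM$ — a priori could exceed $\alpha$. Here I would use the nested-antifunnel machinery: pick any $c\in(0,\vep^{-1})$ and recall from Proposition~\ref{prop0016} that the isocline $F(x,c)$ is a strong lower fence on $(0,\xi(c))$; more usefully, for the thinnest antifunnel $\Gamma_1$ the upper fence $H$ is valid on all of $(0,\infty)$, so any solution entering the strip $H<y<\alpha$ from above must have crossed $\alpha$ at some $x_0>0$, and by the fence property of $\alpha$ (strong lower fence, so solutions cross it only upward as $x$ increases) it then stays below $\alpha$ for all smaller $x$ down to $0$. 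Thus near $x=0$ we have $H(x)<y(x)<\alpha(x)$ and Proposition~\ref{c10t-002} applies again.

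The remaining case is $H(x)\leq y(x)\leq\cM(x)$ near $0$, and here $y(x)\leq\cM(x)<\alpha(x)$ already gives the sandwich $0\leq H(x)\leq y(x)<\alpha(x)$ on $(0,a')$, so the Squeeze Theorem argument of Proposition~\ref{c10t-002} — using $\lim_{x\to0^+}\alpha(x)=0$ and $\lim_{x\to0^+}\alpha(x)/x=0$ — delivers $y(0)=0$ and $y'(0)=0$ verbatim. So the entire proof amounts to: observe that a solution in $\Gamma_0$ near the origin is, after possibly shrinking the interval, trapped between $H$ and $\alpha$ (equivalently, inside $\Gamma_1$), then invoke Proposition~\ref{c10t-002}.

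**The main obstacle** is the case of a solution lying strictly above $\cM$: I must rule out that it escapes \emph{upward} through $\alpha$ as $x\to0^+$, i.e. that $y(x)>\alpha(x)$ for arbitrarily small $x$. The clean way is to exploit that $\alpha$ is a strong lower fence on all of $(0,\infty)$ (established in the proof of Theorem~\ref{c10t-001}(a)): along $\alpha$ we have $\alpha'(x)<f(x,\alpha(x))$, so no solution can touch $\alpha$ from above and then dip below it as $x$ decreases — once $y(x_0)<\alpha(x_0)$ for one small $x_0$, this persists for all $x\in(0,x_0)$. Since $y<V$ near $0$ but $\alpha<V$ with $\alpha(x)-V(x)\not\to0$, a short argument comparing $y$ with $\alpha$ via the fence inequality forces $y(x)<\alpha(x)$ for all sufficiently small $x>0$; alternatively, one notes $\cM(x)<y(x)$ combined with $\cM$ being the unique solution in $\Gamma_1$ means $y$ must exit $\Gamma_1$ as $x$ increases, not as $x$ decreases, hence $y$ enters $\Gamma_1$ near the origin. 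Everything else is a direct citation of Proposition~\ref{c10t-002}.
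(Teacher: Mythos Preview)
Your overall strategy---split into the cases $y\leq\cM$ and $\cM<y<V$, then trap $y$ below some isocline $F(x,c)$ near the origin and invoke the Squeeze-Theorem argument of Proposition~\ref{c10t-002}---is the right shape, and the case $y\leq\cM$ is indeed immediate from Proposition~\ref{c10t-002}. The problem is the case $\cM<y<V$: neither of your two proposed justifications for ``$y(x)<\alpha(x)$ for all small $x$'' actually works as written.

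Your fence argument correctly observes that, because $\alpha$ is a strong lower fence, any crossing of $\alpha$ by a solution goes from below to above as $x$ increases; hence \emph{if} $y(x_0)<\alpha(x_0)$ for one $x_0$, then $y<\alpha$ on $(0,x_0)$. But you never establish the existence of such an $x_0$. Your alternative---``$\cM$ is the unique solution in $\Gamma_1$, so $y$ must exit $\Gamma_1$ as $x$ increases, hence $y$ is in $\Gamma_1$ near the origin''---is circular: uniqueness of $\cM$ in $\Gamma_1$ only says no \emph{other} solution lies in $\Gamma_1$ for \emph{all} $x>0$; it says nothing about whether $y$ is ever in $\Gamma_1$, nor in which direction it exits. (The remark about $\alpha(x)-V(x)\not\to0$ concerns $x\to\infty$, not $x\to0^+$, and is a red herring here.) Your approach can in fact be rescued by a short direct argument: if $\alpha\leq y<V$ on $(0,b)$ then $y'\geq\vep^{-1}$ there and $y(x)\to0$ (since $y<V\to0$), whence $y(x)=\int_0^x y'\geq x/\vep=V(x)$, a contradiction. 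But this step is missing from your proposal.

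The paper takes a different and cleaner route for the case $\cM<y<V$: it invokes the concavity results already established in \S\ref{sec005} (Table~\ref{tab11.001}), which say $y$ is concave up on $(0,a)$. Setting $c:=y'(a/2)$, concavity gives $y'(x)<c$ for $x<a/2$, and the isocline structure then forces $y(x)<F(x,c)$ on $(0,a/2)$; since $F(x,c)\to0$ and $F(x,c)/x\to0$ as $x\to0^+$, the Squeeze Theorem finishes. Note that the paper does \emph{not} try to push $y$ below $\alpha$ specifically---it uses whatever isocline $F(\cdot,c)$ the solution happens to meet at $x=a/2$.
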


\begin{proof}
These limits have already been established if $y$ is the slow
manifold or if $y$ lies below the slow manifold $\cM$.  Hence, we
will assume that
\[
    \cM(x) < y(x) < V(x)
    \mymbox{for all}
    x \in (0,a).
\]
Let ${c:=y'\rb{\fracslash{a}{2}}}$. We know from
Table~\ref{tab11.001} that $y$ is concave up on
$\rb{0,\fracslash{a}{2}}$. Since ${\cM(x)>0}$ for
${x\in\rb{0,\fracslash{a}{2}}}$, we thus have
\[
    0 < y(x) < F(x,c)
    \mymbox{for all}
    x \in \rb{ 0, \shalf \, a },
\]
where $F$ is the function given in \eqref{eq11.006}. Note that
\[
    \lim_{x \to 0^+} F(x,c) = 0
    \mymbox{and}
    \lim_{x \to 0^+} \frac{F(x,c)}{x} = 0.
\]
It follows from the Squeeze Theorem that we can take ${y(0)=0}$.
Now, observe that
\[
    0 < \frac{y(x)-y(0)}{x-0} < \frac{F(x,c)}{x}.
\]
Again by the Squeeze Theorem, we see that we can take ${y'(0)=0}$.
\end{proof}

\begin{theorem} \label{thm0012}
Let $y(x)$ be a scalar solution to \eqref{eq11.002} lying inside
$\Gamma_0$ and consider the coefficients $\seq{b}{n}{2}{\infty}$
given in \eqref{eq11.019}. Then,
\[
    y(x) \sim \sum_{n=2}^\infty b_n x^n
    \mymbox{as}
    x \to 0^+.
\]
\end{theorem}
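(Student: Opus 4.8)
The plan is to invoke the Centre Manifold Theorem. First I would recast the scalar equation \eqref{eq11.002} as (part of) an autonomous planar system near the origin in a form where the Centre Manifold Theorem applies: the natural choice is the planar system \eqref{eq11.001} itself, whose Jacobian at the origin is $\diag{0,-1}$, so the $x$-axis direction is the centre direction and the $y$-direction is the (exponentially attracting) stable direction. A scalar solution $y(x)$ in $\Gamma_0$ corresponds to an invariant curve through the origin; Proposition~\ref{prop0020} (together with the earlier Proposition~\ref{c10t-002}) gives that this curve has $y(0)=0$ and $y'(0)=0$, so it is tangent to the centre subspace and is therefore (a branch of) a local centre manifold at the origin.

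Second, I would appeal to the smoothness assertion in centre manifold theory: for a $C^k$ vector field (here $\vec{g}$ is real-analytic away from the vertical isocline, and in particular $C^\infty$ in a neighbourhood of the origin since $V(0)=0$ but the singular locus $\vep xy - x^2 = 0$ only touches the origin, and inside $\Gamma_0$ we stay off it for $x>0$), every local centre manifold is $C^k$ for each finite $k$; hence $y\in C^\infty$ on a right-neighbourhood of $0$ after the extension $y(0)=0$, $y'(0)=0$. Consequently $y(x)$ has an asymptotic Taylor expansion $y(x)\sim\sum_{n\ge 0} c_n x^n$ as $x\to 0^+$ in the sense of asymptotic series (the Taylor polynomials agree with $y$ to the stated order), where $c_n = y^{(n)}(0)/n!$.

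Third, I would show the coefficients $c_n$ are forced: substituting the (formal) asymptotic series into the rewritten equation \eqref{eq11.016} and matching coefficients of $x^n$ is legitimate because $y$ is $C^\infty$, so the equation $\vep xyy' - x^2 y' - x^2 + y + \vep xy = 0$ holds identically, and differentiating it repeatedly at $x=0$ yields exactly the recursion that determines $b_0=b_1=0$, $b_2=1$, $b_3=2-\vep$, and \eqref{eq11.019} for $n\ge 4$. Since this recursion has a unique solution, $c_n = b_n$ for all $n$, giving $y(x)\sim\sum_{n=2}^\infty b_n x^n$ as claimed. (It is worth noting we are \emph{not} claiming the series converges to $y$ — only that it is the asymptotic expansion — which is exactly the content of the theorem's ``$\sim$''; different solutions in $\Gamma_0$ share the same asymptotic series, the familiar non-uniqueness of centre manifolds.)

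The main obstacle is the first step: verifying that the hypotheses of the Centre Manifold Theorem genuinely apply here, in particular that each scalar solution in $\Gamma_0$ really does extend to a (one-sided) centre manifold of the planar system and that the standard $C^k$-smoothness conclusion is available despite the vector field $\vec{g}$ being only locally defined (the denominator $-x^2+\vep xy$ in $f$ vanishes at the origin, though $\vec{g}$ in \eqref{eq11.001} is a genuine polynomial vector field, so this is really a non-issue once one works with the planar system rather than the scalar one). One must also be slightly careful that we only get a \emph{one-sided} statement ($x\to 0^+$), since $\Gamma_0$ lives in $x>0$; this is handled by restricting attention to the parabolic sector containing $S$, as already remarked after Theorem~\ref{thm0011}. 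The coefficient-matching in the third step is routine once smoothness is in hand.
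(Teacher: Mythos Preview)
Your approach is essentially the paper's: identify each scalar solution in $\Gamma_0$ as a centre manifold of the polynomial planar system \eqref{eq11.001} via Proposition~\ref{prop0020}, then invoke centre manifold theory to obtain the common asymptotic Taylor series whose coefficients are forced by the recursion~\eqref{eq11.019}.

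One technical point in your Step~2 deserves tightening. The standard centre manifold results (e.g.\ Carr) assert, for a $C^\infty$ vector field, the \emph{existence} of a $C^k$ centre manifold for each $k$ and that any two centre manifolds agree to order $x^k$ at the origin; they do \emph{not} assert that an arbitrary centre manifold is itself $C^k$, let alone $C^\infty$, at the fixed point. So the inference ``every local centre manifold is $C^k$ for each finite $k$; hence $y\in C^\infty$'' is not a direct citation and would need separate justification. The paper sidesteps this by the cleaner route: take one centre manifold $u$ with the asymptotic expansion (existence plus the approximation theorem), and then use the order-$k$ agreement property to conclude $y(x)-u(x)=\bigO{x^k}$ for every $k$, which yields the asymptotic statement without ever claiming smoothness of $y$ at $0$. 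Your Step~3 and the remark about non-uniqueness are exactly right; only the packaging of the centre manifold citation needs this adjustment.
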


\begin{proof}
The Centre Manifold Theorem guarantees that there is a solution
$u(x)$ to \eqref{eq11.002} such that
\[
    u(x) \sim \sum_{n=2}^\infty b_n x^n
    \mymbox{as}
    x \to 0^+.
\]
Note that the Taylor coefficients of the series for $u(x)$ must be
$\seq{b}{n}{2}{\infty}$ since they are generated uniquely by the
differential equation. Since $y(x)$ is a centre manifold, it follows
from centre manifold theory that
\[
    y(x) - u(x) = \bigO{x^k}
    \mymbox{as}
    x \to 0^+
\]
for any ${k\in\setb{2,3,\ldots}}$. See, for example, Theorem~1 on
page~16, Theorem~3 on page~25, and properties (1) and (2) on page~28
of \cite{Carr}. The conclusion of the theorem follows.
\end{proof}

\begin{remark}
For analytic systems of ordinary differential equations for which
the Centre Manifold Theorem applies, if the Taylor series for a
centre manifold has a nonzero radius of convergence, then the centre
manifold is unique.  Since all solutions  $y$ to \eqref{eq11.002}
lying inside $\Gamma_0$ are centre manifolds, we can conclude that
the Taylor series  ${\sum_{n=2}^\infty b_n x^n}$ has radius of
convergence zero.
\end{remark}

\subsection{Planar Solutions}

We can use the isoclines to extract the leading-order behaviour of
planar solutions as time tends to infinity.

\begin{proposition} \label{prop0005}
Let $\vec{x}(t)$ be the planar solution to \eqref{eq11.001} with
initial condition ${\vec{x}(0)=\vec{x}_0}$, where
${\vec{x}_0\in\setb{\vec{x} \in S \st x > 0}}$. Then,
\[
    x(t) = \frac{1}{t} + \vep \, \frac{\lnb{t}}{t^2} + \litO{\frac{\lnb{t}}{t^2}}
    \mymbox{and}
    y(t) = \frac{1}{t^2} + 2 \vep \, \frac{\lnb{t}}{t^3} + \litO{\frac{\lnb{t}}{t^3}}
    \mymbox{as}
    t \to \infty.
\]
\end{proposition}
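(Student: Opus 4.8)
The plan is to bootstrap from the known asymptotic behaviour of the scalar solutions near the origin (Theorem~\ref{thm0012}) together with the monotone decay $\vec{x}(t)\to\vec{0}$ established in Theorem~\ref{thm0011}. By Theorem~\ref{thm0011}(b) the orbit enters $\Gamma_0$, so for $t$ large the orbit-in-phase-space is a scalar solution $y=y(x)$ inside $\Gamma_0$, and by Theorem~\ref{thm0012} it satisfies $y(x)=x^2+(2-\vep)x^3+\bigO{x^4}$ as $x\to0^+$. The idea is to substitute this relation into the $\dot x$ equation of \eqref{eq11.001} to get a closed (asymptotic) scalar ODE for $x(t)$, integrate it, and then recover $y(t)$ from $y(t)=y(x(t))$.

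First I would write $\dot x = -x^2 + \vep x y = -x^2 + \vep x\bigl(x^2 + \bigO{x^3}\bigr) = -x^2 + \vep x^3 + \bigO{x^4}$, valid for $t$ large since $x(t)\to0$. Thus $\dot x = -x^2\bigl(1 - \vep x + \bigO{x^2}\bigr)$. Dividing, $-\dot x/x^2 = 1 - \vep x + \bigO{x^2}$, i.e. $\frac{d}{dt}\!\rb{\tfrac1x} = 1 - \vep x + \bigO{x^2}$. To leading order $1/x \sim t$, so $x\sim 1/t$; feeding this back, $\frac{d}{dt}(1/x) = 1 - \vep/t + \litO{1/t}$, and integrating from some large $t_0$ gives $1/x = t - \vep\lnb{t} + \litO{\lnb{t}}$. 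Inverting this asymptotic relation (write $1/x = t\bigl(1 - \vep\,\lnb{t}/t + \litO{\lnb{t}/t}\bigr)$, so $x = \tfrac1t\bigl(1 + \vep\,\lnb{t}/t + \litO{\lnb{t}/t}\bigr)$) yields the claimed expansion $x(t) = \tfrac1t + \vep\,\lnb{t}/t^2 + \litO{\lnb{t}/t^2}$. For $y$, substitute into $y(x) = x^2 + \bigO{x^3}$: with $x = \tfrac1t + \vep\,\lnb{t}/t^2 + \cdots$ one gets $x^2 = \tfrac1{t^2} + 2\vep\,\lnb{t}/t^3 + \litO{\lnb{t}/t^3}$, and the $\bigO{x^3} = \bigO{1/t^3}$ error is absorbed into $\litO{\lnb{t}/t^3}$, giving the stated formula for $y(t)$.

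The main obstacle is making the informal "integrate and invert" steps rigorous: one must justify that integrating the error term $\bigO{x^2} = \bigO{1/t^2}$ (after using $x\sim1/t$) contributes only $\bigO{1/t} = \litO{\lnb{t}/t}$ to $1/x$, and that this does not corrupt the $\vep\lnb{t}$ term; and one must justify the inversion of $t \mapsto 1/x(t)$ at the level of the stated error orders. Both are standard asymptotic bootstrapping arguments — establish $x\sim1/t$ first, substitute to get the next order, then control the remainder by an explicit integral estimate — but they require a little care with the $\litO{\cdot}$ bookkeeping. A secondary point is to confirm that for the excluded initial data with $y_0$ possibly outside $\Gamma_0$ (but $x_0>0$), Theorem~\ref{thm0011}(b) still puts the orbit into $\Gamma_0$ in finite time, so that the scalar-solution representation applies; this is immediate from the earlier result. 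Everything else is routine computation with the coefficients $b_2=1$, $b_3=2-\vep$ from \eqref{eq11.019}.
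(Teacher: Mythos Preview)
Your argument is correct. It coincides in substance with the paper's \emph{second} derivation of the $x(t)$ asymptotics, which appears immediately after the proof of Proposition~\ref{prop0005}: there the authors rewrite $\dot{x}=-x^2+\vep xy$ as the integral equation $1/x(t)-1/x_0 = t-\vep\int_0^t y(s)/x(s)\,ds$ and bootstrap twice using $y/x\sim x\sim 1/t$ from Theorem~\ref{thm0012}, exactly as you do in differential form via $d(1/x)/dt = 1-\vep y/x$. Your handling of the $\litO{\cdot}$ bookkeeping is also what the paper formalises in Lemma~\ref{lem0001}.

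The paper's \emph{primary} proof, however, takes a genuinely different route and does not rely on Theorem~\ref{thm0012} for the $x(t)$ part. It uses the isocline structure directly: for any $c>0$ the orbit eventually satisfies $H(x)\le y\le F(x,c)$, which yields the two-sided differential inequality $-x^2/(1+\vep x)\le\dot{x}\le -x^2/(1+bx)$ with $b=\vep/K(c)$. Each bounding ODE $\dot{u}=-u^2/(1+au)$ is solved explicitly via the Lambert $W$ function, whose known asymptotics give $\varphi(t;a,\cdot,\cdot)=1/t+a\ln t/t^2+\litO{\ln t/t^2}$; squeezing and letting $c\to0^+$ (so $b\to\vep$) then pins down the coefficient $\vep$. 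The $y(t)$ expansion is obtained, as you do, from Theorem~\ref{thm0012}. Your bootstrap is shorter and more elementary; the paper's comparison argument is more self-contained in that it extracts the leading $x(t)$ behaviour from the phase-plane geometry alone, without first invoking the centre-manifold expansion.
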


\begin{proof}
Let ${c>0}$ be fixed and arbitrary. We know from
Theorem~\ref{thm0011}, Table~\ref{tab11.001}, and the isocline
structure that there exists a ${T \geq 0}$ such that
\begin{equation} \label{eq12.008}
    H(x(t)) \leq y(t) \leq F(x(t),c)
    \mymbox{for all}
    t \geq T,
\end{equation}
where $F$ is given in \eqref{eq11.006}. Using \eqref{eq11.001},
\eqref{eq11.004}, \eqref{eq11.006}, and \eqref{eq12.008}, we can see
that $x(t)$ satisfies
\begin{equation} \label{eq-002}
    -\frac{ x(t)^2 }{ 1 + \vep x(t) } \leq \dot{x}(t) \leq -\frac{ x(t)^2 }{ 1 + b x(t) }
    \mymbox{for all}
    t \geq T,
\end{equation}
where ${b:=\fracslash{\vep}{K(c)}}$ and $K$ is the function defined
in \eqref{eq11.005}. Note that the solution of the initial value
problem
\[
    \dot{u} = -\frac{u^2}{1+au},
    \quad
    u(t_0) = u_0,
\]
where ${a,u_0>0}$ and ${t_0 \geq 0}$ are constants, is
\[
    u(t) = \varphi(t;a,t_0,u_0) := \frac{ 1 }{ a \, \LambertW{ \sqb{ \frac{1}{au_0} \, \expb{\frac{1}{au_0}} } \expb{ \frac{t-t_0}{a} } } },
\]
where $W$ is the Lambert $W$ function \cite{Corless}. A simple
comparison argument applied to \eqref{eq-002} establishes
\begin{equation} \label{eq12.009}
    \varphi(t;\vep,T,x(T)) \leq x(t) \leq \varphi(t;b,T,x(T))
    \mymbox{for all}
    t \geq T.
\end{equation}

A standard property of the Lambert $W$ function is
\[
    \LambertW{t} = \lnb{t} - \lnb{\lnb{t}} + \litO{\lnb{\lnb{t}}}
    \mymbox{as}
    t \to \infty.
\]
Consequently, it can be shown
\[
    \LambertW{\exp{t}} = t - \lnb{t} + \litO{\lnb{t}}
    \mymbox{and}
    \frac{1}{\LambertW{\exp{t}}} = \frac{1}{t} + \frac{\lnb{t}}{t^2} + \litO{\frac{\lnb{t}}{t^2}}
    \mymbox{as}
    t \to \infty.
\]
With a little manipulation, it can be verified that
\[
    \varphi(t;a,t_0,u_0) = \frac{1}{t} + a \, \frac{\lnb{t}}{t^2} + \litO{\frac{\lnb{t}}{t^2}}
    \mymbox{as}
    t \to \infty
\]
for any ${a,u_0>0}$ and ${t_0 \geq 0}$. It follows from
\eqref{eq12.009} that
\[
    \liminf_{t\to\infty} \sqb{ x(t) - \frac{1}{t} } \sqb{ \frac{t^2}{\lnb{t}} } \geq \vep
    \mymbox{and}
    \limsup_{t\to\infty} \sqb{ x(t) - \frac{1}{t} } \sqb{ \frac{t^2}{\lnb{t}} } \leq b.
\]
Since $c$ was arbitrary with ${K(c) \to 1}$ and ${b\to\vep}$ as ${c
\to 0^+}$,
\[
    \lim_{t\to\infty} \sqb{ x(t) - \frac{1}{t} } \sqb{ \frac{t^2}{\lnb{t}} }
    = \vep.
\]
This yields the desired conclusion for $x(t)$. The conclusion for
$y(t)$ follows from Theorem~\ref{thm0012}.
\end{proof}

It is possible to derive the expression for $x(t)$ in
Proposition~\ref{prop0005} without appealing to the isocline
structure and concavity. To achieve this, we will note that $x(t)$
satisfies the integral equation
\begin{equation} \label{eq12.005}
    \frac{1}{x(t)} - \frac{1}{x_0} = t - \vep \integral{0}{t}{\frac{y(s)}{x(s)}}{s}
\end{equation}
and then twice utilize the following easy-to-verify lemma.

\begin{lemma} \label{lem0001}
Let ${a\in\Rd{}}$ be a constant and let ${f,g:[a,\infty)\to\Rd{}}$
be nonnegative, integrable functions such that
${f(t)=g(t)+\litO{g(t)}}$ as ${t\to\infty}$. If $G(t)$ is an
antiderivative of $g(t)$ such that ${G(t)\to\infty}$ as
${t\to\infty}$, then ${\integral{a}{t}{f(s)}{s}=G(t)+\litO{G(t)}}$
as ${t\to\infty}$.
\end{lemma}

We know from Theorems~\ref{thm0011} and \ref{thm0012} that
\[
    x(t) = \litO{1},
    \quad
    y(t) = \litO{1},
    \mymbox{and}
    \frac{y(t)}{x(t)} + 1 = 1 + \litO{1}
    \mymbox{as}
    t \to \infty.
\]
It follows from Lemma~\ref{lem0001} that
\[
    \integral{0}{t}{\sqb{\frac{y(s)}{x(s)}+1}}{s}
    = t + \litO{t}
    \implies
    \integral{0}{t}{\frac{y(s)}{x(s)}}{s}
    = \litO{t}
    \mymbox{as}
    t \to \infty.
\]
By virtue of the integral equation \eqref{eq12.005},
\[
    \frac{1}{x(t)} = t \sqb{ 1 + \litO{1} }
    \implies
    x(t) = \frac{1}{t} + \litO{\frac{1}{t}}
    \mymbox{as}
    t \to \infty.
\]
To take this one step further, observe now that
\[
    \frac{y(t)}{x(t)} = \frac{1}{t} + \litO{\frac{1}{t}}
    \mymbox{as}
    t \to \infty,
\]
which follows from Theorem~\ref{thm0012}, and so by
Lemma~\ref{lem0001} we have
\[
    \integral{0}{t}{\frac{y(s)}{x(s)}}{s}
    = \lnb{t} + \litO{\lnb{t}}
    \mymbox{as}
    t \to \infty.
\]
By virtue of the integral equation \eqref{eq12.005} once again,
\[
    \frac{1}{x(t)} = t \sqb{ 1 - \vep \, \frac{\lnb{t}}{t} + \litO{\frac{\lnb{t}}{t}} }
    \implies
    x(t) = \frac{1}{t} + \vep \, \frac{\lnb{t}}{t^2} + \litO{\frac{\lnb{t}}{t^2}}
    \mymbox{as} t \to \infty.
\]

\section{All Solutions Must Enter the Antifunnel} \label{sec007}

Earlier, in Theorem~\ref{thm0011}, we showed that all solutions
$\vec{x}(t)$ to the planar system \eqref{eq11.001}, except for the
trivial solutions, eventually enter the trapping region $\Gamma_0$.
Here, we show that $\Gamma_1$ is itself a trapping region.

\begin{samepage}
\begin{theorem}
Let $\vec{x}(t)$ be the solution to \eqref{eq11.001} with
${\vec{x}(0)=\vec{x}_0}$, where ${\vec{x}_0\in\setb{\vec{x} \in S
\st x>0}}$.
\begin{enumerate}[(a)]
    \item
        There is a ${t^* \geq 0}$ such that ${\vec{x}(t)\in\Gamma_1}$ for all ${t
        \geq t^*}$.
    \item
        Define the region
        \[
            \Gamma_2 := \setb{ ~ (x,y) \st x > 0, ~ \cY(x) \leq y \leq \alpha(x) ~ }.
        \]
        Then, there is a ${t^* \geq 0}$ such that ${\vec{x}(t)\in\Gamma_2}$ for all ${t
        \geq t^*}$.
\end{enumerate}
\end{theorem}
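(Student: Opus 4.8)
The plan is to show that each of the curves $\alpha$, $H$, and $\cY$ is crossed by planar trajectories in only one direction, and then combine this with Theorem~\ref{thm0011}(b), which already places $\vec{x}(t)$ inside $\Gamma_0$ for large $t$. For part~(a), once $\vec{x}(t)\in\Gamma_0$ the trajectory lies between $H$ and $V$, so I only need to control crossings of $\alpha$. First I would observe that $\alpha$ is the isocline for slope $\vep^{-1}$, so along $\alpha$ the scalar slope is $f(x,\alpha(x))=K^{-1}(\vep^{-1})=\vep^{-1}(1+\vep)>\vep^{-1}$, while $\alpha'(x)<\vep^{-1}$ everywhere (since $\alpha$ is concave up with horizontal-asymptote slope $\vep^{-1}$, as in the Claim of \S\ref{sec003}). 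Hence along $\alpha$ the vector field $\vec{g}$ points from the $\alpha$-side toward $H$: translated into the planar picture, $\dot{y}/\dot{x}=f>\alpha'$ and $\dot{x}<0$ there, so $y-\alpha(x)$ is strictly decreasing as the trajectory moves, i.e.\ trajectories can only cross $\alpha$ downward, into $\Gamma_1$. Combined with the fact that $\Gamma_1$ is bounded above by $\alpha$ and below by $H$ (and $H$ is not exited, as shown in Theorem~\ref{thm0011}(a)), this shows $\Gamma_1$ is positively invariant; and since $\cM\subset\Gamma_1$ attracts, or more simply since the trajectory is trapped in $\Gamma_0$ and cannot stay forever in $\Gamma_0\setminus\Gamma_1$ (there $y'\ge\vep^{-1}>\alpha'$ forces $y-\alpha(x)$ to decrease and, by the comparison argument used in the proof of Theorem~\ref{c10t-001}(b), the trajectory would have to cross into $y>V$, a contradiction with positive invariance of $\Gamma_0$), the trajectory enters $\Gamma_1$ in finite time.

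For part~(b), the region $\Gamma_2$ sits inside $\Gamma_1$ between the curve of inflection points $\cY$ and $\alpha$. The upper boundary $\alpha$ is handled exactly as in part~(a): trajectories cross it only downward. For the lower boundary $\cY$, the key input is Proposition~\ref{prop0017}, which gives $h(x,y)<0$ strictly below $\cY$ (within $\Gamma_1$) and $h(x,y)>0$ strictly above it. Recalling $y''=p\cdot h$ with $p>0$ (equations \eqref{eq11.012}--\eqref{eq11.013}), and that along $\cY$ we have $h=0$ but $h'<0$ by \eqref{eq11.014} (using $y>0$ and $y'<\vep^{-1}$ there, as in the proof of Proposition~\ref{prop0018}), I would argue that a scalar solution touching $\cY$ from above must have been below $\cY$ just before — equivalently, in forward $x$ (which is backward time, since $\dot x<0$) the solution crosses $\cY$ from above to below, so in forward time it crosses from below $\cY$ to above $\cY$. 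Thus $\cY$, like $H$, is only crossed in the ``upward'' direction by planar trajectories. Therefore $\Gamma_2$ is positively invariant once entered. To see it is entered: by part~(a) the trajectory is eventually in $\Gamma_1=\{H\le y\le\alpha\}$; if it remained forever in $\Gamma_1\setminus\Gamma_2=\{H\le y<\cY\}$, then $h<0$ there so $y''<0$ along the scalar solution, making $y(x)$ concave down on an unbounded $x$-range near $0$ — but wait, $x(t)$ decreases to $0$, so this is a concavity statement on $(0,x_0)$; combining $y'(0)=0$ (Proposition~\ref{prop0020}) with concavity down forces $y'<0$ for small $x>0$, contradicting $y>H>0$ with $y\to0$. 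Hence the trajectory must leave the strip below $\cY$, i.e.\ enter $\Gamma_2$, in finite time.

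The main obstacle I anticipate is making the ``$\cY$ is crossed only upward'' step fully rigorous: $\cY$ is a curve of \emph{inflection} points, not a fence in the sense of \eqref{eq-001} (along $\cY$ one has $h=0$, hence $y''=0$, which is the borderline case), so the monotone-crossing argument must be run at the level of the sign of $h$ along the trajectory, using $h'<0$ wherever $h=0$ to guarantee that $h$ changes sign from $+$ to $-$ as $x$ decreases through a zero, equivalently from $-$ to $+$ as time increases. This is exactly the structure exploited in Lemma~\ref{lem4.001}(b) and in the concavity propositions of \S\ref{sec005}, so the tools are all in place; the care needed is in the bookkeeping of the two orientation reversals (forward time vs.\ increasing $x$, since $\dot x<0$ on $\Gamma_1$). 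A secondary technical point is ruling out a trajectory that asymptotically approaches $\cY$ or $\alpha$ without crossing — this is dispatched by the same $\omega$-limit-set argument used twice in Theorem~\ref{thm0011}: the $\omega$-limit set is invariant and contained in the relevant compact piece, hence is $\{\vec 0\}$, contradicting $y>H>0$ away from the origin.
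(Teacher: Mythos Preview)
Your positive-invariance arguments are fine, but the \emph{entry} step in part~(a) has a genuine gap. You invoke ``the comparison argument used in the proof of Theorem~\ref{c10t-001}(b)'' to conclude that an orbit lingering in $\Gamma_0\setminus\Gamma_1$ would be forced across $V$. That comparison, however, runs in \emph{increasing} $x$, which is \emph{backward} time here (since $\dot x<0$ throughout $\Gamma_0$): it only shows the scalar solution exits through $V$ at some large $x$, corresponding to some $t<t^*$, and yields no contradiction for the forward orbit. Your $\omega$-limit remark does not close this either, since $\omega(\vec{x}_0)=\{\vec 0\}\subset\overline{\Gamma_1}$ is perfectly consistent with the orbit hugging $\alpha$ from above for all finite $t$. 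The paper's fix is precisely the device you already deploy for part~(b): Proposition~\ref{prop0020} gives $y'(0)=0$, while on $\Gamma_0\setminus\Gamma_1$ the isocline structure forces $y'\ge\vep^{-1}$; by the Mean Value Theorem $y(x)/x\ge\vep^{-1}$ for small $x$, contradicting $y'(0)=0$. Hence the orbit lies in $\Gamma_1$ for all small $x$, i.e.\ all large $t$, and positive invariance finishes.

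Your part~(b) is essentially correct and is a spelled-out version of the paper's terse citation of Table~\ref{tab11.001} and Proposition~\ref{prop0017}: once in $\Gamma_1$, either $y\ge\cM>\cY$ (already in $\Gamma_2$), or $y$ lies between $H$ and $\cM$ and Table~\ref{tab11.001} supplies a unique inflection at $x_1$ with $y$ concave up---equivalently $y>\cY$, by Proposition~\ref{prop0017}---on $(0,x_1)$. Your $h'<0$-at-zeros mechanism for one-sided crossing of $\cY$ is exactly what underlies that uniqueness in \S\ref{sec005}. One small caution: the step ``$y'(0)=0$ plus concave-down forces $y'<0$'' needs the same MVT bridge, since Proposition~\ref{prop0020} establishes $y'(0)=0$ only as a difference-quotient limit, not as $\lim_{x\to0^+}y'(x)$.
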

\end{samepage}

\needspace{2.0cm}
\begin{proof}
\begin{enumerate}[(a)]
    \item
        We know from Theorem~\ref{thm0011} that $\vec{x}(t)$ eventually
        enters and stays in $\Gamma_0$. Let $y(x)$ be the corresponding
        scalar solution to \eqref{eq11.002}. Then, we can say ${y'(0)=0}$.
        Appealing to the isocline structure, this means that $\vec{x}(t)$
        has entered $\Gamma_1$. Furthermore, since
        ${\vec{g}\dotprod\normal<0}$ along the horizontal and $\alpha$
        isoclines which form the boundaries of the region in question, we
        see that $\Gamma_1$ is positively invariant.
    \item
        It follows from Table~\ref{tab11.001},
        Proposition~\ref{prop0017}, and the previous part of the theorem.
\end{enumerate}
\end{proof}

\section{Properties of the Slow Manifold} \label{sec008}

In this section, we will highlight some properties of the slow
manifold.

\begin{proposition} \label{prop0019}
The slow manifold ${y=\cM(x)}$ satisfies, for all ${x>0}$,
\[
    0 < H(x) < \cY(x) < \cM(x) < \alpha(x).
\]
Furthermore,
\[
    \lim_{x \to 0^+} \cM(x) = 0.
\]
\end{proposition}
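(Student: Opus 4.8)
The plan is to assemble Proposition~\ref{prop0019} almost entirely from results already proved, so that little new work is required. The chain of inequalities $0 < H(x) < \cY(x) < \cM(x) < \alpha(x)$ for all $x>0$ has three nontrivial links. The middle link $H(x) < \cY(x) < \cM(x)$ is precisely Proposition~\ref{prop0018}, so I would simply cite it. The rightmost link $\cM(x) < \alpha(x)$ is part of the conclusion of Theorem~\ref{c10t-001}(a), since $\cM$ is by construction the unique solution lying in $\Gamma_1 = \{(x,y) : x>0,\ H(x) \le y \le \alpha(x)\}$; strictness of $\cM(x) < \alpha(x)$ comes from the Antifunnel Theorem, which gives $\beta(x) < y(x) < \alpha(x)$ strictly (here $\beta = H$). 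The leftmost link $0 < H(x)$ is immediate from the formula $H(x) = x^2/(1+\vep x)$ for $x>0$. So the first paragraph of the proof just strings these citations together.

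For the limit $\lim_{x\to 0^+}\cM(x) = 0$, I would invoke Proposition~\ref{c10t-002} (or equivalently Proposition~\ref{prop0020}), which already establishes that a solution lying inside $\Gamma_1$ on an interval $(0,a)$ can be extended continuously with $\cM(0) = 0$ and $\cM'(0) = 0$. Alternatively — and perhaps cleaner to state in-line — one can get the limit directly from the Squeeze Theorem: since $0 < H(x) < \cM(x) < \alpha(x)$ and both $H(x) \to 0$ and $\alpha(x) = x^2/(K(\vep^{-1}) + \vep x) \to 0$ as $x \to 0^+$, the Squeeze Theorem forces $\cM(x) \to 0$. I would likely present the Squeeze Theorem version since it is self-contained and reuses the bounding curves from the first half of the proposition.

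I do not anticipate a genuine obstacle here, since the proposition is essentially a bookkeeping consolidation of Theorem~\ref{c10t-001}, Proposition~\ref{c10t-002}, and Proposition~\ref{prop0018}. The only point requiring a moment's care is making sure the inequalities are strict everywhere on $(0,\infty)$: strictness of $\cM < \alpha$ and $H < \cM$ is built into the antifunnel/centre-manifold arguments already cited, and $\cY < \cM$ (as opposed to $\le$) was upgraded to strict in the second half of the proof of Proposition~\ref{prop0018} via the sign of $h'$ at a hypothetical zero. So the proof is short:

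\begin{proof}
The inequality $0 < H(x)$ for all $x > 0$ is immediate from the definition $H(x) = x^2/(1+\vep x)$ in \eqref{eq11.004}. The inequalities $H(x) < \cY(x) < \cM(x)$ for all $x > 0$ are the content of Proposition~\ref{prop0018}. Finally, $\cM(x) < \alpha(x)$ for all $x > 0$ follows from Theorem~\ref{c10t-001}(a), since $\cM$ is the unique solution to \eqref{eq11.002} lying in $\Gamma_1$ and the Antifunnel Theorem guarantees the strict inequality $H(x) < \cM(x) < \alpha(x)$. This establishes the chain of inequalities.

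For the limit, recall from \eqref{eq11.009} that $\alpha(x) = x^2 / \rb{ K\rb{\vep^{-1}} + \vep x }$, so that
\[
    \lim_{x \to 0^+} H(x) = 0
    \mymbox{and}
    \lim_{x \to 0^+} \alpha(x) = 0.
\]
Since $0 < H(x) < \cM(x) < \alpha(x)$ for all $x > 0$, the Squeeze Theorem yields ${\lim_{x\to 0^+}\cM(x) = 0}$. (This also follows from Proposition~\ref{c10t-002}.)
\end{proof}
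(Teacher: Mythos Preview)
Your proof is correct and matches the paper's own argument essentially exactly: the paper cites Theorem~\ref{c10t-001} and Proposition~\ref{prop0018} for the chain of inequalities and invokes the Squeeze Theorem for the limit. Your version simply spells out the individual links in more detail than the paper's two-line proof.
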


\begin{proof}
The first part follows from Theorem~\ref{c10t-001} and
Proposition~\ref{prop0018}. The second part follows from the Squeeze
Theorem.
\end{proof}

\begin{proposition}
The slow manifold ${y=\cM(x)}$ is concave up for all ${x>0}$.
\end{proposition}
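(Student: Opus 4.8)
The plan is to show that the auxiliary function $h$ (defined in \eqref{eq11.013}) satisfies $h(x)>0$ for all $x>0$ along the slow manifold $y=\cM(x)$, since by \eqref{eq11.012} and the positivity of $p(x)$ this is equivalent to $\cM''(x)>0$. We already have from Proposition~\ref{prop0019} that $\cY(x)<\cM(x)<\alpha(x)$ for all $x>0$, and from Proposition~\ref{prop0017} we know $h(x,y)>0$ precisely when $\cY(x)<y<\alpha(x)$. Therefore $h(x,\cM(x))>0$ for every $x>0$, which is exactly what we want, and the proof is essentially immediate.

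Concretely, first I would let $h(x):=h(x,\cM(x))$ be the auxiliary function evaluated along the slow manifold. Second, I would invoke Proposition~\ref{prop0019} to assert $\cY(x)<\cM(x)<\alpha(x)$ for all $x>0$. Third, I would apply Proposition~\ref{prop0017} (the middle-versus-outer case) to conclude $h(x)>0$ for all $x>0$. Fourth, since $p(x)>0$ for all $x>0$ (the slow manifold never meets the vertical isocline, as it lies below $\alpha<V$), equation \eqref{eq11.012} gives $\cM''(x)=p(x)h(x)>0$ for all $x>0$, so $\cM$ is concave up on $(0,\infty)$.

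There is essentially no obstacle here: the real work was already done in establishing the strict inequality $\cY(x)<\cM(x)$ in Proposition~\ref{prop0018}, whose proof used \eqref{eq11.014} to rule out a zero of $h$ along $\cM$. The only point requiring a word of care is that concavity at $x=0$ is not addressed by $h$ (as noted in the remark following \eqref{eq11.014}), so the statement is correctly restricted to $x>0$; one could separately note that $\cM''(0^+)$ exists and is nonnegative by continuity together with the Taylor expansion of Theorem~\ref{thm0012}, but that is not needed for the stated claim.
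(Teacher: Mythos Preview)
Your proposal is correct and follows essentially the same approach as the paper: invoke Proposition~\ref{prop0019} to place $\cM$ strictly between $\cY$ and $\alpha$, then apply Proposition~\ref{prop0017} to obtain $h(x,\cM(x))>0$, and conclude $\cM''(x)>0$ via \eqref{eq11.012}. The paper's proof is just the terse version of exactly this argument.
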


\begin{proof}
It follows from Propositions~\ref{prop0017} and \ref{prop0019} that
${h(x,\cM(x))>0}$ for all ${x>0}$, where $h$ is the function defined
in \eqref{eq11.013}. Since ${\sgn{\cM''(x)}=\sgn{h(x,\cM(x))}}$, it
must be that the slow manifold is concave up for all ${x>0}$.
\end{proof}

\begin{proposition}
The slope of the slow manifold ${y=\cM(x)}$ satisfies
\[
    0 < \cM'(x) < \vep^{-1}
    \mymbox{for all}
    x > 0.
\]
Furthermore,
\[
    \lim_{x \to 0^+} \cM'(x) = 0
    \mymbox{and}
    \lim_{x\to\infty} \cM'(x) = \vep^{-1}.
\]
\end{proposition}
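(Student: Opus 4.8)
The plan is to establish the three claims in order, drawing on the isocline structure and the fences already in hand. First, the two-sided bound $0<\cM'(x)<\vep^{-1}$ for all $x>0$: by Proposition~\ref{prop0019} the slow manifold lies strictly between $H$ and $\alpha$, both of which are isoclines for slopes $0$ and $\vep^{-1}$ respectively. Since $\cM'(x)=f(x,\cM(x))$ and $f$ is strictly increasing in $y$ on the strip $H(x)<y<\alpha(x)$ (this monotonicity of $f_y>0$ was established in the proof of Theorem~\ref{c10t-001}), we get $0=f(x,H(x))<f(x,\cM(x))<f(x,\alpha(x))=\vep^{-1}$, which is exactly the desired strict inequality.

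For the limit as $x\to0^+$: Proposition~\ref{c10t-002} already tells us that any solution inside $\Gamma_1$ — in particular $\cM$ — can be extended so that $\cM'(0)=0$. So this limit is immediate from the earlier proposition, and I would simply cite it.

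The harder direction is $\lim_{x\to\infty}\cM'(x)=\vep^{-1}$. Here I would squeeze $\cM'(x)=f(x,\cM(x))$ between the slopes $f(x,\cdot)$ of suitable isoclines that pinch $\cM$ from below and above as $x\to\infty$. From below: by Proposition~\ref{prop0016}(b), for every $c\in(0,\vep^{-1})$ the isocline $F(\cdot,c)$ lies below $\cM$ for all $x>\xi(c)$, so $\cM(x)>F(x,c)$ for large $x$, and by monotonicity of $f$ in $y$ this gives $\cM'(x)=f(x,\cM(x))>f(x,F(x,c))=c$ for all $x>\xi(c)$; hence $\liminf_{x\to\infty}\cM'(x)\geq c$, and letting $c\to(\vep^{-1})^-$ yields $\liminf_{x\to\infty}\cM'(x)\geq\vep^{-1}$. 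From above: $\cM(x)<\alpha(x)$ for all $x$, so $\cM'(x)<f(x,\alpha(x))=\vep^{-1}$ for all $x>0$, giving $\limsup_{x\to\infty}\cM'(x)\leq\vep^{-1}$. Combining the two bounds forces the limit to equal $\vep^{-1}$.

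I expect the main obstacle to be nothing deep — just making sure the monotonicity of $f$ in $y$ is being applied on a strip where it genuinely holds (namely $H(x)<y<V(x)$, where $f_y(x,y)=1/(x-\vep y)^2>0$), and checking that all the comparison isoclines $F(\cdot,c)$ with $0<c<\vep^{-1}$ stay within that strip, which follows from the isocline structure in \S\ref{sec003}. One could also simply invoke Claim~\ref{claim0006} in place of the direct monotonicity argument: the sign of $\cM(x)$ relative to the isocline through $(x,\cM(x))$ is tied to $\cM''$, but the cleaner route is the direct squeeze on $f(x,\cM(x))$ described above.
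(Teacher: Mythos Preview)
Your proposal is correct and follows essentially the same route as the paper: the bound $0<\cM'(x)<\vep^{-1}$ from Proposition~\ref{prop0019} plus the isocline structure (equivalently, $f_y>0$ below $V$), the limit at $0$ from Proposition~\ref{c10t-002} (the paper cites the slightly more general Proposition~\ref{prop0020}, but either suffices since $\cM\subset\Gamma_1$), and the limit at $\infty$ by squeezing $\cM'(x)$ between $c$ and $\vep^{-1}$ for $x>\xi(c)$ via Proposition~\ref{prop0016} and then letting $c\to(\vep^{-1})^-$. The only cosmetic difference is that you phrase the last step via $\liminf$/$\limsup$ whereas the paper invokes the Squeeze Theorem directly.
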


\begin{proof}
The first part is a consequence of Proposition~\ref{prop0019} and
the isocline structure. The first limit is a special case of
Proposition~\ref{prop0020}. To prove the second limit, let
${c\in\rb{0,\vep^{-1}}}$. It follows from Proposition~\ref{prop0016}
and the isocline structure that
\[
    c < \cM'(x) < \vep^{-1}
    \mymbox{for all}
    x > \xi(c),
\]
where $\xi$ is the function defined in \eqref{eq11.010}. Applying
\eqref{eq11.011} and the Squeeze Theorem gives the second limit.
\end{proof}

\begin{remark}
The justification which Fraser provides in \cite{Fraser1988} (just
before Theorem~1) that ${\cM'(x)\to\vep^{-1}}$ as ${x\to\infty}$ is
incorrect. The error is that the distance between the horizontal and
vertical isoclines does not tend to zero as $x$ tends to infinity.
Thus, the asymptotic behaviour of $\cM'(x)$ need not be the same as
the asymptotic behaviour of $H'(x)$ and $V'(x)$.
\end{remark}

\begin{proposition}
Asymptotically, the slow manifold can be written
\[
    \cM(x) \sim \sum_{n=2}^\infty b_n x^n
    \mymbox{as}
    x \to 0^+,
\]
where the coefficients $\seq{b}{n}{2}{\infty}$ are as in
\eqref{eq11.019}.
\end{proposition}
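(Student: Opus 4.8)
The plan is to recognize this proposition as an immediate corollary of Theorem~\ref{thm0012}. First I would observe that the slow manifold $y = \cM(x)$ is a scalar solution of the differential equation \eqref{eq11.002} that lies entirely inside $\Gamma_0$: indeed, Proposition~\ref{prop0019} records $0 < H(x) < \cM(x) < \alpha(x)$ for all $x > 0$, and since $\alpha$ is the isocline for slope $\vep^{-1}$ and therefore lies strictly below the vertical isocline $V$, we get $H(x) \leq \cM(x) \leq V(x)$ for all $x > 0$, which is precisely the statement that $\cM$ lies in $\Gamma_0$. (Alternatively this is immediate from Theorem~\ref{c10t-001}, which places $\cM$ inside $\Gamma_1 \subseteq \Gamma_0$.)

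Second, I would simply invoke Theorem~\ref{thm0012} with $y = \cM$. That theorem asserts that every scalar solution of \eqref{eq11.002} lying inside $\Gamma_0$ satisfies
\[
    y(x) \sim \sum_{n=2}^\infty b_n x^n
    \mymbox{as}
    x \to 0^+,
\]
with the coefficients $\seq{b}{n}{2}{\infty}$ determined by the recursion \eqref{eq11.019}. Taking $y = \cM$ then yields exactly the claimed expansion, since $\cM$ is one such solution.

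There is no real obstacle here: the substantive work --- producing, via the Centre Manifold Theorem, a centre manifold whose Taylor coefficients are the uniquely determined $b_n$, and showing that every solution in $\Gamma_0$ is a centre manifold so that all such solutions share the same asymptotic expansion to all orders --- was already done in Theorem~\ref{thm0012} together with Proposition~\ref{prop0020}. The only point to verify is the trivial membership $\cM \subseteq \Gamma_0$, which has in fact already been established in Proposition~\ref{prop0019}. If desired, I would append a short remark, parallel to the one following Theorem~\ref{thm0012}, noting that this series has radius of convergence zero, so it is genuinely only an asymptotic (not a convergent) expansion of the slow manifold near the origin.
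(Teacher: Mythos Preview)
Your proposal is correct and matches the paper's own proof essentially verbatim: the paper simply notes that the slow manifold is contained entirely in $\Gamma_0$ and then applies Theorem~\ref{thm0012}. Your added justification via Proposition~\ref{prop0019} (or Theorem~\ref{c10t-001}) and the optional remark about zero radius of convergence are fine elaborations but not needed beyond what the paper does.
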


\begin{proof}
Since the slow manifold is contained entirely in $\Gamma_0$, we can
apply Theorem~\ref{thm0012}.
\end{proof}

\begin{corollary} \label{cor0005}
The slow manifold satisfies
\[
    \cM(x) = H(x) + \bigO{x^3}
    \mymbox{as}
    x \to 0^+.
\]
Moreover, this statement would not be true if we replace $H(x)$ with
any other isocline $F(x,c)$.
\end{corollary}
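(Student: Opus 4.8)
The plan is to compare power series at $x = 0$ on both sides. The preceding proposition (equivalently, Theorem~\ref{thm0012}) gives the asymptotic expansion $\cM(x) \sim \sum_{n=2}^\infty b_n x^n$ as $x \to 0^+$, where by \eqref{eq11.019} we have $b_2 = 1$ and $b_3 = 2 - \vep$; in particular $\cM(x) = x^2 + \rb{2-\vep} x^3 + \litO{x^3}$ as $x \to 0^+$. On the other side, $H(x) = x^2 / \rb{1 + \vep x}$ is analytic at $x = 0$, and expanding the geometric series gives $H(x) = x^2 - \vep x^3 + \bigO{x^4}$.

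Subtracting, the order-$x^2$ terms cancel (both coefficients equal $1$) and the order-$x^3$ coefficient of $\cM(x) - H(x)$ is $\rb{2-\vep} - \rb{-\vep} = 2$. Hence $\cM(x) - H(x) = 2x^3 + \litO{x^3}$ as $x \to 0^+$, which is $\bigO{x^3}$; this proves the first assertion. (In fact $\cM(x) - H(x) \sim 2x^3$, so the estimate is sharp.)

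For the second assertion, observe that $H$ is itself an isocline, namely $H(x) = F(x,0)$ since $K(0) = 1$; thus ``any other isocline $F(x,c)$'' means $c \ne 0$, together with the two exceptional isoclines $y = 0$ (slope $-1$) and $y = V(x)$. For $c \notin \setb{0,-1}$, the expansion of $F(x,c) = x^2 / \rb{K(c) + \vep x}$ at $x = 0$ begins with $\frac{1}{K(c)} x^2 = \rb{1+c} x^2$, so $\cM(x) - F(x,c) \sim \rb{1 - \rb{1+c}} x^2 = -c\,x^2$, which is of exact order $x^2$ and therefore not $\bigO{x^3}$. For the isocline $y = 0$ we have $\cM(x) - 0 = \cM(x) \sim x^2$, and for the vertical isocline $\cM(x) - V(x) \sim -x/\vep$; neither is $\bigO{x^3}$. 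Hence $H$ is the only isocline matching $\cM$ to order $x^3$ at the origin.

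There is no substantial obstacle here: the only point requiring care is that ``$\sim$'' denotes an asymptotic expansion, so truncating $\cM(x)$ after the $x^3$ term with an $\litO{x^3}$ remainder is legitimate, and that the statement is one-sided as $x \to 0^+$. The conceptual content is simply that among all the isoclines, only the horizontal isocline has the correct leading coefficient ($b_2 = 1$) to approximate $\cM$ beyond order $x^2$, and it then happens to agree through order $x^2$ but disagrees at order $x^3$.
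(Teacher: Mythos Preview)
Your proof is correct and follows exactly the approach the paper indicates: comparing the asymptotic expansions of $\cM(x)$, $H(x)$, and $F(x,c)$ at $x=0$. The paper's own proof is simply the one-line remark that the result follows from this comparison; you have supplied the details accurately, including the sharp order $2x^3$ of the discrepancy and the handling of the exceptional isoclines.
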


\begin{proof}
It follows from a comparison of the asymptotic expansions for
$\cM(x)$, $H(x)$, and $F(x,c)$.
\end{proof}

Now we will establish the full asymptotic behaviour of the slow
manifold at infinity. First, we will extract as much information as
possible from the isoclines. Second, we will attempt to find a
series in integer powers of $x$. Third, we will prove definitively
that the resulting series is indeed fully correct.

Let ${c\in\rb{0,\vep^{-1}}}$. We know from
Proposition~\ref{prop0016} that
\[
    F(x,c) < \cM(x) < \alpha(x)
    \mymbox{for all}
    x > \xi(c),
\]
where $F$ is defined in \eqref{eq11.006} and $\xi$ is defined in
\eqref{eq11.010}. Note that
\[
    F(x,c) = \frac{x}{\vep} - \frac{K(c)}{\vep^2} + \bigO{\frac{1}{x}}
    \mymbox{and}
    \alpha(x) = \frac{x}{\vep} - \frac{1}{\vep\rb{1+\vep}} + \bigO{\frac{1}{x}}
    \mymbox{as}
    x \to \infty.
\]
Since
\[
    F(x,c) - \frac{x}{\vep} < \cM(x) - \frac{x}{\vep} < \alpha(x) - \frac{x}{\vep}
    \mymbox{for all}
    x > \xi(c),
\]
we can conclude
\[
    \liminf_{x\to\infty} \sqb{ \cM(x) - \frac{x}{\vep} } \geq -\frac{K(c)}{\vep^2}
    \mymbox{and}
    \limsup_{x\to\infty} \sqb{ \cM(x) - \frac{x}{\vep} } \leq -\frac{1}{\vep\rb{1+\vep}}.
\]
Since ${c\in\rb{0,\vep^{-1}}}$ is arbitrary and
\[
    \lim_{c\to\rb{\vep^{-1}}^-} -\frac{K(c)}{\vep^2} = -\frac{1}{\vep\rb{1+\vep}},
\]
we have
\[
    \lim_{x\to\infty} \sqb{ \cM(x) - \frac{x}{\vep} } = -\frac{1}{\vep\rb{1+\vep}}
    \mymbox{and}
    \cM(x) = \frac{x}{\vep} - \frac{1}{\vep\rb{1+\vep}} + \litO{1}
    \mymbox{as}
    x \to \infty.
\]

Assume that we can write
\begin{equation} \label{eq11.026}
    \cM(x) = \sum_{n=-1}^\infty \rho_n x^{-n}
\end{equation}
for undetermined coefficients $\seq{\rho}{n}{-1}{\infty}$. Of
course, we expect ${\rho_{-\!1}=\vep^{-1}}$ and ${\rho_0=-\vep^{-1}\rb{1+\vep}^{-1}}$. Now, write the differential
equation \eqref{eq11.002} as
\begin{equation} \label{eq11.027}
    \vep x y y' - x^2 y' - x^2 + y + \vep x y = 0.
\end{equation}
If we substitute \eqref{eq11.026} into \eqref{eq11.027} and solve
for the coefficients, we obtain
\begin{align}
    &\rho_{-\!1} = \frac{1}{\vep},
    \quad
    \rho_0 = -\frac{1}{\vep\rb{1+\vep}}, \notag\\
    \mymbox{and}
    &\rho_{n} = -\frac{1}{ 1 + \vep } \sqb{ \rho_{n-1} - \vep \sum_{m=1}^{n} \rb{n-m} \rho_{m-1} \rho_{n-m} }
    \mymbox{for}
    n \geq 1. \label{eq11.030}
\end{align}

\begin{proposition}
Asymptotically, the slow manifold can be written
\[
    \cM(x) \sim \sum_{n=-1}^\infty \rho_n x^{-n}
    \mymbox{as}
    x \to \infty,
\]
where the coefficients $\seq{\rho}{n}{-1}{\infty}$ are as in
\eqref{eq11.030}.
\end{proposition}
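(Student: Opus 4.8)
The plan is to establish that the formal series $\sum_{n=-1}^\infty \rho_n x^{-n}$ is the genuine asymptotic expansion of $\cM(x)$ at infinity by the standard technique of truncating the series, showing the truncation is an approximate solution, and then controlling the error with a fence/antifunnel argument. First I would fix $N\geq 0$ and set $S_N(x):=\sum_{n=-1}^N \rho_n x^{-n}$. By construction the coefficients in \eqref{eq11.030} are defined precisely so that substituting $S_N$ into the left-hand side of \eqref{eq11.027} produces a residual of size $\bigO{x^{-(N+1)}}$ (the first $N+2$ coefficients cancel identically); equivalently, writing $y'=f(x,y)$ as in \eqref{eq11.002}, one gets $S_N'(x)-f(x,S_N(x))=\bigO{x^{-(N+2)}}$ after dividing through by the denominator, which is $\sim -\vep x\,S_N \sim -x^2$. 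I would record this as a lemma.

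Next I would convert the residual estimate into fences. For a suitable constant $C_N>0$, the curves $\beta_N^\pm(x):=S_N(x)\pm C_N x^{-N}$ should be, respectively, a strong upper and lower fence for \eqref{eq11.002} on an interval $[x_N,\infty)$: indeed $\tfrac{d}{dx}\beta_N^\pm = S_N' \mp N C_N x^{-N-1}$, while $f(x,\beta_N^\pm(x)) = f(x,S_N(x)) + f_y(x,\xi)\big(\pm C_N x^{-N}\big)$, and from the computation in the proof of Theorem~\ref{c10t-001} we have $f_y(x,y)=\rb{x-\vep y}^{-2}>0$ with $x-\vep y \to 0$ along these curves, so $f_y$ is of order $x^{0}$ or larger near the vertical isocline — one must be a little careful here because $\cM$ approaches the vertical isocline, but since $\cM(x)-V(x)\to -\vep^{-2}(1+\vep)^{-1}\cdot(\dots)$ stays bounded away from the isocline at a fixed rate, $x-\vep\cM(x)$ is bounded below by a positive constant, hence $f_y$ along $\cM$ and its neighbours is $\bigO{1}$. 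Then the $f_y$-term contributes $\mp C_N\bigO{x^{-N}}$, which must be compared against the residual $\bigO{x^{-(N+2)}}$ and the fence-derivative correction $\mp N C_N x^{-N-1}$; choosing $C_N$ large enough and $x_N$ large enough makes the fence inequalities \eqref{eq-001} strict. This yields an antifunnel $\Gamma_N$ of width $2C_N x^{-N}$ around $S_N$.

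Then I would invoke the Antifunnel Theorem (Theorem~\ref{thmD.003}) with $r(x):=f_y$ evaluated appropriately — using $f_y(x,y)\geq \rb{x-\vep y}^{-2}>0$ bounded below by a positive constant $m_N>0$ on $\Gamma_N$, so $\int_{x_N}^x r\,ds \to \infty$ and $\big(\beta_N^+(x)-\beta_N^-(x)\big)\big/\expb{\int r} = 2C_N x^{-N} \expb{-m_N(x-x_N)} \to 0$ — to conclude there is a unique solution of \eqref{eq11.002} trapped in $\Gamma_N$. The final step is to identify that trapped solution with $\cM$: since $\Gamma_N \subset \Gamma_1$ for $x$ large (both $\beta_N^\pm$ lie strictly between $H$ and $V$ eventually, as they are asymptotic to the line $x/\vep$ from below just like $\cM$), and $\cM$ is the unique solution staying in $\Gamma_1$ for all large $x$ by Theorem~\ref{c10t-001}, the trapped solution must be $\cM$. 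Hence $|\cM(x)-S_N(x)| < C_N x^{-N}$ for $x\geq x_N$, i.e. $\cM(x) = \sum_{n=-1}^{N}\rho_n x^{-n} + \bigO{x^{-N}}$, and since $N$ is arbitrary this is exactly the assertion $\cM(x)\sim\sum_{n=-1}^\infty \rho_n x^{-n}$ as $x\to\infty$.

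The main obstacle I anticipate is the bookkeeping in the fence step: one must simultaneously control three competing error terms (the residual from the truncation, the $f_y$-times-perturbation term, and the derivative of the correction term $\pm C_N x^{-N}$) and verify that $f_y$ genuinely stays bounded — above and below — on the antifunnel, which hinges on the fact, already essentially available from Proposition~\ref{prop0019} and the limit $\cM(x)-x/\vep\to -\vep^{-1}(1+\vep)^{-1}$, that $\cM$ stays a fixed positive distance from the vertical isocline at infinity. Once that boundedness is in hand, the rest is the routine antifunnel machinery already used twice in the paper.
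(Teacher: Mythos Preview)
Your approach is viable but takes a genuinely different route from the paper. The paper transforms to coordinates $X=x^{-1}$, $Y=y-(\rho_{-1}x+\rho_0+\rho_1x^{-1})$, obtaining a polynomial system with a saddle-node fixed point at the origin (eigenvalues $0$ and $-(1+\vep)$), invokes the Centre Manifold Theorem to produce the asymptotic expansion of the $\Cr{\infty}{}$ centre manifold there, and then argues from the hyperbolic-sector structure of that fixed point that the centre manifold is the only solution which is $\litO{1}$ in the new coordinates, hence is $\cM$. Your direct fence construction bypasses the Centre Manifold Theorem entirely and stays within the antifunnel framework of \S\ref{sec004}; this is more elementary and self-contained, at the price of explicit residual bookkeeping.

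A few slips to repair, none fatal. First, the denominator $-x^2+\vep x S_N$ is not $\sim -x^2$: since $\vep x S_N=x^2+\vep\rho_0 x+\cdots$, the leading $x^2$ terms cancel and the denominator is $\sim\vep\rho_0 x=-x/(1+\vep)$---the very observation you use correctly a few lines later when noting $x-\vep\cM(x)\to 1/(1+\vep)$ to bound $f_y$. Your residual exponent is therefore off by one power of $x$, though the argument survives. Second, the fence labels are inverted relative to the paper's convention: since $f_y>0$, the upper curve $\beta_N^+$ satisfies $(\beta_N^+)'<f(x,\beta_N^+)$ and is a \emph{lower} fence (the $\alpha$ in Theorem~\ref{thmD.003}); correspondingly the $f_y$-contribution carries sign $\pm$, not $\mp$. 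Because that term dominates the residual for large $x$, any $C_N>0$ works---$C_N$ need not be large. Third, for the identification step use $\Gamma_N\subset\Gamma_0$ together with Theorem~\ref{c10t-001}(b): the inclusion $\Gamma_N\subset\Gamma_1$ you claim is more delicate than you suggest, since $\alpha$ and $S_N$ share the same expansion through order $x^{-1}$ and in fact $S_1(x)>\alpha(x)$ for large $x$.
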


\begin{proof}
To prove the result, we will apply the Centre Manifold Theorem to a
fixed point at infinity. Consider the change of variables
\[
    X := x^{-1}
    \mymbox{and}
    Y := y - r(x),
    \mymbox{where}
    r(x) := \rho_{-\!1} x + \rho_0 + \rho_1 x^{-1},
\]
with the coefficients $\rho_{-\!1}$, $\rho_0$, and $\rho_1$ being
given in \eqref{eq11.030}. Differentiate the new variables with
respect to time and use the differential equation \eqref{eq11.001}
to obtain the system
\begin{align*}
    \dot{X} &= -X^2 g_1\rb{ X^{-1}, r\rb{X^{-1}} + Y }, \\
    \quad
    \dot{Y} &= -r'\rb{X^{-1}} g_1\rb{ X^{-1}, r\rb{X^{-1}} + Y } + g_2\rb{ X^{-1}, r\rb{X^{-1}} + Y },
\end{align*}
where $g_1$ and $g_2$ are as in \eqref{eq10.009}.  This system is
not polynomial but there is no harm in considering the system
\begin{align} \label{eq11.031}
    \dot{X} &= -X^3 g_1\rb{ X^{-1}, r\rb{X^{-1}} + Y }, \notag\\
    \quad
    \dot{Y} &= X \sqb{ -r'\rb{X^{-1}} g_1\rb{ X^{-1}, r\rb{X^{-1}} + Y } + g_2\rb{ X^{-1}, r\rb{X^{-1}} + Y } },
\end{align}
which is polynomial. This is because the resulting scalar
differential equation will be the same. The system at hand, while
messy, is in the canonical form for the Centre Manifold Theorem.
Note that the eigenvalues of the matrix for the linear part of this
system are $0$ and $-(1+\vep)$. We know from centre manifold theory
that there is a $\Cr{\infty}{}$ centre manifold ${Y=\cC(X)}$ which,
we claim, must be the slow manifold.

For the scalar differential equation in the original coordinates,
all other solutions except for the slow manifold leave the
antifunnel $\Gamma_1$. To establish that the slow manifold in the
original coordinates is the same as the centre manifold in the new
coordinates, we need only show that ${Y=\cC(X)}$ is the only scalar
solution in the new coordinates which is $\litO{1}$ as ${X \to
0^+}$.

Observe that the $Y$-axis is invariant.  Moreover, the fixed point
${(X,Y)=(0,0)}$ is a saddle node (or a degenerate saddle).  The
physically relevant portion of the phase portrait, namely ${X \geq
0}$, consists of two hyperbolic sectors, one with the positive
$Y$-axis and the centre manifold as boundaries and the other with
the negative $Y$-axis and the centre manifold as boundaries. See
Figure~\ref{fig11.009}. This can be shown using techniques in \S9.21
of \cite{Andronov} (in particular Theorem~65 on page~340).
Therefore, $\cC(X)$ is the only scalar solution in the new
coordinates which is $\litO{1}$ as ${X \to 0^+}$. It follows that
the centre manifold is indeed the slow manifold.

\begin{figure}[t]
\begin{center}
    \includegraphics[width=\mediumfigwid]{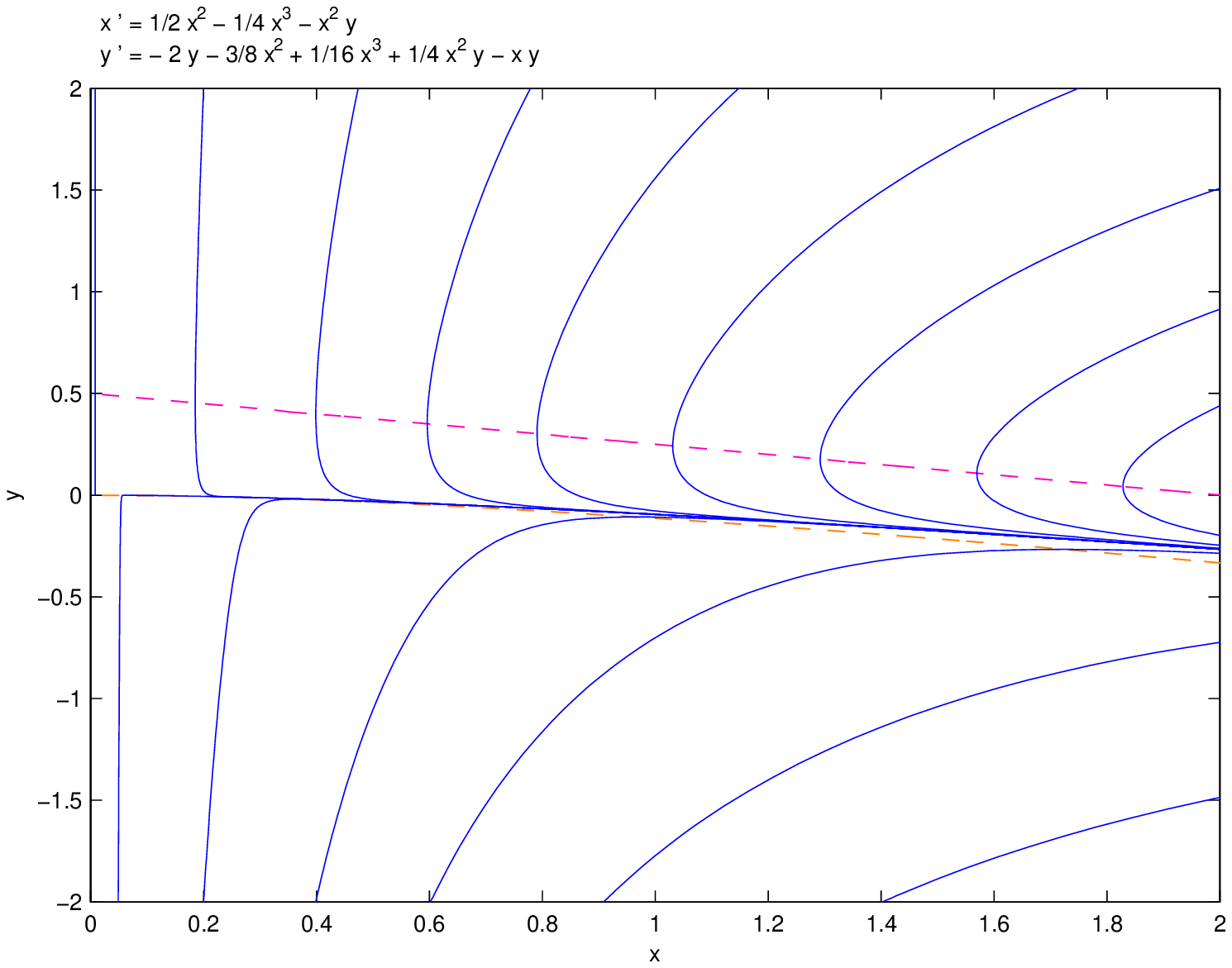}
    \caption[Fixed point at infinity]{A phase portrait for \eqref{eq11.031} for ${\vep=1.0}$.} \label{fig11.009}
\end{center}
\end{figure}

By the Centre Manifold Theorem, in the new coordinates the slow
manifold can be written
\[
    \cM(X) \sim \sum_{n=2}^\infty \hrho_n X^n
    \mymbox{as}
    X \to 0^+,
\]
for some coefficients $\seq{\hrho}{n}{2}{\infty}$.  Upon reverting
back to original coordinates and observing that the coefficients in
\eqref{eq11.030} are generated uniquely from the differential
equation, the conclusion follows.
\end{proof}

\begin{samepage}
\begin{corollary} \label{cor0006}
The slow manifold satisfies
\[
    \cM(x) = \alpha(x) + \bigO{\frac{1}{x^2}}
    \mymbox{as}
    x \to \infty.
\]
Moreover, this statement would not be true if we replace $\alpha(x)$
with any other isocline $F(x,c)$.
\end{corollary}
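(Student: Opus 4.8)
The plan is to compare the asymptotic expansions of $\cM(x)$, $\alpha(x)$, and a generic isocline $F(x,c)$ as $x\to\infty$, using the series representation for $\cM$ just established together with the explicit closed forms \eqref{eq11.006} and \eqref{eq11.009}. First I would expand $\alpha(x)$ in inverse powers of $x$: since $\alpha(x)=x^2/(K(\vep^{-1})+\vep x)$ with $K(\vep^{-1})=\vep/(1+\vep)$, a geometric-series expansion gives $\alpha(x)=\tfrac{x}{\vep}-\tfrac{1}{\vep(1+\vep)}+c_1 x^{-1}+\bigO{x^{-2}}$ for an explicit constant $c_1$ (namely $c_1=K(\vep^{-1})^2/\vep^3=1/[\vep(1+\vep)^2]$). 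Next I would use the proposition above, which gives $\cM(x)\sim\sum_{n=-1}^\infty\rho_n x^{-n}$ with $\rho_{-1}=\vep^{-1}$, $\rho_0=-1/[\vep(1+\vep)]$, and $\rho_1$ given by \eqref{eq11.030}; subtracting, $\cM(x)-\alpha(x)=(\rho_1-c_1)x^{-1}+\bigO{x^{-2}}$.

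The crux is then to verify the coefficient identity $\rho_1=c_1$, i.e.\ that the $x^{-1}$ terms of $\cM$ and $\alpha$ agree. This is a finite computation: evaluate the recursion \eqref{eq11.030} at $n=1$, which reads $\rho_1=-\tfrac{1}{1+\vep}[\rho_0-\vep\sum_{m=1}^{1}(1-m)\rho_{m-1}\rho_{1-m}]=-\tfrac{1}{1+\vep}\rho_0$ (the sum vanishes since its only term has factor $1-m=0$), so $\rho_1=1/[\vep(1+\vep)^2]$, which is exactly $c_1$. Hence $\cM(x)-\alpha(x)=\bigO{x^{-2}}$. For the second sentence of the corollary, I would take any other isocline $F(x,c)$, $c\ne\vep^{-1}$ (including the boundary cases $c=-1$ giving $y=0$ and the limiting vertical isocline), and expand: $F(x,c)=\tfrac{x}{\vep}-\tfrac{K(c)}{\vep^2}+\bigO{x^{-1}}$, so $\cM(x)-F(x,c)=\bigl[\tfrac{K(c)}{\vep^2}-\tfrac{1}{\vep(1+\vep)}\bigr]+\bigO{x^{-1}}$, and this constant is nonzero precisely when $K(c)\ne K(\vep^{-1})$, i.e.\ when $c\ne\vep^{-1}$; for the vertical isocline $y=x/\vep$ the constant term $-1/[\vep(1+\vep)]\ne0$ likewise. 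Thus no other isocline matches $\cM$ to $\bigO{x^{-2}}$, and in fact none matches it even to $\litO{1}$ except $\alpha$.

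I expect the only mildly delicate point to be bookkeeping: making sure the $\litO{1}$ and $\bigO{x^{-1}}$ remainders in the isocline expansions are genuinely of the claimed order (they are, since each $F(\cdot,c)$ and $\alpha$ is a rational function with a removable structure, so the expansion in $1/x$ is exact beyond the stated terms) and that the asymptotic series for $\cM$ from the proposition can be differenced term-by-term against these convergent expansions — which is legitimate because an asymptotic relation $\cM(x)\sim\sum\rho_n x^{-n}$ in particular yields $\cM(x)=\rho_{-1}x+\rho_0+\rho_1 x^{-1}+\bigO{x^{-2}}$. No new machinery is needed beyond the already-proven proposition and elementary expansions, so the proof is short; the substantive content is the coefficient coincidence $\rho_1=c_1$ forced by the recursion \eqref{eq11.030}.

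\begin{proof}
Expanding the rational functions \eqref{eq11.006} and \eqref{eq11.009} in powers of $x^{-1}$ gives, as $x\to\infty$,
\[
    \alpha(x) = \frac{x}{\vep} - \frac{1}{\vep\rb{1+\vep}} + \frac{1}{\vep\rb{1+\vep}^2}\,\frac{1}{x} + \bigO{\frac{1}{x^2}}
\]
and, for ${c\ne-1}$,
\[
    F(x,c) = \frac{x}{\vep} - \frac{K(c)}{\vep^2} + \bigO{\frac{1}{x}}.
\]
On the other hand, by the previous proposition, ${\cM(x)=\rho_{-\!1}x+\rho_0+\rho_1 x^{-1}+\bigO{x^{-2}}}$ as ${x\to\infty}$, with the coefficients $\rho_{-\!1},\rho_0,\rho_1$ as in \eqref{eq11.030}. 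Evaluating the recursion \eqref{eq11.030} at ${n=1}$, the sum ${\sum_{m=1}^{1}(1-m)\rho_{m-1}\rho_{1-m}}$ vanishes, so
\[
    \rho_1 = -\frac{1}{1+\vep}\,\rho_0 = \frac{1}{\vep\rb{1+\vep}^2}.
\]
Since also ${\rho_{-\!1}=\vep^{-1}}$ and ${\rho_0=-\vep^{-1}\rb{1+\vep}^{-1}}$, subtracting the expansion of $\alpha$ from that of $\cM$ yields ${\cM(x)-\alpha(x)=\bigO{x^{-2}}}$, which is the first assertion.

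For the second assertion, let ${F(x,c)}$ be any isocline other than $\alpha$; this includes the case ${c=-1}$ (the line ${y=0}$, for which ${\cM(x)-F(x,c)=\cM(x)\to\infty}$) and the limiting vertical isocline ${y=x/\vep}$, for which ${\cM(x)-x/\vep \to -\vep^{-1}(1+\vep)^{-1}\ne0}$. Otherwise ${c\ne-1}$ and ${c\ne\vep^{-1}}$, so ${K(c)\ne K\rb{\vep^{-1}}=\vep/(1+\vep)}$ and
\[
    \cM(x) - F(x,c) = \sqb{ \frac{K(c)}{\vep^2} - \frac{1}{\vep\rb{1+\vep}} } + \bigO{\frac{1}{x}} \to \frac{K(c) - K\rb{\vep^{-1}}}{\vep^2} \ne 0
    \mymbox{as}
    x \to \infty.
\]
Hence ${\cM(x)-F(x,c)}$ is not even $\litO{1}$, let alone $\bigO{x^{-2}}$, which establishes the final claim.
\end{proof}
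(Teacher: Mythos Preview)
Your proposal is correct and follows exactly the approach the paper indicates: the paper's proof is the single sentence ``It follows from a comparison of the asymptotic expansions for $\cM(x)$, $\alpha(x)$, and $F(x,c)$,'' and you have simply carried out that comparison in detail, including the verification that $\rho_1=1/[\vep(1+\vep)^2]$ matches the $x^{-1}$ coefficient of $\alpha$.
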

\end{samepage}

\begin{proof}
It follows from a comparison of the asymptotic expansions for
$\cM(x)$, $\alpha(x)$, and $F(x,c)$.
\end{proof}

\section{Open Questions} \label{sec009}

It would be nice to extend Proposition~\ref{prop0005} to include
more terms. In particular, it is desirable to have the lowest-order
term which depends on the initial condition. For $x(t)$, it is
expected that the initial condition $x_0$ first appears in the
$\fracslash{1}{t^2}$ term since this is the case when ${\vep=0}$,
which has
\[
    x(t) = \frac{x_0}{1+x_0t} \sim \sum_{n=1}^\infty \frac{ \rb{-1}^{n+1} }{ x_0^{n-1} t^n }
    \mymbox{as}
    t \to \infty.
\]
More generally, we would like to develop an iterative procedure to
extract as many terms as possible from the asymptotic expansion of a
solution of a nonlinear differential equation which approaches a
degenerate critical point in the direction of a centre manifold.

\section*{Acknowledgements} \label{sec010}

\addcontentsline{toc}{section}{Acknowledgements}

This paper is primarily based on the majority of Part~III of
\cite{CalderThesis}, which is one of the authors' (Calder) Ph.D.
thesis written under the supervision of the other author (Siegel).
Moreover, this paper parallels the authors' paper
\cite{CalderSiegel01}, which dealt with the Michaelis-Menten
mechanism, in many ways with a number of crucial differences in the
methods and details. Notable topics covered in this paper and not
the former paper include a detailed proof of global asymptotic
stability, nested antifunnels, the construction of the curve of
inflection points, and the leading-order behaviour of planar
solutions as time tends to infinity.

\end{document}